\documentclass[11pt,letterpaper,reqno]{amsart}
\ifdefined\pdfinfoomitdate\pdfinfoomitdate=1\fi
\ifdefined\pdftrailerid\pdftrailerid{}\fi
\ifdefined\pdfsuppressptexinfo\pdfsuppressptexinfo=-1\fi
\usepackage[T1]{fontenc}
\usepackage[utf8]{inputenc}
\usepackage{lmodern}
\usepackage{microtype}
\usepackage{amsmath,amssymb}
\usepackage{mathtools}
\usepackage[letterpaper,margin=1in]{geometry}
\usepackage{graphicx}
\usepackage[hidelinks,unicode]{hyperref}
\makeatletter
\renewcommand\subsection{\@startsection{subsection}{2}%
  \z@{.5\linespacing\@plus.7\linespacing}{.25\linespacing}%
  {\normalfont\bfseries}}
\makeatother

\theoremstyle{plain}
\newtheorem{theorem}{Theorem}[section]
\newtheorem{lemma}[theorem]{Lemma}
\newtheorem{proposition}[theorem]{Proposition}

\newtheorem{conjecture}{Conjecture}

\theoremstyle{remark}
\newtheorem{remark}[theorem]{Remark}

\newcommand{\C}{\mathbb C}
\newcommand{\R}{\mathbb R}
\newcommand{\D}{\mathbb D}
\newcommand{\PP}{\mathbb P}
\newcommand{\cA}{\mathcal A}
\newcommand{\cO}{\mathcal O}
\newcommand{\Ric}{\operatorname{Ric}}
\newcommand{\tr}{\operatorname{tr}}
\newcommand{\Id}{\operatorname{Id}}
\newcommand{\dist}{\operatorname{dist}}
\newcommand{\Area}{\operatorname{Area}}
\newcommand{\Acrit}{A_*}
\newcommand{\HS}{\mathrm{HS}}
\newcommand{\op}{\mathrm{op}}
\newcommand{\norm}[1]{\left\lVert#1\right\rVert}
\newcommand{\abs}[1]{\left\lvert#1\right\rvert}

\title{Analytic Construction of Rational Curves\\
on Fano Manifolds}
\author{Yun-Heng Du}
\author{Bin Guo}
\author{Song-Yan Xie}
\date{\today}
\subjclass[2020]{Primary 14J45, 32Q15; Secondary 14M22, 32Q10, 32Q45}
\keywords{Fano manifold, rational curve, rational connectedness,
holomorphic disk, Ricci curvature}

\hypersetup{
  pdftitle={Analytic Construction of Rational Curves on Fano Manifolds},
  pdfauthor={Yun-Heng Du, Bin Guo, Song-Yan Xie}
}

\begin{document}

\begin{abstract}
Inspired by constructions of entire curves in Oka geometry, we construct
rational curves on complex Fano manifolds analytically, by alternately
deforming a holomorphic disk to reduce its area and enlarging its
source. Positive Ricci curvature yields an area-decreasing deformation,
while an affine-lift perturbation enlarges the source at a controlled
area cost. The two operations change the area and the weighted
derivative by amounts we estimate explicitly, and analytic compactness
passes from the resulting disks to a holomorphic sphere. The main result
produces a sphere that meets a prescribed compact fiber without being
contained in it, with an explicit area bound.
\end{abstract}
\maketitle
\pagestyle{plain}
\setcounter{tocdepth}{1}
\tableofcontents

\section{Introduction and main results}
\label{sec:introduction}

\subsection{The analytic construction problem}

We construct rational curves on complex Fano manifolds by
explicit analytic operations, with control of the intermediate disks.
A Fano manifold $X$ is connected, smooth, and compact, with ample
anticanonical line bundle $K_X^{-1}$.

Rational curves are central to Mori's solution~\cite{Mori1979} of
Hartshorne's conjecture on ample tangent bundles. In his monograph
\emph{Ample subvarieties of algebraic varieties}, Hartshorne~\cite{Hartshorne1970} asked whether a smooth complex projective variety
with ample tangent bundle must be projective space. Mori first reduces
the problem to characteristic $p>0$, where the Frobenius map is the
key tool in his construction of rational curves. He then shows that
rational curves also exist on the original complex manifold.

On the analytic side, Siu and Yau~\cite{SiuYau1980} used
energy-minimizing harmonic spheres in their 1980 proof of the Frankel
conjecture. Their argument uses positive holomorphic bisectional
curvature, a stronger condition than the Ricci positivity that Yau's
prescribed-Ricci theorem provides on every Fano manifold~\cite{Yau1978}.
In 1982, Yau asked whether any two points of a compact K\"ahler manifold
with positive Ricci curvature could be joined by a chain of rational curves~\cite[p.~44]{Yau1982Survey}. Rational connectedness of Fano manifolds
was subsequently established by Campana and by Koll\'ar, Miyaoka, and
Mori \cite{Campana1992,KMMFano1992}. An analytic construction remained
a separate challenge, which Siu later proposed to approach through
complex Monge--Amp\`ere equations, without reduction to positive
characteristic~\cite{Siu2009}.

Our approach is based on methods for constructing entire
curves in Oka geometry, in particular those of Xie~\cite{Xie2024Nevanlinna}
(see also~\cite{ChenFornaessXie2025,HuynhXie2021,WuXie2025}). These methods build entire curves
by successively modifying holomorphic disks and enlarging their domains.
Guo and Xie~\cite[Section~5]{GuoXie2024} suggested pursuing such
constructions on Fano manifolds with a slow-growth condition, with the
aim of producing both entire and rational curves analytically.
The construction below realizes this idea in the Fano setting.

A nonconstant entire curve of finite area gives a rational curve by
removal of the singularity at infinity (Theorem~\ref{thm:removal}).
We therefore try to construct such an entire curve as a limit of
holomorphic disks with radii tending to infinity. Brody's lemma in
complex hyperbolicity~\cite{Brody1978} suggests normalizing the derivative at the origin and
obtaining convergence from derivative bounds on compact subsets. In our
setting the total area must also remain uniformly bounded.
Positive Ricci curvature provides the area-decreasing deformations
used for this purpose. Derivative concentration may prevent direct
convergence, so the spheres arising through bubbling are analyzed
separately, retaining their required incidence properties.

To carry out these deformations, we need room to vary holomorphic disks
in the target. In the Fano setting, the following folklore conjecture
in Oka geometry points to such flexibility; see~\cite[p.~660]{Forstneric2025HPrinciple}.

\begin{conjecture}
\label{conj:fano-oka}
Every Fano manifold is Oka.
\end{conjecture}

Our recent theorem that rationally connected smooth projective manifolds
are Oka--1~\cite[Corollary~1.8]{DuGuoXieOka1} gives a first step toward
the conjecture, since it includes every Fano manifold. The successive
approximation and enlargement of source domains used in that work
suggest that there is considerable flexibility in deforming holomorphic
disks. This viewpoint motivates the alternating deformation and
enlargement procedure developed below. Although the Oka--1 theorem
assumes rational connectedness, it is used here only as motivation and
plays no role in the proof. Known cases of the Oka conjecture include smooth
quadric~\cite{Grauert1958} and cubic~\cite{KalimanZaidenberg2025} Fano
hypersurfaces. Guo and Xie have recently proved that every smooth quartic
hypersurface of dimension at least three in projective space is
Oka~\cite{GuoXieQuartic2026};
see also~\cite{XieZhaoOkaK3}.
For background on Oka and Oka--1 theory, we refer to
\cite{Forstneric2017Book,AlarconForstneric2025}.

The construction is reminiscent of gathering and sealing a baozi: the disks
are alternately deformed and enlarged, while the uniform area bound eventually
allows the limiting entire curve to be compactified at infinity.

Area and location in the limit require separate treatment, since
holomorphic approximation alone supplies no uniform area bound as the
source radii tend to infinity, while classical Brody reparametrization
moves the source centers~\cite{Brody1978}: even disks through a fixed
point need not yield an entire curve through that point. Duval
addressed this loss of location by replacing derivative selection with
control of where area accumulates~\cite{Duval2008}; see
also~\cite{Duval2021}. For a compact set charged by an Ahlfors current,
his theorem produces an entire curve whose intersection with that set
has positive area. That localization viewpoint also guides the
construction below, but in our finite-area setting we retain position
through a base constraint and two marked values instead of invoking
the Ahlfors-current theorem itself.

Even when the source center is fixed, bubbling can separate its
prescribed local data from the nonconstant sphere components; the
analysis of bubbling goes back to Sacks and Uhlenbeck's work on
harmonic maps~\cite{SacksUhlenbeck1981} and Gromov's theory of
pseudoholomorphic curves~\cite{Gromov1985}. Since area may also
concentrate near the source boundary, we must obtain a nonconstant
sphere that satisfies both the required incidence conditions and the
area bound.

\subsection{Main results}

For rational connectedness, we need a rational curve that meets a fiber
of the rational quotient without remaining inside it. The main theorem
therefore treats a compact fiber of a holomorphic map $q$ defined on
an open neighborhood $N$. Here $F=q^{-1}(y_0)$ denotes the fiber of $q$
over $y_0$ in $N$, including all its components. We use the Ricci and
area conventions in \eqref{eq:normalization}.

\begin{theorem}
\label{thm:bounded-fiber-escape}
Let $X$ be a complex Fano manifold of dimension $n$, equipped with a
K\"ahler metric $\omega$ satisfying
\[
\Ric(\omega)\geq\kappa\omega,
\qquad \kappa>0.
\]
For an open subset $N\subset X$ and a complex manifold $Y$ of positive
dimension, consider a holomorphic map $q:N\to Y$ and a point $y_0\in Y$.
Suppose that the fiber
$F\coloneqq q^{-1}(y_0)$ is nonempty and compact and that $q$ is a submersion on a
neighborhood of $F$ in $N$. Then there is a nonconstant
holomorphic map $u:\PP^1\to X$ such that
\[
 u(\PP^1)\cap F\neq\varnothing,
 \qquad u(\PP^1)\not\subset F,
 \qquad
 \int_{\PP^1}u^*\omega\leq\frac{2\pi(n+1)}{\kappa}.
\]
\end{theorem}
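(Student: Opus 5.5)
The plan is to realize the sphere as a limit of holomorphic disks $f_k:\D_{R_k}\to X$ with $R_k\to\infty$. Each disk keeps its center attached to $F$ and picks up a definite displacement transverse to $F$. Throughout, the area stays below the threshold $\Acrit\coloneqq 2\pi(n+1)/\kappa$.

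\textbf{Starting disk.} I would begin with a small disk $f_0:\D_{R_0}\to N$ satisfying $f_0(0)\in F$ and $f_0'(0)$ transverse to $\ker dq$. Such a disk exists because $q$ is a submersion near $F$ and $\dim Y>0$. Its weighted derivative at the origin is normalized in the Brody sense. I also fix a second marked point $z_1$ with $q(f_0(z_1))\neq y_0$. Together with the base constraint $f(0)\in F$, this gives the two marked values mentioned in the introduction.

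\textbf{Alternation.} The iteration has two steps.
\begin{itemize}
\item[(a)] \emph{Area-decreasing step.} Along a variation field produced from $\Ric(\omega)\geq\kappa\omega$, the first variation of area is negative as long as the area exceeds $\Acrit$. Concretely, I would use holomorphic sections of $f^*TX$ whose existence comes from $\deg f^*K_X^{-1}$ being large relative to the boundary terms, via a Riemann--Roch/Bochner count. The factor $n+1$ should come from balancing $\int f^*\Ric\geq\kappa\Area$ against the $2\pi$ times Euler characteristic contribution of the disk and the $n$-dimensional fiber of sections.
\item[(b)] \emph{Affine-lift enlargement.} This step enlarges $\D_R$ to $\D_{R'}$ at an area cost bounded by $\varepsilon_k$, with $\sum\varepsilon_k<\infty$. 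It preserves $f(0)\in F$ and keeps the marked value near $z_1$ off $F$, with a lower bound on $\dist(f(z_1),F)$.
\end{itemize}
These steps are to be interleaved so that $\Area(f_k)\leq\Acrit+\delta_k$ with $\delta_k\to0$, while the weighted derivative at $0$ stays bounded below.

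\textbf{Passage to the limit.} I would apply the compactness results stated earlier, namely Gromov/Sacks--Uhlenbeck bubbling together with the removal of singularities Theorem~\ref{thm:removal}. A subsequence converges, modulo bubbling, to a tree of holomorphic spheres plus possibly a principal entire curve. The principal curve has finite area, so it extends to a sphere. The total area of the limit is at most $\liminf\Area(f_k)\leq\Acrit$, so every component satisfies the area bound. It remains to pick one component meeting $F$ and not contained in $F$. The image of the limit configuration is connected, being a Gromov limit of connected images. It contains the limit of $f_k(0)$, which lies in $F$ because $F$ is compact, and it contains the limit of $f_k(z_1)$, which lies off $F$. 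Hence some nonconstant component meets $F$ and is not contained in $F$. Constant components are points in the connected chain, so this argument passes through them. Area concentrating near the source boundary causes no harm: it only adds components or is lost, and the area bound survives either way.

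\textbf{Main obstacle.} I expect the hardest step to be (a): producing an area-decreasing holomorphic deformation that respects the constraint $f(0)\in F$ and keeps the second marked value off $F$, with the sharp constant $2\pi(n+1)/\kappa$. My first attempt would be to use sections of $f^*TX$ vanishing at $0$ modulo $T F$, that is, sections tangent to $F$ at the center, and to absorb the cost of that constraint in the $+1$ of $n+1$. A secondary difficulty is making sure the limit is nonconstant. The fixed distance between $f_k(0)\in F$ and $f_k(z_1)$ handles this, since it prevents total collapse, so the limit cannot be a single point.
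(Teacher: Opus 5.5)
Your proposal has a genuine gap at step (a), and a second one in keeping the iteration going. On a disk $f^*T_X$ is holomorphically trivial, so there is no degree of $f^*K_X^{-1}$ for a Riemann--Roch count. The quantity playing that role is $\int_{\D_R}f^*\Ric(\omega)$, and it never appears in the first variation of area: for a holomorphic family the first variation is a boundary term with no curvature and no sign. The paper gets the curvature from the \emph{paired second} variation. The K\"ahler condition gives $H_{\R}+H_{i\R}=\int_{\partial\D_R}\partial_\nu|\xi|_\omega^2\,ds$ (Lemma~\ref{lem:levi}). Summing over the columns of $e\Phi$, for a boundary-unitary Wiener--Masani frame $e$, gives $4\int_{\D_R}\|\Phi'\|_{\HS}^2\,dA-2\int_{\D_R}f^*\Ric(\omega)$ (Proposition~\ref{prop:matrixtrace}). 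One then takes a real parameter axis with negative second derivative and chooses the sign that makes the first-order term nonpositive. The $n+1$ does not come from tangency to $F$ at the center. The multiplier $U_f\operatorname{diag}((z/R)^2,z/R,\ldots,z/R)$ fixes the center exactly, at cost $\pi n$. Its double zero in the $\lambda$-direction also fixes the scalar derivative $\lambda(f_z(0))=1$, at extra cost $\pi$. That exact normalization is what the later steps run on. Turning the trace into a step of definite size needs the uniform lifting of Theorems~\ref{thm:uniform-lift} and~\ref{thm:engine}, whose constants depend on caps for both the area and $B_R(f)$.

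That dependence is where the second gap lies. Normalization forces $B_R(f)\ge R/\ell_F$, so no fixed derivative cap survives enlargement. $B_R$ can also blow up at bounded radius through concentration. The step sizes then shrink and the radii may converge to a finite limit. The bubbles formed there may miss $F$ or lie inside a fiber of $\psi$, so they need not give the required sphere. Nothing in your outline guarantees $R_k\to\infty$. The paper runs finite stages until $B\ge M$. When the radii stay bounded while $B\to\infty$, Lemma~\ref{eff:lem:loss-recovery} gives either the required sphere or a normalized principal map whose area has dropped by at least $\hbar_X$. That map is repaired and restarted, and the area quantum bounds the nesting depth by $\lfloor C/\hbar_X\rfloor+1$ (Proposition~\ref{eff:prop:hierarchy}).

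Likewise, nothing in your step (a) keeps $\dist(f(z_1),F)$ bounded below, since the descent moves $f(z_1)$ freely. The paper tracks no second point and instead produces one afterward. By Cauchy's estimate for $\eta\circ\psi\circ f$, every normalized disk of radius greater than $\rho=1+\sup_{\overline V}|\eta\circ\psi|$ leaves $V$ inside $\D_\rho$. Rescaling at the first exit gives $g_j(0)\in F$ and $g_j(1)\in\partial V$. Because the descent constants degenerate as the margin shrinks, the paper runs a separate construction for each cap $\Acrit+1/j$ rather than one interleaved sequence with $\delta_k\to0$. It then applies Lemma~\ref{lem:two-marked-expanding-chain}. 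Your final connectedness argument for selecting a component is sound once such marked disks exist.
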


Every Fano manifold admits such a K\"ahler metric by Yau's
prescribed-Ricci theorem~\cite{Yau1978}, as recalled in
Proposition~\ref{prop:fano-metric}; the metric need not be
K\"ahler--Einstein. Since $X$ is projective, the reduced image of the
sphere in the theorem is a rational curve.

This is an explicit metric area bound depending on the chosen Ricci
lower bound $\kappa$.

The relative viewpoint has a classical antecedent in Campana's
generic-fiber escape criterion and his Fano theorem~\cite[Proposition~2.7 and Theorem~3.1]{Campana1992}.
We work near a specified compact fiber of a holomorphic
submersion and obtain a metric area bound.

During the construction, the center is allowed to move within $F$.
If compactness has not already produced a sphere meeting $F$ but not
contained in $F$, this freedom allows the normalized base derivative
to pass to the principal map. The final limit used for the explicit area
bound retains the two incidence properties, though not necessarily the
derivative normalization; hence the theorem prescribes neither a
tangent direction nor a contact order.

Choose a holomorphic coordinate map $q$ on a neighborhood of $x$.
Its fiber over $q(x)$ is $F=\{x\}$, so the relative theorem gives the
following pointwise statement.

\begin{theorem}
\label{thm:intro-pointwise-fano}
Let $X$ be a complex Fano manifold of dimension $n\geq1$, equipped
with a K\"ahler metric $\omega$ satisfying
$\Ric(\omega)\geq\kappa\omega$ for some $\kappa>0$. Then, for every $x\in X$, there is a nonconstant holomorphic
map $u_x:\PP^1\to X$ such that
\[
 x\in u_x(\PP^1),\qquad
 \int_{\PP^1}u_x^*\omega\leq\frac{2\pi(n+1)}{\kappa}.
\]
\end{theorem}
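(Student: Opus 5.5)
We deduce Theorem~\ref{thm:intro-pointwise-fano} directly from Theorem~\ref{thm:bounded-fiber-escape}, applied with $Y=\C^n$ and a coordinate chart as the map $q$.

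Fix $x\in X$. Choose a holomorphic coordinate chart $\varphi:N\to\C^n$ on an open neighborhood $N$ of $x$, and set $q\coloneqq\varphi$, $Y\coloneqq\C^n$ and $y_0\coloneqq\varphi(x)$. Since $n\geq1$, the manifold $Y$ has positive dimension. Because $\varphi$ is a biholomorphism onto its image, the fiber is
\[
F=q^{-1}(y_0)=\{x\}.
\]
This set is nonempty and compact. The differential of $q$ is an isomorphism at every point of $N$, so $q$ is a submersion on all of $N$, and in particular on a neighborhood of $F$. The metric $\omega$ satisfies $\Ric(\omega)\geq\kappa\omega$ with $\kappa>0$ by assumption. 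All hypotheses of Theorem~\ref{thm:bounded-fiber-escape} therefore hold.

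Theorem~\ref{thm:bounded-fiber-escape} then provides a nonconstant holomorphic map $u:\PP^1\to X$ with
\[
u(\PP^1)\cap\{x\}\neq\varnothing
\qquad\text{and}\qquad
\int_{\PP^1}u^*\omega\leq\frac{2\pi(n+1)}{\kappa}.
\]
The first condition says exactly that $x\in u(\PP^1)$. We set $u_x\coloneqq u$. The second conclusion of Theorem~\ref{thm:bounded-fiber-escape}, namely $u(\PP^1)\not\subset\{x\}$, is not needed separately: it already follows from $u$ being nonconstant.

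No step here is a real obstacle. The only points to check are that a chart map is a submersion with a single-point fiber, and that $\dim Y=n\geq1$ meets the positive-dimension requirement on $Y$. All the analytic difficulty lies in Theorem~\ref{thm:bounded-fiber-escape} itself.
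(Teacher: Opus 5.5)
Your proposal is correct and is the paper's argument: apply Theorem~\ref{thm:bounded-fiber-escape} to a holomorphic coordinate chart $q:N\to Y\subset\C^n$ about $x$. This chart is a submersion with fiber $F=\{x\}$ over $q(x)$, so the resulting sphere passes through $x$ and satisfies the stated area bound. Your additional checks, that $\dim Y=n\geq1$ and that $u(\PP^1)\not\subset\{x\}$ follows from nonconstancy, are accurate.
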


Classical bend-and-break produces a rational curve $C$ through a
prescribed point of a Fano $n$-fold with
\[
 0<-K_X\cdot C\leq n+1.
\]
Mori's account~\cite[p.~749]{MoriICM1984} attributes this form of the
result to Koll\'ar, by a technique similar to that of~\cite{Mori1979}.
Integration over $C$ gives
\[
 \kappa\int_C\omega\le\int_C\Ric(\omega)=2\pi(-K_X\cdot C).
\]
The classical result controls the anticanonical degree, giving more
information than the area bound for a fixed metric.
The analytic construction controls area and weighted derivatives during
the process.
For the classical argument, see~\cite[Chapter~3]{Debarre2001}.

\subsection{The disk operations and their quantitative control}

For a holomorphic map $f:\D_R\to X$, write
\[
\Area_R(f)=\int_{\D_R}f^*\omega,\qquad
B_R(f)=\sup_{z\in\D_R}(R-|z|)|f_z(z)|_\omega.
\]
A bound $B_R(f)\le B_0$ gives
$|f_z(z)|_\omega\le B_0/(R-|z|)$, and hence a derivative bound on every
smaller disk. The quantity $B_R(f)$ is central to Brody's original proof
of the reparametrization lemma~\cite[Lemma~2.1]{Brody1978}: a point where
the weighted derivative is nearly maximal supplies the center and scale
for a map on a larger disk with controlled derivatives on compact
subdisks. We use bounded values of $B$ to choose uniform surgery
parameters, and sequences with $B$ tending to infinity to extract
bubbles by point selection.

In the relative construction, we allow the disk center to move within
$F$ while fixing one nonzero scalar component of the derivative of
$q\circ f$ there.

The first surgery lowers area while fixing the center and one scalar
component of the derivative. Its geometric starting point is a paired
second variation. For a disk extending holomorphically across its
boundary, the variation fields are holomorphic sections $\xi$ of
$f^*T_X$, smooth on the closed disk. The K\"ahler condition identifies
the sum of the second area derivatives along the real and imaginary
parameter axes with the boundary quadratic form
\[
 \mathcal H_{R,f}(\xi)\coloneqq
 \int_{\partial\D_R}\partial_\nu|\xi|_\omega^2\,ds
\]
whenever $\xi$ is realized by a holomorphic family
(Lemma~\ref{lem:levi}), where $\nu$ is the outward Euclidean unit
normal and $ds$ is arclength. The paired variation therefore depends
only on the field along the disk.

In the proof of Myers's theorem~\cite{Myers1941}, Ricci curvature
enters by tracing second variations built from a parallel orthonormal
frame along a geodesic~\cite[\S19, Theorem~19.4]{Milnor1963}; for
holomorphic disks the analogous trace calculation uses a holomorphic
frame $e=(e_1,\ldots,e_n)$ of $f^*T_X$ that is smooth on the closed
disk and unitary on its boundary. Such a frame is supplied by
Wiener--Masani spectral factorization~\cite{WienerMasani1957}, in the
form used for Hermitian holomorphic bundles by Berndtsson and
Rosay~\cite[Theorem~2.1 and Proposition~2.3]{BerndtssonRosay2003}:
their curvature-potential estimate, combined with the Green-potential
bound below, controls the normalized frame and its inverse uniformly
under fixed area and weighted-derivative bounds. These bounds feed the
quantitative holomorphic realization of the variation fields; boundary
unitarity is a normalization of the frame and imposes no geometric
hypothesis on $X$.

Let $G$ be the Gram matrix of this frame, with entries
$G_{jk}=\langle e_j,e_k\rangle_\omega$. Since $G=\Id_n$ on the boundary, $\partial_\nu\tr G=\partial_\nu\log\det G$ there.
Green's formula and the determinant-curvature identity therefore give,
before imposing any constraint at the center,
\[
 \sum_{a=1}^n\mathcal H_{R,f}(e_a)
 =\int_{\partial\D_R}\partial_\nu\log\det G\,ds
 =-2\int_{\D_R}f^*\Ric(\omega).
\]
Ricci curvature enters the deformation of a holomorphic disk through
this identity; see \eqref{eq:matrixtrace} with $\Phi=\Id_n$. Imposing the
interpolation at the center adds a Dirichlet cost to it, and the scalar
constraint means keeping one chosen component of
$(q\circ f)_z(0)$ fixed. Write this component as $\lambda(f_z(0))$,
where $\lambda\in T^*_{f(0)}X$ is the corresponding nonzero covector.
A constant unitary change of frame then places
$e_2(0),\ldots,e_n(0)$ in $\ker\lambda$. Vanishing order at least two in the first component and
at least one in the others then suffices to preserve the center and
the specified scalar derivative in the affine families below.
Taking monomials of the lowest admissible degrees $(2,1,\ldots,1)$,
normalized on $|z|=R$, gives Dirichlet cost $\pi(n+1)$ and paired trace
\[
 4\pi(n+1)-2\int_{\D_R}f^*\Ric(\omega)
 \le4\pi(n+1)-2\kappa\Area_R(f).
\]
It follows that, above $\Acrit=2\pi(n+1)/\kappa$, the boundary quadratic form is
negative on at least one of these fields. The realization below turns
this negativity into area decrease. The calculation after
\eqref{eq:scalartrace} proves that this Dirichlet cost is the least
possible under the prescribed interpolation and boundary conditions.

Other choices of field are available: using $(z/R)^m$ in every
direction, for a fixed integer $m\ge2$, gives paired trace
$4\pi mn-2\int_{\D_R}f^*\Ric(\omega)$ and threshold $2\pi mn/\kappa$.
The perturbation then vanishes to order $m$ and preserves the entire
$(m-1)$-jet at the center, while $m=2$ preserves the first jet, at
threshold $4\pi n/\kappa$. These statements concern the disk
deformations only, and not the jets of the final sphere.

To obtain uniform deformations from this calculation, we lift the disk
and its fields to a fixed affine Stein manifold lying over $X$.
Quantitative corona estimates~\cite{Nicolau1990} control the lifts;
Euclidean perturbation followed by a holomorphic neighborhood
retraction \cite{DocquierGrauert1960,Forstneric2010Stratified} and
projection to $X$ gives actual holomorphic families. Radial
regularization makes the fields extend across the source boundary and
preserves the center constraints exactly. For the resulting families,
the sum of the second area derivatives along the real and imaginary
parameter axes differs from the trace calculated above by an error
that can be made arbitrarily small. Fix area and weighted-derivative bounds
$A_0,B_0$ and require $\Area_R(f)\ge\Acrit+\delta$ for fixed
$\delta>0$. This sum remains negative for the realized families,
so at least one real parameter direction
has negative second variation. We choose its sign to make the
first-order contribution nonpositive. A common parameter interval and
cubic remainder now give a positive step $t_*$ and an area decrease
$\sigma>0$, uniform in the disk and its radius. These constants
depend on the fixed bounds and the margin $\delta$; see
Proposition~\ref{cor:compact-fiber-descent}. This uniform decrease
provides room for the area cost of enlargement.

For the second surgery, choose a holomorphic lift $H$ of $f$ into an
affine space and a holomorphic map $\Pi$ back to $X$, defined near the
lifted image, with $\Pi\circ H=f$. For sufficiently small $t>0$, set
\[
 h_t(z)=\Pi(H(z)+tzH'(0)),\qquad
 G_t(z)=h_t(z/(1+t)).
\]
The affine perturbation fixes the center and multiplies $f_z(0)$ by
$1+t$. Rescaling the source then restores the original derivative
and increases the source radius from $R$ to $(1+t)R$.
On these respective source disks, both the area and the squared
weighted derivative change by at most a uniform constant times $t$
under fixed area and weighted-derivative bounds
(Proposition~\ref{prop:finite-area-enlargement}).

Alternating enlargement and area reduction keeps the area below a
fixed cap, with quantitative control of each finite stage in
Proposition~\ref{eff:prop:finite-stage}. If derivatives concentrate
while the source radii remain bounded, compactness either produces
the required sphere or retains a normalized principal map. In the
latter case, a bubble accounts for a definite area loss, and we
enlarge the principal map before continuing. The recovery argument
in Section~\ref{eff:sec:nested-recursion} organizes these returns
and yields the required sphere or disks of arbitrarily large radius;
this passage still uses subsequential limits.

We obtain the stated bound by letting the area cap decrease to $\Acrit$.
During this passage, we mark one point mapping into $F$ and another
mapping outside a fixed neighborhood of $F$. The limit may have several
sphere components; the two markings allow us to select one that meets
$F$ without being contained in it, with area at most $\Acrit$.
The compactness argument is given in Section~\ref{sec:area-descent},
using the theorem of Ivashkovich and
Shevchishin~\cite{IvashkovichShevchishin2000}.

The controlled holomorphic deformations developed here also suggest
an approach to Conjecture~\ref{conj:fano-oka}, in which deformations of
disks would lead to dominating holomorphic sprays on Fano manifolds.

\par\smallskip\noindent{\bfseries Plan of the paper.}\par\nobreak
\smallskip\noindent
Section~\ref{sec:spread-sphere-quotient} proves the relative sphere theorem
and records the value of the bound for the Fubini--Study metric. The Ricci-trace calculation,
uniform holomorphic realization, and marked compactness are established
in Sections~\ref{sec:trace-lifting}--\ref{sec:area-descent}.
The final section recalls how the construction combines with
classical algebraic arguments to recover rational connectedness and
Mori's ample-tangent characterization of projective space.

\section{Explicit construction of a relative sphere}
\label{sec:spread-sphere-quotient}

\subsection{Normalized disks and the two operations}
We fix a complex Fano manifold $X$ of dimension $n\geq1$ and a
K\"ahler metric $\omega$ satisfying $\Ric(\omega)\geq\kappa\omega$
for some $\kappa>0$ throughout the construction.
In holomorphic coordinates $w^1,\ldots,w^n$, set
$h_{\alpha\bar\beta}=\langle\partial/\partial w^\alpha,
\partial/\partial w^\beta\rangle_\omega$ for the pairwise inner products
of the coordinate vector fields; their Gram matrix is
$h=(h_{\alpha\bar\beta})$.
Our Hermitian inner products are complex-linear in the first variable and
conjugate-linear in the second~\cite[Ch.~V, \S12]{DemaillyCADG2012}.
Coefficient columns $a,b$ pair as
$a^{\mathsf T}h\bar b$, where $\mathsf T$ denotes ordinary transpose. For a holomorphic source map $f$, write
$f_z=df(\partial/\partial z)$. With repeated coordinate indices summed and
$dA$ denoting Euclidean area measure, our normalizations are

\begin{equation}
 \omega=\frac{i}{2}h_{\alpha\bar\beta}\,dw^\alpha\wedge d\bar w^\beta,
 \qquad
 f^*\omega=\abs{f_z}_h^2\,dA,
 \qquad
 \Ric(\omega)=-i\partial\bar\partial\log\det h.
 \label{eq:normalization}
\end{equation}
For a holomorphic map $f:S\to X$ from a Riemann surface, set
$\Area(f)\coloneqq\int_S f^*\omega$; this counts geometric area with
the multiplicity of the map. A holomorphic sphere is a map
$u:\PP^1\to X$. When $u$ is nonconstant, we call its reduced image
a rational curve, while $\Area(u)$ continues to include the mapping
multiplicity.
For $a\in\C$ and $r>0$, write
$\D_r(a)\coloneqq\{z\in\C:|z-a|<r\}$, and abbreviate
$\D_R\coloneqq\D_R(0)$ and $\D\coloneqq\D_1(0)$.
We write $\overline\D_R=\{z\in\C:|z|\le R\}$ for the closed disk and
$\partial\D_R=\{z\in\C:|z|=R\}$ for its boundary circle. We use the area on $\D_R$ and the boundary-distance-weighted derivative bound
\begin{equation}
 \Area_R(f)\coloneqq \int_{\D_R}f^*\omega,
 \qquad
 B_R(f)\coloneqq \sup_{z\in\D_R}(R-\abs z)\abs{f_z(z)}_\omega.
 \label{eq:area-brody}
\end{equation}

Write $\cO(V,X)$ for the holomorphic maps from an open set
$V\subset\C$ to $X$, and $K\Subset U$ when the closure of $K$ is
compact and contained in $U$. Smooth convergence means uniform
convergence of all derivatives on compact source subsets.

For this fixed Ricci lower bound, set
\begin{equation}
 \Acrit = \frac{2\pi(n+1)}\kappa.
 \label{eq:ricci-threshold}
\end{equation}

The Fano manifold $X$ is projective. We fix an embedding
$\iota:X\hookrightarrow\PP^{m-1}$ to carry out the two disk operations. It
remains fixed as the map and source radius vary, and the constants
may depend on it. All areas and target derivative norms are still
measured using $\omega$.

For the relative constraint, start with $q:N\to Y$, $y_0$, and
the compact fiber $F=q^{-1}(y_0)$ in
Theorem~\ref{thm:bounded-fiber-escape}. Choose a smooth-boundary
neighborhood $U$ with $F\Subset U\Subset N$, small enough that $q$ is a
submersion on a neighborhood of $\overline U$. Since $F=q^{-1}(y_0)\Subset U$, one has
$y_0\notin q(\partial U)$, and $q(U)$ is an open neighborhood of
$y_0$. We can therefore choose a coordinate ball $Y'$
about $y_0$ such that $\overline{Y'}\cap q(\partial U)=\varnothing$ and
$Y'\subset q(U)$. After replacing $N$ by $U\cap q^{-1}(Y')$ and $Y$ by
$Y'$, the map $q$ is a proper holomorphic submersion: it remains a
submersion by the choice of $U$, and the inverse image of a compact
subset of $Y'$ is closed in $\overline U$ and avoids $\partial U$.
In the restricted coordinate ball write $d_Y=\dim Y$ and let
$\psi:N\to\C^{d_Y}$ be the coordinate expression of $q$, translated so that
$\psi^{-1}(0)=F$. Fix $0\neq\eta\in(\C^{d_Y})^*$ and set
\[
 \lambda^F=(\lambda_x^F)_{x\in F},\qquad
 \lambda_x^F\coloneqq \eta\circ d\psi_x\in T_x^*X.
\]
The covectors $\lambda_x^F$ are the differentials of the same scalar
function $\eta\circ\psi$ at points $x\in F$, so even when $f(0)$
moves in $F$, normalization always means
$(\eta\circ\psi\circ f)_z(0)=1$, with the composition defined near
zero. It follows that $d\psi_{f(0)}(f_z(0))\neq0$, so the initial
tangent vector is not tangent to $F$. The submersion property makes
$\lambda_x^F$ nonzero for every $x\in F$.
The classes below consist of maps on $\D_R$ that extend
holomorphically to some open neighborhood of $\overline\D_R$. The
neighborhood may depend on the map. Set

\begin{equation}\label{eq:moving-root-class}
 \cA_R^{F,\lambda^F}
 =\left\{f\in\cO(\D_R,X):
 f(0)\in F,\quad\lambda^F_{f(0)}(f_z(0))=1\right\}.
\end{equation}
We call a holomorphic disk \emph{normalized} if its center lies in $F$
and its base scalar derivative equals $1$. Membership in
$\cA_R^{F,\lambda^F}$ requires, in addition, holomorphic extension
across the source boundary. In the iteration, $A$ and $B$ always refer
to the current map and its current source radius.

Choose $x_0\in F$ and a holomorphic coordinate chart $\chi$ with
$\chi(x_0)=0$. Since $\lambda^F_{x_0}$ is nonzero, choose $v\in\C^n$
such that $\lambda^F_{x_0}(d(\chi^{-1})_0(v))=1$. For all sufficiently
small $R_0>0$, the map $f_0(z)=\chi^{-1}(zv)$ is holomorphic
near $\overline\D_{R_0}$ and belongs to
$\cA_{R_0}^{F,\lambda^F}$. Moreover,
$\Area_{R_0}(f_0)\to0$ as $R_0\downarrow0$. Fix one such
$R_0$ with $\Area_{R_0}(f_0)<\Acrit$; the pair
$(f_0,R_0)$ is the initial disk for the iteration.

To relate the source radius to the weighted derivative, let
\[
 \ell_F\coloneqq\max_{x\in F}\|\lambda_x^F\|_{\omega,*},
\]
the maximal dual norm of the covector field $\lambda^F$ along $F$.
Here $\|\cdot\|_{\omega,*}$ is the dual norm induced by $\omega$.
Since $F$ is compact and $\lambda_x^F\ne0$ on $F$, we have
$0<\ell_F<\infty$. For every normalized disk, the scalar condition gives
$1=|\lambda_{f(0)}^F(f_z(0))|\le\ell_F|f_z(0)|_\omega$.
Evaluating the supremum defining $B_R(f)$ at $z=0$, we obtain
\begin{equation}\label{eq:radius-progress}
 B_R(f)\ge R|f_z(0)|_\omega\ge R/\ell_F.
\end{equation}
Each surgery keeps the value at $0$ and the normalized scalar derivative
fixed. When a limiting disk is retained for the next stage, its value
at $0$ is still in $F$, but may be a different point of $F$; its scalar
derivative is still $1$. The example in
Section~\ref{subsec:marked-incidence} illustrates this with
$F=\PP^1\times\{[0:1]\}$. The new maps need not agree with
their predecessors on the original source disks: enlargement is
achieved by deformation, not by analytic continuation of a single map.
Fix also $F\Subset V\Subset N$ with smooth boundary.

Fix $c>0$ with $\omega\ge c\iota^*\omega_{\rm FS}$, normalizing a
projective line to have Fubini--Study area $\pi$, and set
\begin{equation}\label{eff:eq:energy-quantum}
 \hbar_X=c\pi>0.
\end{equation}
If $u:\PP^1\to X$ is nonconstant, the projective map $\iota\circ u$
has an integer degree $d\geq1$. Hence
\[
 \Area(u)\geq c\int_{\PP^1}u^*\iota^*\omega_{\rm FS}
 =c\pi d\geq \hbar_X.
\]

The two operations play complementary roles. Surgery I acts on a fixed
source disk and lowers its area above the Ricci threshold, using the
K\"ahler variation identity of Lemma~\ref{lem:levi}. Surgery II enlarges
the source at a linear area cost. The next proposition makes the first
operation uniform and records its effect on $B_R$.

\begin{proposition}\label{cor:compact-fiber-descent}
In the relative setting fixed above, let $A_0,B_0,\delta>0$.
There are constants $\sigma>0$ and $\beta_I<\infty$, depending only on
these three bounds and the fixed relative data, with the following
property. For every $R>0$, each
$f\in\cA_R^{F,\lambda^F}$ with
\[
 \Acrit+\delta\le\Area_R(f)\le A_0,\qquad B_R(f)\le B_0
\]
admits a map $g\in\cA_R^{F,\lambda^F}$ satisfying
\begin{equation}\label{eq:quantitative-descent}
 g(0)=f(0),\qquad
 \Area_R(g)\le\Area_R(f)-\sigma,\qquad
 |B_R(g)^2-B_R(f)^2|\le\beta_I.
\end{equation}
\end{proposition}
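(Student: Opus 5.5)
We follow the outline in the introduction: a Ricci-trace computation on a normalized unitary frame, a quantitative holomorphic realization of the resulting variation fields, and a uniform second-order Taylor argument in one real parameter.

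\emph{Step 1: frame and trace.} Given $f\in\cA_R^{F,\lambda^F}$ with $\Area_R(f)\le A_0$ and $B_R(f)\le B_0$, we first rescale the source to the unit disk. This is harmless because $\Area$ and $B$ are invariant under $z\mapsto Rz$, and it is how the constants become independent of $R$. The map then extends holomorphically across $\partial\D$. We take the Berndtsson--Rosay spectral factorization frame $e$ of $f^*T_X$, unitary on $\partial\D$. Their curvature-potential estimate, combined with the bounds $A_0,B_0$, controls $e$ and $e^{-1}$ uniformly on $\overline\D$ in $C^0$, and we expect $C^k$ control on interior disks. A constant unitary change puts $e_2(0),\dots,e_n(0)$ in $\ker\lambda^F_{f(0)}$. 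We then form the fields $\xi_1=z^2e_1$ and $\xi_a=ze_a$ for $a\ge2$. The determinant identity and Green's formula give
\[
\sum_a\mathcal H(\xi_a)=4\pi(n+1)-2\int f^*\Ric(\omega)\le-2\kappa\delta .
\]
Hence some $\xi=\xi_a$ satisfies $\mathcal H(\xi)\le-2\kappa\delta/n$. Moreover, each $\xi_a$ preserves $f(0)$ and the scalar $\lambda(f_z(0))$ to first order.

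\emph{Step 2: realization.} Next we lift $f$ and $\xi$ to the fixed affine Stein manifold over $X$, with corona (Nicolau) bounds depending only on $A_0,B_0$. We perturb affinely, $H+s\tilde\xi$, then apply the holomorphic retraction and project to $X$. To make the family extend past $\overline\D$, we radially regularize, replacing $f$ by $f(rz)$ with $r$ slightly below $1$. This costs an arbitrarily small error in $\mathcal H$, controlled uniformly by $B_0$. One must check that the vanishing orders $(2,1,\dots,1)$ of the lifted perturbation keep $f_s(0)=f(0)\in F$ and $\lambda(\partial_z f_s(0))=1$ exactly, and not merely to first order; we handle this by arranging the interpolation at the lift level, where the constraint becomes linear. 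Lemma~\ref{lem:levi} then says that the sum of the second derivatives of $\Area(f_s)$ along the real and imaginary axes of $s$ is at most $-\kappa\delta/n$. So one real direction $s=\pm t\in\R$ or $s=\pm it$ has second derivative at most $-\kappa\delta/(2n)$. We choose its sign so that the first derivative is nonpositive.

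\emph{Step 3: uniform Taylor bound and the $B$ estimate.} All third derivatives in $t$ of $\Area(f_t)$ are bounded on a common interval $|t|\le t_0$ by a constant depending on $A_0,B_0,\delta$ and the fixed embedding. This follows from the uniform lift and frame bounds and from the fixed retraction. Taking $t_*$ small yields
\[
\Area(g)\le\Area(f)-\sigma,\qquad \sigma\approx\frac{\kappa\delta t_*^2}{8n}.
\]
For the weighted derivative, $g-f$ is a controlled perturbation: $|g_z-f_z|$ is bounded by a constant times the sup of $|\tilde\xi|$ and its derivative in the lifted coordinates. Near the boundary, the weight $(1-|z|)$ absorbs the derivative growth, which is controlled via Cauchy estimates on the extended domain, or else by the regularization radius. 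This gives $|B(g)^2-B(f)^2|\le\beta_I$.

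\emph{Main obstacle.} We expect the hard part to be the uniformity in Step 2. The frame and the lifts must be controlled up to the boundary, where only the weighted derivative bound $B_0$ is available, and the realization error in $\mathcal H$ must be made smaller than $\kappa\delta/n$ independently of $f$. Our first attempt would be to perform all estimates on $\D_{r}$ with $r<1$ fixed in terms of $\delta$. There the derivatives are bounded by $B_0/(1-r)$. We would use the area lost in the annulus $r<|z|<1$, which is small via a compactness-by-contradiction argument, to absorb the discrepancy.
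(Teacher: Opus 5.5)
Your outline follows the paper's proof closely: the same multiplier with degrees $(2,1,\dots,1)$, the same trace bound, the lift to the fixed affine model with Nicolau bounds, the choice of one real axis and sign, and the same $t_*$ and $\sigma=\frac{\kappa\delta}{8n}t_*^2$. However, your regularization step and your handling of uniformity do not work as written.

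\emph{The regularization breaks the normalization.} Replacing $f$ by $f(rz)$ changes the central map. Its scalar derivative at the origin becomes $r\,\lambda(f_z(0))=r$. Since your perturbation preserves the value and the scalar derivative at $0$, every member of the family, and in particular $g$, has $\lambda_{g(0)}(g_z(0))=r\neq1$, so $g\notin\cA_R^{F,\lambda^F}$. This contradicts the exact interpolation you want. The step is also unnecessary: $f$ already extends past $\overline\D_R$ by the definition of the class, and what fails to extend is the lifted frame. The paper keeps the central lift $\widetilde f$ unchanged; it is made extendable by $\widehat\chi=\chi_\rho/(\chi_\rho\cdot\mathbf s)$, which leaves $\mathbf s$ and hence $f$ fixed. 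Only the lifted frame inside the increment is dilated: $\Xi_\rho(z)=\theta(\rho z)\Phi(z)$. Because $\Phi(0)=0$, the ambient point at $z=0$ is $\widetilde f(0)$ for every $t$. Hence exactly $(f_{a,t})_z(0)=f_z(0)+t\,e(0)\Phi_a'(0)$, and $\lambda\bigl(e(0)\Phi_a'(0)\bigr)=0$ keeps the scalar equal to $1$.

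\emph{You place the uniformity in the wrong spot.} The trace error need not be small uniformly in $f$. It also cannot be ``controlled uniformly by $B_0$'': bounded area and $B$ give no control of boundary derivatives, as the example $h_r$ at the start of Section~\ref{subsec:realization-estimates} shows. The paper chooses $\rho_{\mathrm{tr}}$ separately for each disk with $\varepsilon_{\mathrm{tr}}=\kappa\delta$. This already yields $H_{a,\tau}\le-\kappa\delta/(2n)$ on some axis for every admissible $f$. Only $t_0$ and $C_T$ must be uniform, and they are. They depend only on $\sup\|\theta\|_{\op}\le L_0\sqrt\Gamma$, on the Dirichlet bounds $D_{\widetilde f},D_\theta,D_\Phi$, and on $C^3$ bounds of $\widetilde\pi_X^*\omega$ on a fixed compact tube. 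The dilation $\theta(\rho\cdot)$ does not increase any of these, and the third $s$-derivative of the area density is bounded by $C(|\widetilde f'|^2+|\Xi'|^2)$, so no boundary control is needed.

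\emph{Your proposed remedy would fail.} Bounds on area and $B$ do not make the area of $\{r<|z|<1\}$ small. In a chart, $z\mapsto c\,k^{-1/2}z^k$ has area comparable to $\pi c^2$ and $B\to0$, while its area on $\D_r$ tends to zero. So no compactness argument can give uniform smallness. Working on $\D_r$ would also discard exactly the Ricci integral needed to beat $4\pi(n+1)$.

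\emph{The $B$ estimate needs interior estimates.} $\beta_I$ cannot come from Cauchy estimates on the extension or from the regularization radius, since neither is uniform in $f$. The paper uses the interior bound $(R-|z|)|F'(z)|\le\sqrt{E/\pi}$ of Lemma~\ref{eff:lem:weighted-euclidean} together with the Dirichlet bounds. Through Proposition~\ref{eff:prop:joint-estimate} this gives $\beta_I=C_{A,I}t_*/\pi$.
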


The last estimate gives $B_R(g)\le(B_0^2+\beta_I)^{1/2}$, so
Surgery I controls $B_R$ without necessarily decreasing it. The map
$g$ is explicit: it belongs to the affine-lift family of
Theorem~\ref{thm:engine}, and \eqref{eq:descent-step} records its step
size and guaranteed area decrease.

The second surgery perturbs a lifted disk in affine space, projects
back to $X$, and dilates the source.

\begin{proposition}\label{prop:finite-area-enlargement}
Let $(X,\omega)$ be a compact projective manifold with a smooth
Hermitian metric, and fix an embedding
$\iota:X\hookrightarrow\PP^{m-1}$. Fix bounds $A_0,B_0\geq0$.
There are constants $0<\bar t\leq1$ and $C_{II}<\infty$, independent
of the radius and the individual map, with the following properties.
\begin{enumerate}
\renewcommand{\labelenumi}{\textup{(\roman{enumi})}}
\item Let $R>0$ and let $h$ be a map to $X$, holomorphic on a
neighborhood of $\overline\D_R$, with
$\Area_R(h)\leq A_0$ and $B_R(h)\leq B_0$. There are a holomorphic
lift $H:U\to\C^{2m}$ on a neighborhood $U$ of $\overline\D_R$ and a
holomorphic ambient projection $\Pi$ defined near
$H(\overline\D_R)$, with $\Pi\circ H=h$ near $\overline\D_R$, such
that the formulas
\begin{equation}\label{eq:explicit-enlargement}
 h_t(z)=\Pi(H(z)+tzH'(0)),\qquad
 G_t(z)=h_t(z/(1+t)),\qquad 0<t\leq\bar t,
\end{equation}
give holomorphic maps $h_t:\D_R\to X$ and
$G_t:\D_{(1+t)R}\to X$ that extend past their respective closed disks.
The map $G_t$ satisfies
\begin{align}
 |\Area_{(1+t)R}(G_t)-\Area_R(h)|&\leq C_{II}t,
 \label{eff:eq:II-A}\\
 |B_{(1+t)R}(G_t)^2-B_R(h)^2|&\leq C_{II}t/\pi.
 \label{eff:eq:II-B}
\end{align}
The same estimates compare $h_t$ and $h$ on $\D_R$, and
$(h_t)_z(0)=(1+t)h_z(0)$, while $G_t(0)=h(0)$ and
$(G_t)_z(0)=h_z(0)$.

\item Let $h$ be holomorphic only on the finite open disk $\D_R$, with
the same area and derivative bounds. For each $0<t\leq\bar t$, let
$r=(1+t/2)R/(1+t)$ and apply part~\textup{(i)} to $h|_{\D_r}$.
If $H_r$ is the resulting lift, then
\begin{equation}\label{eff:eq:closed-repair}
 R'=(1+t)r=(1+t/2)R,\qquad
 g(z)=\Pi\left(H_r\left(\frac{z}{1+t}\right)
       +t\frac{z}{1+t}H_r'(0)\right)\quad (z\in\D_{R'})
\end{equation}
extends holomorphically past $\overline\D_{R'}$. Since $r<R<R'$, its
source is larger than that of $h$, and
\begin{equation}\label{eff:eq:closed-repair-cost}
 \Area_{R'}(g)\leq\Area_R(h)+C_{II}t,\qquad
 B_{R'}(g)^2\leq B_R(h)^2+C_{II}t/\pi.
\end{equation}
Moreover, $g(0)=h(0)$ and $g_z(0)=h_z(0)$.
\end{enumerate}
\end{proposition}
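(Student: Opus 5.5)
The plan is to build the lift $H$ from the fixed embedding $\iota$ by a uniform construction, and then read off the area and weighted-derivative changes from the fact that $h_t$ is a $C^1$-small perturbation of $h$ in a controlled sense.

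\textbf{The lift and the projection.} Write $\iota\circ h=[s_1:\cdots:s_m]$, with $s=(s_1,\ldots,s_m)$ a holomorphic, nowhere-vanishing $\C^m$-valued map on a neighborhood of $\overline\D_R$. Such an $s$ exists because line bundles on disks are trivial. By a quantitative corona-type normalization (Nicolau), I choose $s$ and a companion $\tilde s$ with $\langle s,\tilde s\rangle=1$ and $\|s\|,\|\tilde s\|$ bounded, so that $H=(s,\tilde s)\in\C^{2m}$. The affine manifold $\{(a,b):\langle a,b\rangle=1,\ [a]\in\iota(X)\}$ is Stein and lies over $X$. I take $\Pi$ to be a holomorphic retraction of a neighborhood of it (Docquier--Grauert) followed by $(a,b)\mapsto[a]$. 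This retraction is fixed once and for all, with uniform derivative bounds on a uniform tubular neighborhood of a compact piece. The uniformity should come from scale invariance: rescaling $z\mapsto Rz$ leaves $B$ and $\Area$ unchanged, so I may take $R=1$. The bound $B_1(h)\le B_0$ then gives $|H'(0)|\le C(A_0,B_0)$. A bounded $H$ gives a $\bar t$ such that $H(z)+tzH'(0)$ stays in the domain of $\Pi$ for $|z|\le1$ and $t\le\bar t$. The identities $(h_t)_z(0)=(1+t)h_z(0)$, $G_t(0)=h(0)$ and $(G_t)_z(0)=h_z(0)$ follow from the chain rule, since $\Pi\circ H=h$.

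\textbf{Area estimate.} The main obstacle is the area estimate~\eqref{eff:eq:II-A}, because $B_1(h)\le B_0$ gives no uniform bound on $|h_z|$ near $\partial\D_1$. I write $(h_t)_z=d\Pi_{H+tzH'(0)}(H'+tH'(0))$. Two facts control it. First, $d\Pi$ changes by $O(t)$ between the base points $H$ and $H+tzH'(0)$. Second, $d\Pi_H(H')=h_z$. Together these give
\[
|(h_t)_z|^2\le(1+Ct)\bigl(|h_z|+Ct\bigr)^2,
\]
up to a term proportional to $t|H'|$, and that term is the delicate one. I would use that the lift can be arranged so that $|H'|\le C|h_z|+C$, via a frame for the normal directions bounded by the corona constants. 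Integrating over $\D_1$ then gives
\[
\Area_1(h_t)\le\Area_1(h)+Ct\bigl(A_0+\sqrt{A_0}+1\bigr).
\]
The reverse inequality is symmetric. The identity $\Area_{(1+t)}(G_t)=\Area_1(h_t)$ holds by conformal invariance.

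\textbf{Weighted-derivative estimate.} For~\eqref{eff:eq:II-B} I use the same pointwise bound, multiplied by $(1-|z|)^2$. This gives
\[
(1-|z|)^2|(h_t)_z|^2\le(1-|z|)^2|h_z|^2+Ct(B_0^2+B_0+1).
\]
The supremum then changes by $O(t)$. For $G_t$ the weight $(1+t)-|w|$ equals $(1+t)\bigl(1-|z|\bigr)$ with $z=w/(1+t)$, and the derivative scales by $(1+t)^{-1}$. The weights therefore match exactly, and the $O(t)$ bound transfers to $G_t$.

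\textbf{Part (ii).} I apply part (i) to $h|_{\D_r}$, which is holomorphic near $\overline\D_r$. Monotonicity gives $\Area_r\le\Area_R$, and $B_r(h)\le B_R(h)$ because $r-|z|\le R-|z|$. So part (i) applies with the same constants, and $g=G_t$ for $h|_{\D_r}$ on $\D_{(1+t)r}=\D_{R'}$. The estimates then chain through $\Area_r(h)\le\Area_R(h)$ and $B_r(h)^2\le B_R(h)^2$.
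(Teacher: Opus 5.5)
Your architecture matches the paper's. The shared pieces are a fixed affine model over $X$ built from homogeneous coordinates and a corona companion, a fixed holomorphic retraction composed with the projection, the perturbation $tzH'(0)$, the exact dilation identities for $G_t$, and the reduction of (ii) to (i) by restricting to $\D_r$. The interpolation identities and part (ii) are fine.

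The gap is exactly at the step you flag as delicate. Since $\Pi^*\omega$ degenerates along the fibres of $\mathcal Y\to X$, moving the base point by $tzH'(0)$ changes $d\Pi$ by $O(t)$. That change is applied to the full ambient derivative $H'$, not to $h_z$. Integrating the resulting terms $t|h_z||H'|$ and $t|H'|^2$ therefore needs a bound on $\int_{\D_R}|H'|^2\,dA$ that is uniform in $R$ and $h$. Sup bounds on $(s,\tilde s)$ do not give this.

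The pointwise bound $|H'|\le C|h_z|+C$ is not something a corona construction provides, and it fails for the natural lift. For the homogeneous lift normalized by $|s|=1$ on $\partial\D$, one has $|s'|^2=|s|^2\bigl(|\partial_z\log|s|^2|^2+\varepsilon^{\rm FS}_h\bigr)$. Here $\partial_z\log|s|^2$ is the derivative of the Green potential of the area density, which is a nonlocal quantity. Take $h=[((z-z_0)/d)^2:1]$ into $\PP^1$ with $1-|z_0|=d$. Its area is at most $2\pi$ and $B_1(h)$ is bounded independently of $d$. Yet $\partial_z\log|s|^2$ is of order $1/d$ at the critical point $z_0$, where $h_z=0$.

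What you need is the uniform Dirichlet bound $\int_{\D_R}|H'|^2\,dA\le D(A_0,B_0)$, and this is where the paper's work goes.
\begin{itemize}
\item The boundary-normalized homogeneous lift satisfies $\int_{\D_R}|\mathbf s'|^2\,dA=\int_{\D_R}h^*\iota^*\omega_{\rm FS}\le c_\iota A_0$ by Green's formula.
\item It also satisfies $|\mathbf s|\ge\delta_{\mathbf s}(A_0,B_0)$ by the Green-potential estimate; this is where $B_0$ enters.
\item Nicolau's theorem is used specifically in the Dirichlet algebra $H^\infty\cap\mathcal D$, so the companion satisfies $\|\chi\|_\infty+\|\chi'\|_{L^2}\le C_\chi$.
\item A radial dilation of $\chi$, followed by division, makes the lift extend past $\overline\D_R$. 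Your proposal does not address this extension.
\end{itemize}

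With $\int|H'|^2\le D$ in hand, the density identity and Cauchy--Schwarz give the area estimate. The submean inequality gives $(R-|z|)|H'(z)|\le\sqrt{D/\pi}$ and $R|H'(0)|\le\sqrt{D/\pi}$. These replace your pointwise bound in the $B^2$ estimate, where Cauchy estimates for a bounded $H$ would also suffice. Note also that $|H'(0)|$ is controlled by $\|H\|_\infty$ or by $D$, not by $B_0$, which only bounds $|h_z(0)|$.
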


Propositions~\ref{cor:compact-fiber-descent}
and~\ref{prop:finite-area-enlargement} are proved in
Section~\ref{subsec:disk-operations}.

In \eqref{eq:explicit-enlargement}, the affine perturbation changes the
derivative on the fixed source disk, whereas the subsequent dilation enlarges
the source and restores the original derivative without changing the perturbed
image or its area.

\subsection{Principal limits and marked values}

We first construct a principal map in the original source coordinate,
then a rational chain retaining two marked values. If the principal map
is entire and has finite area, removal at infinity extends it to a
sphere. We derive these statements from the compactness and removal
results of Ivashkovich and
Shevchishin~\cite{IvashkovichShevchishin2000}.
Proposition~\ref{prop:principal-compactness},
Theorem~\ref{thm:removal}, and
Lemma~\ref{lem:two-marked-expanding-chain} are proved in
Section~\ref{sec:area-descent}.

\begin{proposition}
\label{prop:principal-compactness}
Let $(X,\omega)$ be a compact Hermitian manifold. For each $j$, let
$R_j>0$ and let
$f_j:U_j\to X$ be holomorphic, where $U_j\subset\C$ is open and
$\overline\D_{R_j}\Subset U_j$. Assume that $\sup_j\Area_{R_j}(f_j)<\infty$,
and set $A_\infty=\liminf_j\Area_{R_j}(f_j)$. Choose $R_\infty\in(0,\infty]$
with $R_\infty\leq\liminf_jR_j$, interpreting $\D_\infty$ as $\C$.
After passing to a subsequence realizing $A_\infty$ and then to a further
subsequence, there are a finite set
$\Sigma_\infty\subset\D_{R_\infty}$ and a holomorphic map
$f_\infty:\D_{R_\infty}\to X$ such that, in the original source coordinate,
\[
 f_j\longrightarrow f_\infty
 \quad\text{smoothly on compact subsets of }
 \D_{R_\infty}\setminus\Sigma_\infty,
 \qquad \Area_{R_\infty}(f_\infty)\leq A_\infty.
\]
The possibly constant map $f_\infty$ is holomorphic across
$\Sigma_\infty$.
\end{proposition}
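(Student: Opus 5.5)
The plan is the standard energy-concentration (Gromov/Sacks--Uhlenbeck) compactness argument, run on a fixed exhaustion of $\D_{R_\infty}$.

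\textbf{Exhaustion and subsequence.} Fix radii $r_k\uparrow R_\infty$, with $r_k<R_\infty$ (so $r_k=k$ when $R_\infty=\infty$). Since $R_\infty\le\liminf_j R_j$, for each $k$ the maps $f_j$ are defined on a neighborhood of $\overline\D_{r_k}$ for all large $j$. First pass to a subsequence along which $\Area_{R_j}(f_j)\to A_\infty$.

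\textbf{Bubble set.} For each $k$, consider the area measures $\mu_j=f_j^*\omega|_{\D_{r_{k}}}$. Their total masses are bounded by $A:=\sup_j\Area_{R_j}(f_j)$, so after a diagonal subsequence they converge weakly to a measure $\mu$ on $\D_{R_\infty}$. Define $\Sigma_\infty$ as the set of points $p$ with $\mu(\{p\})\ge\hbar$, where $\hbar>0$ is the small-energy constant of the $\varepsilon$-regularity lemma for holomorphic curves in the compact Hermitian manifold $(X,\omega)$. This is the standard result of Ivashkovich--Shevchishin/Gromov: if $\Area_{\D_\rho(p)}(f)<\hbar$ then $|df|\le C\rho^{-1}$ on $\D_{\rho/2}(p)$. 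The set $\Sigma_\infty$ is finite, with at most $A/\hbar$ points. At first it is only known to be finite in each $\D_{r_k}$, but the bound $A/\hbar$ is uniform, so $\Sigma_\infty$ is finite globally.

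\textbf{Convergence off $\Sigma_\infty$.} Every point of $\D_{R_\infty}\setminus\Sigma_\infty$ has a disk on which $\mu_j$ eventually has mass below $\hbar$. The $\varepsilon$-regularity lemma then gives uniform $C^1$ bounds there. Elliptic bootstrapping, via the Cauchy estimates in local holomorphic charts of $X$ once the images are small, upgrades these to $C^\infty$ bounds. Arzel\`a--Ascoli and a further diagonal subsequence then yield smooth convergence on compact subsets of $\D_{R_\infty}\setminus\Sigma_\infty$ to a holomorphic $f_\infty$.

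\textbf{Area bound.} For a compact set $K\subset\D_{R_\infty}\setminus\Sigma_\infty$, smooth convergence gives
\[
\int_Kf_\infty^*\omega=\lim_j\int_Kf_j^*\omega\le\liminf_j\Area_{R_j}(f_j)=A_\infty,
\]
because $K\subset\D_{R_j}$ for large $j$. Exhausting $\D_{R_\infty}\setminus\Sigma_\infty$ by such sets and using that $\Sigma_\infty$ has measure zero gives $\Area_{R_\infty}(f_\infty)\le A_\infty$.

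\textbf{Removal across $\Sigma_\infty$.} This is the main obstacle. Around each $p\in\Sigma_\infty$, $f_\infty$ is a holomorphic map on a punctured disk with finite area, and $X$ is only Hermitian. Here I would invoke the removable-singularity theorem of Ivashkovich--Shevchishin (the same ingredient as Theorem~\ref{thm:removal}) to extend $f_\infty$ holomorphically across $p$.

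To make this self-contained, the planned argument is as follows. Finite area forces the area on annuli $\{\epsilon<|z-p|<2\epsilon\}$ to tend to zero. Conformal invariance of area, together with $\varepsilon$-regularity on rescaled annuli, then gives $|z-p|\,|df_\infty|\to0$. Hence the images of small circles around $p$ have diameter tending to zero. A monotonicity or isoperimetric argument in a Hermitian chart then shows that $f_\infty$ has a limit at $p$, and the Riemann extension theorem in a chart finishes the proof.

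The genuine subtlety is the non-K\"ahler case, where the monotonicity constant depends on $d\omega$. This is exactly why I would rely on \cite{IvashkovichShevchishin2000} rather than reprove it. The map $f_\infty$ may be constant; nothing in the argument requires otherwise.
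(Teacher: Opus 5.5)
Your proposal is correct and follows essentially the same route as the paper. It takes a weak limit of the area measures and defines the finite concentration set $\Sigma_\infty$ by an atom threshold coming from the Ivashkovich--Shevchishin small-energy lemma. Off $\Sigma_\infty$ it gets local convergence, upgraded to smooth convergence by Cauchy estimates in target charts; it then proves the area bound by exhaustion and removes the punctures with the finite-area removal theorem (Theorem~\ref{thm:removal}). The only difference is cosmetic: the paper extracts strong local $W^{1,p}$ convergence and applies Sobolev embedding, while you use a pointwise gradient bound with Arzel\`a--Ascoli, and the two are equivalent here because the target complex structure is integrable.
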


The extension across $\Sigma_\infty$ does not assert convergence of
$f_j$ at the concentration points.

\begin{theorem}\label{thm:removal}
Let $(X,\omega)$ be a compact Hermitian manifold. Every holomorphic map
from a punctured disk to $X$ with finite $\omega$-area extends across
the puncture. Every finite-area holomorphic map $u:\C\to X$ extends to
a holomorphic map $\PP^1\to X$.
\end{theorem}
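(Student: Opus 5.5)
The statement just made is Theorem~\ref{thm:removal}, the removable singularity theorem for finite-area holomorphic maps into a compact Hermitian manifold. The plan is to prove the punctured-disk statement first and then deduce the entire-curve statement by the inversion $w=1/z$.

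For a holomorphic $u:\D^*\to X$ with $\int_{\D^*}u^*\omega<\infty$, the key steps are as follows.

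\emph{Step 1: small oscillation on circles.} Since $\omega$ is a smooth positive form on a compact manifold, it dominates a constant multiple of any other Hermitian metric, so the finiteness of area does not depend on the metric. Write the annuli $A_k=\{2^{-k-1}<|z|<2^{-k}\}$. By the Courant--Lebesgue lemma (length–area via Cauchy–Schwarz in polar coordinates), on each dyadic annulus there is a circle $|z|=r_k$ whose image has length at most $C\bigl(\Area(u|_{A_k\cup A_{k+1}})\bigr)^{1/2}$. These lengths tend to zero because the tail areas tend to zero.

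\emph{Step 2: control of the annuli between the circles.} Fix a finite cover of $X$ by coordinate balls, each biholomorphic to a Euclidean ball. For large $k$, the short curves $u(\{|z|=r_k\})$ each lie in a small ball. I then want to show that the image of the whole annulus $\{r_{k+1}\le|z|\le r_k\}$ stays in a small ball. This is the main obstacle.

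The approach I would try first is the monotonicity of area for holomorphic curves, i.e.\ the isoperimetric/monotonicity inequality. A holomorphic curve passing through a point $p$ at distance $\rho$ from both boundary components carries area at least $c\rho^2$, with $c$ depending only on the geometry of $(X,\omega)$ at small scales. Since the tail area tends to zero, the images of these annuli have diameter tending to zero. It is enough to apply monotonicity in local coordinates where $\omega$ is comparable to the Euclidean form. In those coordinates the image is a minimal-type complex curve, and monotonicity holds up to a constant.

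\emph{Step 3: convergence and extension.} Combining Steps 1 and 2, the oscillation of $u$ on $\{0<|z|<r_k\}$ tends to zero. Hence $u(z)$ converges to a point $p$ as $z\to0$, and $u$ extends continuously. In a holomorphic chart at $p$, the coordinates of $u$ are bounded holomorphic functions near $0$. Riemann's removable singularity theorem then gives holomorphic extension.

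\emph{Deducing the entire-curve statement.} Apply the punctured-disk statement to $v(w)=u(1/w)$ on $\{0<|w|<1\}$. Area is conformally invariant, so $v$ has finite area, and it therefore extends across $w=0$. This gives a holomorphic map $\PP^1\to X$.

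An alternative for Step 2 is the approach the paper itself signals: cite Ivashkovich--Shevchishin's removable-singularity theorem for finite-area pseudoholomorphic maps into Hermitian (even almost complex) manifolds~\cite{IvashkovichShevchishin2000}, and simply check that its hypotheses are met here.
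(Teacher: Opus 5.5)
Your overall plan and the paper's proof share the last step (Riemann's theorem in a chart at the limit point, then $w=1/z$ for the entire case). They differ in how continuity at the puncture is obtained. The paper does not prove it by hand: it notes that finite area makes small concentric annuli have small energy (energy $=2\,\Area$) and cites Corollary~1.6 of Ivashkovich--Shevchishin, which is your fallback. Your self-contained route via Courant--Lebesgue and monotonicity is a legitimate, more elementary alternative, since holomorphic charts and the Euclidean Lelong bound suffice here. The paper's citation also covers non-integrable structures but is a black box. As written, however, your route has a gap between Steps~2 and~3.

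\emph{The gap.} Step~2 controls only the consecutive annuli $\{r_{k+1}\le|z|\le r_k\}$, and Step~3 then asserts that the oscillation of $u$ on $\{0<|z|<r_k\}$ tends to zero. That does not follow. Diameters $d_k\to0$ of consecutive pieces need not be summable, so the image could drift indefinitely, as with a continuous map of the form $\gamma(\log\log(1/|z|))$. Ruling this out is exactly the content of the theorem.

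\emph{The fix.} Stay within your method but apply monotonicity to $\Omega_{k,K}=\{r_K\le|z|\le r_k\}$ for every $K>k$. Its boundary maps to the short curves $\gamma_k=u(\{|z|=r_k\})$ and $\gamma_K$, of lengths $L_k,L_K\to0$. Its area is at most the tail area $a_k=\int_{0<|z|<r_k}u^*\omega$, independently of $K$.

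Choose $\rho_k\le\rho_0$ with $c\rho_k^2>a_k$. Then no point of $u(\Omega_{k,K})$ is at distance $\ge\rho_k$ from $\gamma_k\cup\gamma_K$. Such a piece of curve would be proper in $B(p,\rho_k)$, because the boundary circles map outside that ball, so it would carry area $\ge c\rho_k^2$.

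This alone does not bound the diameter, and the same issue is already present in your Step~2. You also need connectedness: $u(\Omega_{k,K})$ is connected and contains both curves, so the $\rho_k$-neighborhoods of $\gamma_k$ and $\gamma_K$ must intersect. Hence $\operatorname{diam}u(\Omega_{k,K})\le L_k+L_K+4\rho_k$. Letting $K\to\infty$ bounds the oscillation on $\{0<|z|\le r_k\}$ by $L_k+\sup_{K>k}L_K+4\rho_k\to0$, and your Step~3 then goes through.
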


\begin{lemma}
\label{lem:two-marked-expanding-chain}
Let $(X,\omega)$ be a compact Hermitian manifold, and let $K_0,K_1\subset X$
be disjoint compact subsets.
Suppose $R_j>1$, $R_j\to\infty$, and the holomorphic maps $f_j:U_j\to X$, where
$U_j\subset\C$ is open
and $\overline\D_{R_j}\Subset U_j$, satisfy
\[
 f_j(0)\in K_0,\qquad f_j(1)\in K_1,
 \qquad \sup_j\Area_{R_j}(f_j)<\infty.
\]
Then there are nonconstant holomorphic maps $u_1,\ldots,u_\ell:\PP^1\to X$
whose images form a chain: the first meets $K_0$, the last meets $K_1$, and
consecutive images intersect. Their total area, counted with the
multiplicities of these maps, satisfies
\[
 \sum_{i=1}^{\ell}\Area(u_i)
 \leq\liminf_j\Area_{R_j}(f_j).
\]
\end{lemma}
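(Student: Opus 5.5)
The plan is to run Gromov-type compactness on the $f_j$ with the two marked points $0$ and $1$, in the form of Proposition~\ref{prop:principal-compactness} and Theorem~\ref{thm:removal}, and then extract a connected chain of sphere components linking the two markings. The two markings cannot collapse to the same component limit trivially, because $K_0\cap K_1=\varnothing$. Throughout, pass to a subsequence realizing $A_\infty=\liminf_j\Area_{R_j}(f_j)$.

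\emph{Step 1 (principal component).} Apply Proposition~\ref{prop:principal-compactness} with $R_\infty=\infty$. This gives a finite set $\Sigma_\infty\subset\C$ and an entire map $f_\infty$ with $\Area(f_\infty)\le A_\infty$. By Theorem~\ref{thm:removal}, $f_\infty$ extends to a sphere $u_0:\PP^1\to X$.

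If $0,1\notin\Sigma_\infty$, then $u_0(0)\in K_0$ and $u_0(1)\in K_1$, since $K_0,K_1$ are closed. So $u_0$ is nonconstant, and it alone is the chain.

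In general, the markings may sit at concentration points, or $f_\infty$ may be constant. We then pass to the bubble tree. At each $p\in\Sigma_\infty$, rescale around points of maximal derivative (Brody point selection) at scales $\epsilon_j\to0$ to extract nonconstant bubbles. Each bubble carries area at least $\hbar_X$, since a nonconstant sphere has $\Area\ge\hbar_X$. This bounds the number of bubbling steps by $A_\infty/\hbar_X$ plus one.

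Concentration may also occur "at infinity", meaning area escaping to $|z|\sim R_j$. Treat this by the inversion $z\mapsto 1/z$, combined with rescaling by intermediate radii. It is handled in the same way as a concentration point.

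\emph{Step 2 (connectedness via no neck energy).} This is the main obstacle. One must show that the images of adjacent components intersect, i.e.\ that no area and no diameter is lost in the neck annuli $\{\epsilon_j r\le|z-p|\le \rho\}$.

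I would first try the standard argument. Area in the necks tends to zero after all bubbles are removed. A holomorphic annulus of small area has small diameter by the monotonicity/isoperimetric estimate on compact Hermitian manifolds, as used by Ivashkovich--Shevchishin. Consequently the neck image shrinks to a point, which is common to the limits on both ends.

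This same estimate tracks the markings. The point $0$ lies on the principal component or in some bubble along the tree. Its limit value lies in $K_0$: either $0$ is a regular point of some rescaled limit, or it sits in a neck and its image is within $o(1)$ of a node value. Likewise the marking $1$ gives a component point in $K_1$.

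\emph{Step 3 (selecting the chain and the area bound).} The resulting limit is a tree of spheres. Constant components carry no area, and they are bypassed because their image is a single node point shared with their neighbours. Take the unique path in the tree from the component containing the marking-$0$ limit to the one containing the marking-$1$ limit, and delete constant components along it. Consecutive nonconstant images still intersect at the node points.

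The path is nonempty. Otherwise everything would be constant at one point, lying in both $K_0$ and $K_1$, which contradicts $K_0\cap K_1=\varnothing$. The endpoint components meet $K_0$ and $K_1$ respectively.

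For the total area, bubbles arise from disjoint source regions (the rescaled disks and the complementary neck domains) of $f_j$. Lower semicontinuity then gives
\[
\sum_i\Area(u_i)\le\liminf_j\Area_{R_j}(f_j).
\]
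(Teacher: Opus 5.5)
Your outline is sound, but it takes a different route from the paper. You build the bubble tree by hand. You Brody-rescale at the points of $\Sigma_\infty$ and iterate until no area is left in the necks. You then use a small-area estimate on the neck annuli to make adjacent images meet and to locate the limits of the markings $0$ and $1$. The principal component stays in the original coordinate automatically.

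The paper does not redo this analysis. After Proposition~\ref{prop:principal-compactness} it fixes $1<r<\rho$ with $\Sigma_\infty\subset\D_r$. It then applies the Ivashkovich--Shevchishin compactness theorem with boundary to the graphs $z\mapsto(z,f_j(z))$ from $\D_\rho$ into $\PP^1\times X$. The first coordinate of the nodal limit is constant on sphere components. By the fixed boundary collar and the argument principle, it is biholomorphic on the root, which identifies the root with $f_\infty|_{\D_\rho}$. The stable parametrizations $\sigma_j$ carry the markings to points $q_0,q_1$ of the nodal curve with values in $K_0,K_1$. Connectedness of the limit thus comes with the cited theorem, and what remains is your Step~3.

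Your version is more self-contained, but it must then supply what the citation provides. The extraction has to be organized (e.g.\ by soft rescaling) so that no bubble hides at intermediate scales. The claim that neck area tends to zero is the energy identity. For a possibly non-K\"ahler Hermitian $\omega$, it rests on the long-cylinder decay estimate of Ivashkovich--Shevchishin, not on monotonicity alone. Once neck area does tend to zero, monotonicity together with connectedness of each neck image does give the gluing, as you say.

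Two corrections. First, drop the treatment of concentration at infinity. Nothing is imposed on $\partial\D_{R_j}$, so area escaping toward $|z|\sim R_j$ need not produce spheres. For instance, a map given in a chart by $z\mapsto(z/R_j)^kv$ has area bounded below and a constant principal limit, yet its rescaling by $R_j$ is a fixed disk, not a sphere. The step is also unnecessary. Both markings and $\Sigma_\infty$ lie in a fixed disk, so the path between the marked components stays in the tree over that disk, with the principal map closed up at $\infty$ by Theorem~\ref{thm:removal}. Escaped area only makes the inequality strict, which is why the paper works on $\D_\rho$.

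Second, $\hbar_X$ is defined in the paper through a projective embedding, which the lemma's compact Hermitian setting does not provide. Use instead the small-energy constant of Ivashkovich--Shevchishin, which also bounds from below the area of every nonconstant sphere.
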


The disjoint target sets force the path between the two marked values
to contain a nonconstant component. Equality of nodal values preserves
incidence when constant components along that path are omitted.

The scalar normalization is imposed on the local base function:
$(\eta\circ\psi\circ f)_z(0)=1$.
If first-exit points from a fixed neighborhood of $F$ approach zero,
rescaling and marked compactness give a sphere meeting $F$ but not
contained in $F$.
Otherwise, a subsequence remains in that neighborhood on a fixed source
disk, where the functions $\eta\circ\psi\circ f_j$ are uniformly bounded.
Cauchy estimates then pass their derivatives at zero to the principal
limit.

\begin{lemma}
\label{lem:relative-compactness-alternative}
Let $(X,\omega)$ be a compact Hermitian manifold, and assume the relative
setting fixed above: $F\subset N\subset X$ is the nonempty compact fiber of
the proper holomorphic submersion $q:N\to Y$, $\psi$ is its local coordinate
expression with $F=\psi^{-1}(0)$, and
$\lambda_x^F=\eta\circ d\psi_x$ for the fixed $\eta\ne0$.
Suppose
$R_j\to R_\infty\in(0,\infty]$, where $\D_\infty=\C$, and
$f_j\in\cA_{R_j}^{F,\lambda^F}$
(see \eqref{eq:moving-root-class}), with
\[
 \Area_{R_j}(f_j)\longrightarrow E<\infty.
\]
Then at least one of the following conclusions holds:
\begin{enumerate}
\renewcommand{\labelenumi}{\textup{(\roman{enumi})}}
\item There is a nonconstant sphere $v:\PP^1\to X$ such that
\[
 v(\PP^1)\cap F\neq\varnothing,\qquad
 v(\PP^1)\not\subset F,\qquad \Area(v)\leq E.
\]
\item A subsequence has a principal limit
$h:\D_{R_\infty}\to X$ in the original source coordinate, as in
Proposition~\ref{prop:principal-compactness}, satisfying
\[
 \Area_{R_\infty}(h)\leq E,\qquad h(0)\in F,\qquad
 \lambda_{h(0)}^F(h_z(0))=1.
\]
On some fixed disk about zero, the base compositions
$\psi\circ f_j$ converge to $\psi\circ h$ smoothly on compact subsets.
\end{enumerate}
\end{lemma}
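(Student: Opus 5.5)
The plan follows the dichotomy sketched just before the statement. It is organized around first-exit points from the fixed neighborhood $V$ of $F$, where $F\Subset V\Subset N$. For each $j$ set
\[
 \rho_j\coloneqq\inf\{|z|:z\in\D_{R_j},\ f_j(z)\notin V\}\in(0,\infty],
\]
and pass to a subsequence along which $\rho_j\to\rho_\infty\in[0,\infty]$. Two cases then arise.

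\emph{Case $\rho_\infty=0$.} Choose $z_j$ with $|z_j|=\rho_j$ and $f_j(z_j)\in\partial V$, so $f_j(z_j)\notin V$. Rescale by setting $\tilde f_j(\zeta)=f_j(z_j\zeta)$ on $\D_{R_j/\rho_j}$, where $R_j/\rho_j\to\infty$ because $R_j\to R_\infty>0$. Rescaling leaves areas unchanged, and it gives $\tilde f_j(0)\in F$ and $\tilde f_j(1)\in\partial V$. Since $f_j$ is only known to extend past $\overline\D_{R_j}$, I will restrict to a slightly smaller concentric disk so that the closure condition in Lemma~\ref{lem:two-marked-expanding-chain} holds. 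That lemma, applied with $K_0=F$ and $K_1=X\setminus V$ (disjoint compacts), yields a chain $u_1,\dots,u_\ell$ of total area at most $E$. The first sphere meets $F$, the last meets $X\setminus V$, and consecutive images intersect. Let $i$ be the largest index with $u_i(\PP^1)\cap F\ne\varnothing$. If $u_i(\PP^1)\not\subset F$, take $v=u_i$. Otherwise $u_i(\PP^1)\subset F$ and $i<\ell$, since $u_\ell$ meets $X\setminus V$. Then $u_{i+1}$ meets $u_i(\PP^1)\subset F$, which contradicts the maximality of $i$. Hence (i) holds, with $\Area(v)\le\sum\Area(u_i)\le E$.

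\emph{Case $\rho_\infty>0$.} Fix $0<r<\min(\rho_\infty,R_\infty)$. For large $j$ we have $f_j(\D_r)\subset V\Subset N$. Apply Proposition~\ref{prop:principal-compactness} with this $R_\infty$, after first shrinking $R_j$ slightly if needed to secure the closure hypothesis, which changes areas by $o(1)$. This gives a principal limit $h$ on $\D_{R_\infty}$ with $\Area(h)\le E$ and convergence away from a finite set $\Sigma_\infty$. The key point is that $\psi\circ f_j:\D_r\to\psi(\overline V)$ is uniformly bounded, since $\psi$ is bounded on the compact set $\overline V\subset N$ (shrinking $N$ if needed). By Montel, a subsequence of $\psi\circ f_j$ converges smoothly on compacts of $\D_r$ to a holomorphic $\phi$. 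Off $\Sigma_\infty$, $f_j\to h$ pointwise with values in $\overline V$, so $h(\D_r\setminus\Sigma_\infty)\subset\overline V$. By continuity $h(\D_r)\subset\overline V\subset N$, and $\phi=\psi\circ h$ on $\D_r\setminus\Sigma_\infty$, hence on all of $\D_r$. Smooth convergence of $\psi\circ f_j$ then gives
\[
 \psi(h(0))=\lim_j\psi(f_j(0))=0
\]
and
\[
 (\eta\circ\psi\circ h)_z(0)=\lim_j(\eta\circ\psi\circ f_j)_z(0)=1.
\]
Since $F=\psi^{-1}(0)$, we get $h(0)\in F$, and the scalar normalization holds as well. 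This proves (ii), even when $0\in\Sigma_\infty$.

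I expect the main obstacle to be this identification across possible concentration at $0$. It is handled by the argument above: the base functions are bounded regardless of bubbling, so their Montel limit agrees with $\psi\circ h$ off the finite set $\Sigma_\infty$, and hence everywhere on $\D_r$ by continuity. A secondary care point is applying the compactness results on slightly shrunk disks so that their extension hypotheses hold.
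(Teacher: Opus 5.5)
Your proof is correct and follows essentially the same route as the paper. The paper also uses a first-exit dichotomy. When exit points accumulate at the origin, it rescales and applies the two-marked chain lemma. Otherwise it combines principal compactness with the bounded base maps $\psi\circ f_j$ on a fixed disk, and Cauchy estimates pass $h(0)\in F$ and the scalar normalization to $h$ even if $0$ is a concentration point.

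The differences are cosmetic: your infimum radius $\rho_j$, the target $K_1=X\setminus V$ instead of $\partial V$, and choosing the last component that meets $F$. The disk shrinking you mention is unnecessary, because membership in $\cA_{R_j}^{F,\lambda^F}$ already gives holomorphic extension past $\overline\D_{R_j}$.
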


\begin{proof}
Choose $F\Subset V\Subset N$ with smooth boundary, and fix $r>0$ such
that $R_j>r$ for all sufficiently large $j$.
There are two possibilities after passage to a subsequence.

First, suppose there are $z_j\to0$ with $z_j\neq0$ and
$f_j(z_j)\in\partial V$. Such points can be chosen as first exits along
segments from zero whenever no fixed small closed source disk has image
in $V$ along an infinite subsequence. The rescaled maps
\[
 v_j(\zeta)\coloneqq f_j(z_j\zeta),
 \qquad |\zeta|<R_j/|z_j|,
\]
extend past their closed source disks and satisfy
\[
 v_j(0)\in F,\qquad v_j(1)\in\partial V,\qquad
 \Area_{R_j/|z_j|}(v_j)=\Area_{R_j}(f_j)\longrightarrow E.
\]
Their source radii tend to infinity. Lemma~\ref{lem:two-marked-expanding-chain}
gives a chain of nonconstant spheres from $F$ to $\partial V$, of total
area at most $E$. Take its first component not contained in $F$. It meets
$F$, either at the first marked value or at its intersection with the
preceding component, and has area at most $E$. This proves \textup{(i)}.

Otherwise, for some $0<\rho<r$, an infinite subsequence satisfies
$f_j(\overline\D_\rho)\subset V$. Apply
Proposition~\ref{prop:principal-compactness} on $\D_{R_\infty}$ to this
subsequence. Its principal map $h$ takes values in $\overline V$ on
$\D_\rho$: this holds away from the finite concentration set by
convergence, and at that set by continuity. In particular, $\psi\circ h$
is defined there. Since $\overline V\Subset N$, the holomorphic maps
$\psi\circ f_j$ are uniformly bounded on $\D_\rho$. A further subsequence
converges locally uniformly to a holomorphic map, which equals
$\psi\circ h$ away from the finite concentration set by principal
convergence and hence throughout $\D_\rho$ by the identity theorem.
Cauchy estimates give convergence of all derivatives of these base maps
on compact subdisks, even if $0$ is a concentration point. This does not
exclude a bubble at $0$ inside a fiber of $\psi$. Since $f_j(0)\in F$ and
$(\eta\circ\psi\circ f_j)_z(0)=1$, we obtain
\[
 \psi(h(0))=0,\qquad
 (\eta\circ\psi\circ h)_z(0)
 =\lim_j(\eta\circ\psi\circ f_j)_z(0)=1.
\]
The area bound follows from principal compactness, proving \textup{(ii)}.
\end{proof}

The following standard consequence of local small-energy regularity supplies
the weighted derivative bound for a principal map on a finite disk; we include
the short argument, since the resulting bound may depend on the individual
map.

\begin{lemma}
\label{lem:finite-area-weighted-bound}
Let $(X,\omega)$ be a compact Hermitian manifold, let $0<R<\infty$, and
let $h:\D_R\to X$ be holomorphic with $\Area_R(h)<\infty$. Then
$B_R(h)<\infty$.
\end{lemma}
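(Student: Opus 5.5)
We argue by contradiction, using a Brody-type point selection combined with the removal theorem and the energy quantum $\hbar_X$. Suppose $B_R(h)=\infty$. Then there are points $z_j\in\D_R$ with
\[
(R-|z_j|)\,|h_z(z_j)|_\omega\to\infty .
\]
We first apply Brody's selection trick on a compact subdisk, where the weight is finite, so that the selected point nearly maximizes the weighted derivative. Concretely, for each $j$ choose $\rho_j<R$ with $|z_j|<\rho_j$. Since the supremum
\[
\sup_{|z|<\rho_j}(\rho_j-|z|)|h_z(z)|_\omega
\]
is finite and attained (the function is continuous and vanishes on $|z|=\rho_j$), take a maximizer $w_j$. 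Write $M_j$ for the maximum and $d_j=\rho_j-|w_j|$. Letting $\rho_j\uparrow R$ suitably, we can arrange $M_j\to\infty$.

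Next we rescale. Set
\[
v_j(\zeta)=h\bigl(w_j+\zeta/|h_z(w_j)|_\omega\bigr),\qquad |\zeta|<M_j/2 .
\]
The standard Brody estimate gives $|(v_j)_\zeta|_\omega\le 2$ on this disk, with $|(v_j)_\zeta(0)|_\omega=1$. The radii $M_j/2$ tend to infinity. Since area is invariant under reparametrization, $\Area(v_j)\le\Area_R(h)<\infty$. By Arzel\`a--Ascoli, a subsequence converges smoothly on compact sets to a nonconstant entire curve $v:\C\to X$ of finite area. Theorem~\ref{thm:removal} then extends $v$ to a nonconstant sphere, so $\Area(v)\ge\hbar_X$.

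It remains to derive a contradiction from this bubble. The images of the rescaled disks $\D_{M_j/2}$ in the source of $h$ are the Euclidean disks
\[
D_j=\D_{r_j}(w_j),\qquad r_j=\frac{M_j}{2|h_z(w_j)|_\omega}=\frac{d_j}{2},
\]
and these lie in $\D_R$. For each fixed $L$, the area of $h$ on $\D_{L/|h_z(w_j)|_\omega}(w_j)$ tends to $\Area_{\D_L}(v)$. If infinitely many of the disks $D_j$ were disjoint, each would carry area at least roughly $\hbar_X/2$ in the limit, contradicting $\Area_R(h)<\infty$. So we reduce to the main case: after passing to a subsequence, the $w_j$ converge to a point $w_\infty\in\overline\D_R$.

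There are two possibilities for $w_\infty$.
\begin{enumerate}
\item If $w_\infty\in\D_R$, then $h$ is smooth at $w_\infty$. Hence $|h_z|$ is bounded near $w_\infty$, while $d_j$ stays bounded below, so $M_j$ stays bounded. This contradicts $M_j\to\infty$.
\item If $|w_\infty|=R$, the scales $r_j\le d_j\le R-|w_j|\to0$ shrink. The disks $D_j$ concentrate at the boundary point, and their areas tend to at least $\hbar_X/2$ in the limit. A nested subsequence with disjoint disks then produces infinitely many disjoint disks each of area at least $\hbar_X/4$, contradicting finite area. Disjointness can be arranged by taking $j$ so sparse that $R-|w_{j'}|$ is much smaller than $r_j$ for all later $j'$, so each later disk $D_{j'}$ sits closer to the boundary than the distance of $D_j$ from it.
\end{enumerate}

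Equivalently, one can phrase the second case via small-energy regularity: finite area gives, for each $\epsilon>0$, a radius $\rho$ such that every disk of the form $\D_{(R-|z|)/2}(z)$ near the boundary has area less than $\epsilon$. Local $\epsilon$-regularity for holomorphic disks in a compact Hermitian manifold then yields
\[
|h_z(z)|\le C/(R-|z|)
\]
near the boundary, and compactness of the interior handles the rest.

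The main obstacle is the second case, making the area-disjointness bookkeeping precise. I would handle it through the $\epsilon$-regularity formulation just described, because it avoids the disjointness bookkeeping: the near-boundary bound on $|h_z|$ follows directly from the small area of the disks $\D_{(R-|z|)/2}(z)$, and the interior is covered by compactness.
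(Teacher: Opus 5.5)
Your proposal is correct, and the argument you finally commit to is exactly the paper's proof: for $|z|$ near $R$ the disk $\D_{(R-|z|)/2}(z)$ lies in a collar $\{r<|w|<R\}$ of $h$-area below $\epsilon$, small-energy regularity gives $(R-|z|)^2|h_z(z)|_\omega^2\le C\epsilon$, and $h_z$ is bounded on the remaining compact subdisk. The paper obtains that local estimate, $|u_z(0)|_\omega^2\le c\Area_1(u)$, from Ivashkovich--Shevchishin's Lemma~1.1 with $p=4$, Sobolev--Morrey, and the submean inequality in a chart.

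Your Brody route also closes without any disjointness bookkeeping. In the boundary case, the disks $\D_{L/\Lambda_j}(w_j)$ with $\Lambda_j=|h_z(w_j)|_\omega$ eventually lie in every such collar, whose $h$-area tends to $0$ as $r\uparrow R$, while their areas tend to $\Area_L(v)>0$. Only positivity of $\Area_L(v)$ is needed, which matters because $\hbar_X$ is defined only for projective $X$. In case~1, the relevant bound is $d_j\le R$, not that $d_j$ is bounded below.
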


The proof in Section~\ref{subsec:weighted-derivative-proof} applies the
small-energy estimate of Ivashkovich and
Shevchishin~\cite[Lemma~1.1]{IvashkovichShevchishin2000} in a
boundary collar.

Consequently, whenever compactness retains a principal map $h$ on a
finite disk, one may set $B_0=B_R(h)$ and apply
Proposition~\ref{prop:finite-area-enlargement} with this derivative bound
and the current area bound. The enlargement parameters are chosen only
after $h$ is known and may therefore vary from one principal map to the
next. The two-sided $B^2$ estimate \eqref{eff:eq:II-B} applies to a
closed-disk enlargement; when $h$ is defined only on its open disk, we
use the repair and the upper bound in
\eqref{eff:eq:closed-repair-cost}.

\subsection{Recovery with an area loss}

\begin{lemma}\label{eff:lem:loss-recovery}
In the relative setting fixed above, let $R_j>0$ and let
$f_j:\D_{R_j}\to X$ extend holomorphically past
$\overline\D_{R_j}$, with $f_j(0)\in F$ and
$\lambda^F_{f_j(0)}((f_j)_z(0))=1$. Assume $R_j\to R\in(0,\infty)$,
$\Area_{R_j}(f_j)\to E<\infty$, and $B_{R_j}(f_j)\to\infty$.
One may select either a nonconstant sphere $v:\PP^1\to X$ satisfying
\[
 v(\PP^1)\cap F\neq\varnothing,\qquad
 v(\PP^1)\not\subset F,\qquad \Area(v)\leq E,
\]
or a normalized principal map $h:\D_R\to X$ satisfying
\begin{equation}\label{eff:eq:recovery-loss}
 \Area_R(h)\le E-\hbar_X,
 \qquad B_R(h)<\infty.
\end{equation}
The same conclusion holds if the $f_j$ are holomorphic only on the
open disks $\D_{R_j}$, without extension across their boundaries;
all other hypotheses are unchanged.
\end{lemma}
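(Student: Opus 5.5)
We apply Lemma~\ref{lem:relative-compactness-alternative} with $R_\infty=R$. If alternative~(i) holds, the sphere $v$ is the first option of the statement and we are done. So assume alternative~(ii) holds. A subsequence then has a principal limit $h:\D_R\to X$ with $h(0)\in F$, $\lambda^F_{h(0)}(h_z(0))=1$ and $\Area_R(h)\le E$. We must improve this area bound by $\hbar_X$, and check that $B_R(h)<\infty$. The second point is Lemma~\ref{lem:finite-area-weighted-bound}, since $h$ is defined on the finite disk $\D_R$ with finite area. The first point is the substance of the proof: the blow-up $B_{R_j}(f_j)\to\infty$ must produce a nonconstant sphere bubble whose area is lost in the limit.

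To extract the bubble, we use Brody's point selection. Choose $z_j\in\D_{R_j}$ with $(R_j-|z_j|)|(f_j)_z(z_j)|_\omega\ge\frac12 B_{R_j}(f_j)$. Set $r_j=\frac12(R_j-|z_j|)$ and $\epsilon_j=|(f_j)_z(z_j)|_\omega^{-1}$, so that $r_j/\epsilon_j\to\infty$. The rescaled maps $w_j(\zeta)=f_j(z_j+\epsilon_j\zeta)$ are defined on $\D_{r_j/\epsilon_j}$ and have $|(w_j)_\zeta(0)|_\omega=1$. The near-maximality of the weighted derivative gives $|(w_j)_\zeta|_\omega\le 4$ on these expanding disks, by the standard Brody comparison $R_j-|z|\ge r_j$. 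Arzel\`a--Ascoli then gives a nonconstant limit $w:\C\to X$ with finite area. Theorem~\ref{thm:removal} extends $w$ to a sphere, so $\Area(w)\ge\hbar_X$ by \eqref{eff:eq:energy-quantum}. After passing to a subsequence, $z_j\to z_*\in\overline\D_R$.

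The main obstacle is the area bookkeeping. We need $\Area_R(h)+\hbar_X\le E$, i.e.\ the bubble area must be disjoint from the area retained by $h$, including when $z_*\in\partial\D_R$, where the bubble sits in the boundary collar. Fix a compact $K\Subset\D_R$ and small disks around $z_*$ and around the points of $\Sigma_\infty$. Smooth convergence on $K$ minus these disks gives area close to $\Area_K(h)$ there. The bubble region $z_j+\epsilon_j\D_M$ shrinks to $z_*$, so for large $j$ it is disjoint from $K$ minus a small disk about $z_*$, and it carries area close to $\Area_{\D_M}(w)$. The area of $h$ on a small disk about $z_*$ is small by absolute continuity. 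Adding these contributions gives $\liminf_j\Area_{R_j}(f_j)\ge\Area_K(h)+\Area(w)-o(1)$. Letting $K\uparrow\D_R$, $M\to\infty$ and the disks shrink yields $E\ge\Area_R(h)+\hbar_X$.

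The boundary case needs no separate treatment: since $\epsilon_j\ll R_j-|z_j|$, the bubble region lies inside $\D_{R_j}$, and we only ever compare areas with compact subsets $K$ of $\D_R$. The normalization of $h$ is inherited from alternative~(ii). The open-disk variant is proved the same way. Lemma~\ref{lem:relative-compactness-alternative} and Proposition~\ref{prop:principal-compactness} require extension past the closed disks, so we first apply them to the restrictions $f_j|_{\D_{R_j-1/j}}$, which converge to the same radius $R$. Point selection then uses the sup over the open disk, which is still infinite in the limit. If $B_{R_j}(f_j)=\infty$ for some $j$, we pick points with the weighted derivative exceeding $j$ instead.
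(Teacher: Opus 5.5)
Your argument is correct and follows the paper's proof. The steps match: the compactness alternative of Lemma~\ref{lem:relative-compactness-alternative}, Brody point selection producing a bubble of area at least $\hbar_X$, disjoint bookkeeping of principal and bubble areas, and Lemma~\ref{lem:finite-area-weighted-bound} for $B_R(h)<\infty$. Your excision of a small disk about $z_*$ replaces the paper's preliminary observation that the limit point lies in $\Sigma\cup\partial\D_R$.

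There are two small points in the open-disk variant.

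First, the areas of the restrictions $f_j|_{\D_{R_j-1/j}}$ need only converge, along a subsequence, to some $E'\le E$, since a bubble may sit in the collar. This is harmless, because your bookkeeping uses the full areas $\Area_{R_j}(f_j)$.

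Second, your fallback for $B_{R_j}(f_j)=\infty$ would not work as stated. A point where the weighted derivative exceeds $j$, without near-maximality relative to a finite supremum, gives no derivative bound nearby. The rescalings then need not be equicontinuous. The case is vacuous, however: Lemma~\ref{lem:finite-area-weighted-bound} gives $B_{R_j}(f_j)<\infty$ once the areas are finite, and the paper uses exactly this. The paper then shrinks to radii $r_j$ with $R_j-r_j<\min\{1/j,d_j/2\}$ and area loss below $1/j$, so that $B_{r_j}(f_j)\ge\frac14B_{R_j}(f_j)$, and reduces to the closed-disk case.
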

\begin{proof}
Apply Lemma~\ref{lem:relative-compactness-alternative}. If its first
alternative gives the required sphere, we are done.
Otherwise, pass to the subsequence in its second alternative. It has a
normalized principal map $h:\D_R\to X$ with $\Area_R(h)\le E$ and a
finite concentration set $\Sigma$. We use the same subsequence below,
so its areas still tend to $E$ and its weighted derivatives still
diverge.

Apply Lemma~\ref{lem:pointselect} to this subsequence. In its
construction, $w_j\in\D_{R_j}$ maximizes the weighted derivative. Put
$d_j=R_j-|w_j|$ and $\Lambda_j=|(f_j)_z(w_j)|_\omega$. Then
\[
 d_j\Lambda_j=B_{R_j}(f_j)\longrightarrow\infty,
 \qquad \Lambda_j\ge B_{R_j}(f_j)/R_j\longrightarrow\infty.
\]
The rescaled maps
$u_j(\zeta)=f_j(w_j+\zeta/\Lambda_j)$ converge locally smoothly on
$\C$ to a nonconstant holomorphic map $u$. Pass to a further
subsequence with $w_j\to w\in\overline\D_R$. If $w$ were in
$\D_R\setminus\Sigma$, the smooth convergence to $h$ near $w$ would
bound $\Lambda_j$. Thus $w\in\Sigma\cup\partial\D_R$.

We now count the areas of $h$ and $u$ on disjoint parts of the same
source disks. Fix $S>0$ and a compact region
$Q\subset\D_R\setminus\Sigma$. For large $j$, the disk
$\Delta_{j,S}=\D_{S/\Lambda_j}(w_j)$ lies in $\D_{R_j}$ because
$d_j\Lambda_j\to\infty$. It shrinks to $w$, so it is disjoint from
$Q$. The compact set $Q$ is also contained in $\D_{R_j}$ for large $j$. Hence
\[
 \int_Q f_j^*\omega+\int_{\Delta_{j,S}}f_j^*\omega
 \le\Area_{R_j}(f_j).
\]
The first integral converges to $\int_Q h^*\omega$ by smooth principal
convergence. The source change $z=w_j+\zeta/\Lambda_j$ gives
$\int_{\Delta_{j,S}}f_j^*\omega=\Area_S(u_j)\to\Area_S(u)$. Thus
\[
 \int_Q h^*\omega+\Area_S(u)\le E.
\]
Exhaust $\D_R\setminus\Sigma$ by such regions $Q$, then let
$S\to\infty$. The finite set $\Sigma$ contributes no area to the
holomorphic map $h$, so
\[
 \Area_R(h)+\Area(u)\le E.
\]
This also covers $w\in\partial\D_R$: every fixed $Q$ remains a
positive distance from $w$. The map $u$ has finite area and thus
extends to a nonconstant sphere by Theorem~\ref{thm:removal}. The
projective degree estimate following \eqref{eff:eq:energy-quantum}
gives $\Area(u)\ge \hbar_X$. Therefore
$\Area_R(h)\le E-\hbar_X$. Finally,
Lemma~\ref{lem:finite-area-weighted-bound} gives $B_R(h)<\infty$.

This proves the closed-disk assertion. For the open-disk assertion in
the last sentence of the lemma, the maps need not extend across
$\partial\D_{R_j}$. Their areas converge to $E<\infty$, so
Lemma~\ref{lem:finite-area-weighted-bound} gives
$B_{R_j}(f_j)<\infty$ for all large $j$. Choose $w_j\in\D_{R_j}$
with
$(R_j-|w_j|)|(f_j)_z(w_j)|_\omega\ge B_{R_j}(f_j)/2$, and put
$d_j=R_j-|w_j|$. Since the area of each disk is finite, we can choose
$r_j<R_j$ with
\[
 R_j-r_j<\min\{1/j,d_j/2\},\qquad
 \Area_{R_j}(f_j)-\Area_{r_j}(f_j)<1/j.
\]
Then $r_j>|w_j|$, and the restriction to $\D_{r_j}$ extends past its
closed disk. Moreover,
\[
 B_{r_j}(f_j)\ge(r_j-|w_j|)|(f_j)_z(w_j)|_\omega
 \ge\tfrac14 B_{R_j}(f_j)\longrightarrow\infty.
\]
The restrictions retain the center and scalar normalization; their
radii tend to $R$ and their areas tend to $E$. Thus the closed-disk
case just proved applies.
\end{proof}

In the principal-map alternative, recovery keeps the whole principal
map, discards the rescaled sphere, and allows the center to move in
$F$; it is a subsequential operation on a sequence of disks, not a
cutting-and-gluing operation on one given disk, and it therefore
carries no closeness assertion near a bubble.

\subsection{Iteration and the relative sphere}\label{subsec:finite-stages}

Fix $\delta>0$ and set
\begin{equation}\label{eff:eq:window}
 L=\Acrit+\delta,\qquad \eta_0=\delta/2,
 \qquad C=\Acrit+2\delta.
\end{equation}
For a stage threshold $M>0$, choose all constants with the same area
and derivative caps $C,M$. Use the descent step $\sigma$ from
\eqref{eq:descent-step} and set
\begin{equation}\label{eff:eq:tau}
 \tau_M=\min\left\{\frac{\bar t}{2},
             \frac{\eta_0}{\max\{1,C_{II}\}}\right\}>0.
\end{equation}
One enlargement multiplies the radius by $1+\tau_M$ and adds at most
$\eta_0$ to the area. Since $L+\eta_0=\Acrit+3\delta/2<C$, enlarging
a disk of area below $L$ keeps it below the cap $C$. Apply the first
applicable rule, restarting with the updated disk after each surgery:
\begin{enumerate}
\item If $B_R(f)\geq M$, record the disk and end the stage.
\item If $B_R(f)<M$ and $\Area_R(f)\geq L$, perform Surgery I.
\item If $B_R(f)<M$ and $\Area_R(f)<L$, perform Surgery II with
$t=\tau_M$.
\end{enumerate}

\begin{proposition}\label{eff:prop:finite-stage}
In the relative setting fixed above, let $\delta>0$ and
$M>0$. Define $L,\eta_0,C$ by \eqref{eff:eq:window}, choose the surgery
constants for the caps $(C,M)$, and set $\sigma,\tau_M$ as in
\eqref{eq:descent-step} and \eqref{eff:eq:tau}. Let the starting disk
$f_s\in\cA_{R_s}^{F,\lambda^F}$ have radius $R_s>0$ and area
$A_s=\Area_{R_s}(f_s)<C$. The preceding rules
produce a disk with $A<C$ and $B\geq M$ in finitely many surgeries.
If $B_{R_s}(f_s)<M$, then \eqref{eq:radius-progress} gives
\[
 R_s/\ell_F\leq B_{R_s}(f_s)<M.
\]
Hence $R_s<\ell_FM$, so the numerator below is positive and
$N_M\geq1$. Set
\begin{align}
 N_M&=\left\lceil\frac{\log(\ell_F M/R_s)}
                         {\log(1+\tau_M)}\right\rceil,\label{eff:eq:N}\\
 J_0&=\begin{cases}
 0,&A_s<L,\\
 \left\lfloor(A_s-L)/\sigma\right\rfloor+1,&A_s\geq L,
 \end{cases}\label{eff:eq:J0}\\
 J&=\left\lfloor\eta_0/\sigma\right\rfloor+1.
 \label{eff:eq:J}
\end{align}
An explicit upper bound for the total number of surgeries is
\begin{equation}\label{eff:eq:total-count}
 T_M=J_0+N_M(1+J).
\end{equation}
If the starting disk already has $B\geq M$, no surgery is required.
\end{proposition}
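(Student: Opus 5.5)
The plan is a potential-function count in which the radius drives progress. Throughout the stage, all disks stay in $\cA_R^{F,\lambda^F}$ with area below $C$ and $B<M$ until termination. Each rule is then legitimate with the constants fixed for the caps $(C,M)$.

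\emph{Rule 2.} It applies when $L\le A<C$ and $B<M$. Proposition~\ref{cor:compact-fiber-descent}, with $A_0=C$, $B_0=M$ and margin $\delta$, yields $g$ with the same center, the same normalization, the same radius, and area reduced by at least $\sigma$.

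\emph{Rule 3.} It applies when $A<L$ and $B<M$. Proposition~\ref{prop:finite-area-enlargement}(i) with $t=\tau_M\le\bar t$ gives $G_t$ on $\D_{(1+\tau_M)R}$. It extends past the closed disk, has $G_t(0)=f(0)$ and $(G_t)_z(0)=f_z(0)$, so it stays in the class. Its area is at most $A+C_{II}\tau_M\le A+\eta_0<L+\eta_0<C$ by \eqref{eff:eq:tau}.

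\emph{Invariant.} So the area stays below $C$: descents only decrease it, and enlargements start below $L$. The bound $B<M$ is tested before every surgery, so the hypotheses of both propositions hold whenever they are invoked. I will not need the $B$-estimates \eqref{eq:quantitative-descent} and \eqref{eff:eq:II-B} for termination, only to know the rules are applicable. The stopping rule itself is what protects the cap $M$.

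\emph{Counting.} The radius never decreases and is multiplied by $1+\tau_M$ at each enlargement. While the stage continues, $B<M$, so \eqref{eq:radius-progress} gives $R<\ell_FM$. After $k$ enlargements the radius is $R_s(1+\tau_M)^k$. Hence $k<\log(\ell_FM/R_s)/\log(1+\tau_M)\le N_M$ as long as the stage has not ended, and a stage can include at most $N_M$ enlargements before $B\ge M$ is forced.

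Between enlargements, consecutive descents each lower the area by $\sigma$ and are only performed while $A\ge L$.
\begin{itemize}
\item Initially $A_s<C$, so at most $\lfloor (A_s-L)/\sigma\rfloor+1=J_0$ descents bring the area below $L$; if $A_s<L$ none are needed.
\item After an enlargement the area is below $L+\eta_0$, so at most $\lfloor\eta_0/\sigma\rfloor+1=J$ descents follow before the area drops below $L$ again.
\end{itemize}
Thus the surgeries are at most $J_0$ initial descents, plus at most $N_M$ blocks each consisting of one enlargement and at most $J$ descents. That gives the total $T_M=J_0+N_M(1+J)$, and the final disk has $A<C$ and $B\ge M$.

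Two small points remain. First, if $B_{R_s}(f_s)\ge M$, rule 1 fires immediately. Second, $N_M\ge1$ because $R_s<\ell_FM$.

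\emph{Main obstacle.} The step needing care is that the surgery constants $\sigma$, $\bar t$ and $C_{II}$ are uniform in the radius $R$, which grows during the stage. This is exactly what both propositions assert: their constants depend only on the caps, on $\delta$, and on the fixed data. I would also check the boundary case where a descent lands with area still $\ge L$ but $B$ has risen past $M$. This is harmless, since rule 1 then ends the stage with area below $C$.
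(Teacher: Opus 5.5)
Your proposal is correct and follows essentially the same argument as the paper. The invariants $A<C$ and $B<M$ before each surgery make the cap-$(C,M)$ constants applicable; at most $J_0$ initial descents and at most $J$ descents after each enlargement are needed, with the $+1$ covering equality at $L$; and \eqref{eq:radius-progress} limits the number of enlargements to $N_M$. The paper's proof additionally records the area ledger \eqref{eff:eq:area-ledger} for later use, but the statement itself does not need it.
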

\begin{proof}
Both surgeries preserve normalization. Descent lowers area, and
enlargement starts below $L$ and ends below $L+\eta_0<C$. Thus $A<C$
throughout. Each operation starts with $B<M$, so the chosen constants
apply until the stage ends.

At most $J_0$ initial descents bring $A<L$ or end the stage. After each
enlargement, at most $J$ descents do the same; the extra $1$ in the
counts covers equality at $L$. Allow at most $N_M$ such blocks. At the
$N_M$th enlargement, if needed,
\[
 R_s(1+\tau_M)^{N_M}\geq\ell_F M,
\]
so \eqref{eq:radius-progress} gives $B\geq M$. This proves the bound
$T_M$. Summing the area changes over any completed prefix, with
$N_I,N_{II}$ denoting its surgery counts, also gives
\begin{equation}\label{eff:eq:area-ledger}
 0\leq A
       \leq A_s-N_I\sigma+N_{II}\eta_0,
 \qquad N_I\sigma\leq A_s+N_{II}\eta_0.
\end{equation}
\end{proof}

\par\medskip\noindent{\bfseries A finite hierarchy of principal recoveries.}
\label{eff:sec:nested-recursion}\par\nobreak
\smallskip\noindent
A finite stage can make $B$ arbitrarily large without forcing the
source radius to grow beyond a prescribed value. At a bounded limiting
radius, Lemma~\ref{eff:lem:loss-recovery} either gives the required sphere
or returns a principal map whose area has dropped by at least $\hbar_X$.
The principal map may be defined only on an open disk. Repair makes it
eligible for another finite stage but can increase its area; we therefore
record each principal map as the area record \emph{before} repairing it.
The argument bounds only how deeply the recoveries nest; it does not
bound the total number of finite stages or subsequence selections.

\begin{proposition}\label{eff:prop:hierarchy}
In the relative setting fixed above, let $\delta>0$ and
$C=\Acrit+2\delta$ as in \eqref{eff:eq:window}, and let $\hbar_X$ be the
area quantum in \eqref{eff:eq:energy-quantum}. For every finite
$T>0$ and every $f\in\cA_R^{F,\lambda^F}$ with $R>0$ and
$\Area_R(f)<C$, the finite stages and principal recoveries give either a nonconstant
sphere $v:\PP^1\to X$ satisfying
$v(\PP^1)\cap F\neq\varnothing$ and
$v(\PP^1)\not\subset F$, or a normalized disk of radius at least
$T$. The resulting sphere or disk has area at most $C$, and source
radii never decrease. At most $\lfloor C/\hbar_X\rfloor+1$ recovery levels
are nested. The resulting disk may be defined only on its open source
disk.
\end{proposition}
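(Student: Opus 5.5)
I will prove the proposition by induction on the area level $k=\lfloor a/\hbar_X\rfloor$, where $a$ is the recorded area of the disk currently being processed (for a principal map, recorded before repair). Fix $T$. The inductive claim is: for every normalized disk whose recorded area is $a<C$ and for which the procedure below is applied, one obtains either the required sphere or a normalized disk of radius at least $T$ with area at most $C$. I will also show that at most $k+1$ recoveries are nested.

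Given such a disk, first make it eligible for a finite stage. If it is a principal map defined only on the open disk $\D_R$, Lemma~\ref{lem:finite-area-weighted-bound} gives $B_R(h)<\infty$. Proposition~\ref{prop:finite-area-enlargement}(ii), with $B_0=B_R(h)$ and $t$ small, then produces a disk in $\cA_{R'}^{F,\lambda^F}$ with $R'>R$, the same center and normalization, and area at most $a+C_{II}t<C$. Here I use $a<C$, choosing $t$ after $h$ is known.

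Next run finite stages with thresholds $M_i\to\infty$ by Proposition~\ref{eff:prop:finite-stage}. The $i$th stage starts from the current disk and produces a normalized disk $f_i$ of radius $R_i$, area $<C$, and $B_{R_i}(f_i)\ge M_i$. Radii never decrease along a stage, since Surgery I fixes the radius and Surgery II enlarges it. If some $R_i\ge T$, stop: this disk is the output. Otherwise all $R_i<T$, and because each stage starts from the previous output, the radii are nondecreasing and converge to some $R\in(0,T]$. After passing to a subsequence, the areas converge to some $E\le C$, while $B_{R_i}(f_i)\to\infty$.

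Now apply Lemma~\ref{eff:lem:loss-recovery}. It gives either the sphere, with $\Area\le E\le C$, or a normalized principal map $h$ on $\D_R$ with $\Area_R(h)\le E-\hbar_X\le C-\hbar_X$. In the second case the new recorded area $a'$ satisfies $\lfloor a'/\hbar_X\rfloor\le\lfloor C/\hbar_X\rfloor-1$. This is not yet a descent relative to the old $a$, only relative to $C$, so the induction must be set up more carefully than this.

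\emph{Key subtlety, and the main obstacle.} The area of the stage outputs can climb back up to nearly $C$, so $E$ need not be smaller than the previous recorded area $a$. The recovery loss must therefore be measured against $E\le C$, not against $a$. I resolve this by recording, at each recovery, the bound $a'\le E-\hbar_X$, and by observing that a recovered map at nesting depth $d$ is fed into stages whose outputs I again compare only with $C$. To make depth finite, I instead induct on the \emph{maximal possible} area: let $P(c)$ be the claim for all disks with recorded area $\le c$ whose stage outputs are run with area cap $c+\delta'$. I expect this to need the hierarchy of caps $C\ge C-\hbar_X+\ldots$ chosen so that a stage started from area $\le c$ keeps all outputs below $c$ plus the enlargement cost. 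The plan is to choose the repair parameter $t$ and the window $\eta_0$ at deeper levels so that the caps shrink by nearly $\hbar_X$ each level.

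Once the cap has dropped below $\hbar_X$, no sphere can bubble off: a nonconstant sphere has area at least $\hbar_X$. So Lemma~\ref{eff:lem:loss-recovery} cannot return a principal map with nonnegative area $\le E-\hbar_X<0$. Hence at that level $B$ cannot diverge at bounded radius, and the radii must exceed $T$. This terminates the nesting after at most $\lfloor C/\hbar_X\rfloor+1$ levels. Verifying that the per-level caps can be arranged compatibly with Proposition~\ref{eff:prop:finite-stage}, which needs area strictly above $\Acrit+\delta$ for descent, is the step I expect to be delicate. Where the caps fall below $L$, only enlargements occur, and the area can increase by $\eta_0$ per enlargement. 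I will try bounding the number of enlargements per level via $N_M$ and shrinking $\tau$ accordingly.
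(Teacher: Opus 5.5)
There is a genuine gap. Your one-level analysis is correct and coincides with the paper's $P_1$: repair a principal map, run finite stages with $M_i\to\infty$, and apply Lemma~\ref{eff:lem:loss-recovery} at the limiting radius. The problem is the mechanism you propose for making the nesting depth finite, and it cannot be realized. Surgery~I is available only when the area is at least $L=\Acrit+\delta$. Your caps are supposed to drop by nearly $\hbar_X$ for about $\lfloor C/\hbar_X\rfloor+1$ levels, eventually below $\hbar_X$, so they must fall below $L$, where a stage can only enlarge. Shrinking $\tau$ does not make enlargement cheap. Reaching $B\ge M$ through radius growth takes about $\log(\ell_F M/R_s)/\log(1+\tau)$ enlargements, each bounded only by $C_{II}\tau$. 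The available bound on the total increase is therefore of order $C_{II}\log(\ell_F M/R_s)$: it does not shrink with $\tau$ and is unbounded as $M\to\infty$. So you cannot keep stage outputs below any cap smaller than roughly $L+\eta_0$, and the conclusion that no sphere can bubble off at the last level has nothing to rest on.

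The missing idea is that the stage cap $C$ never needs to decrease. The loss should be measured against the sequence of \emph{recorded principal outputs of the previous level}, not against stage outputs. Let $P_k$ say that from any normalized closed disk of area $<C$, at most $k$ nested levels give the required sphere or disk, or a principal disk with $R_h<T$, $\Area_{R_h}(h)\le C-k\hbar_X$, and $B_{R_h}(h)<\infty$. To obtain $P_{k+1}$, repeatedly repair the current output $h_j$ and feed it back into $P_k$. Record the next output $h_{j+1}$ before repairing it; its area is at most $C-k\hbar_X$, however high the intermediate stage outputs climbed. The repair step must depend only on the integer cap $q_j=\max\{1,\lceil B_{R_j}(h_j)\rceil\}$, namely $t_q=\min\{\bar t_q/2,\hbar_X/(2\max\{1,C_{II,q}\})\}$. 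Then the repaired area is at most $C-k\hbar_X+\hbar_X/2<C$ and $R_{j+1}\ge(1+t_{q_j}/2)R_j$. Choosing $t$ merely ``after $h$ is known,'' as you do, suffices for one repair but not for the next step. If $T$ is never reached, the radii stay below $T$, so for each $Q$ only finitely many $j$ have $B_{R_j}(h_j)\le Q$; hence $B_{R_j}(h_j)\to\infty$. The open-disk form of Lemma~\ref{eff:lem:loss-recovery}, applied to the sequence $(h_j)$ itself, then gives area at most $C-(k+1)\hbar_X$. At $k=\lfloor C/\hbar_X\rfloor+1$ this alternative would have negative area and cannot occur.
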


\begin{proof}
We first describe one recovery level. If the starting radius is at
least $T$, there is nothing to do. Otherwise apply
Proposition~\ref{eff:prop:finite-stage} successively with derivative
thresholds $M=1,2,\ldots$, stopping if a radius reaches $T$. If this
never happens, the resulting closed-disk maps have areas below $C$,
nondecreasing radii tending to some $R_*\leq T$, and weighted derivatives
tending to infinity. Pass to a subsequence with convergent areas.
Lemma~\ref{eff:lem:loss-recovery} gives either the required sphere or
a normalized principal map $h:\D_{R_*}\to X$ with
$\Area_{R_*}(h)\leq C-\hbar_X$. If $R_*=T$, this disk is a required output.
One level therefore either completes the proposition or returns a principal
disk of radius less than $T$ and area at most $C-\hbar_X$.

We now iterate this alternative. More precisely, let $P_k$ be the
assertion that, starting from any normalized closed-disk map of area
less than $C$, at most $k$ nested recovery levels give either a
required sphere or disk, or a normalized principal disk $h:\D_{R_h}\to X$
with
\[
 R_h<T,\qquad \Area_{R_h}(h)\leq C-k\hbar_X,\qquad B_{R_h}(h)<\infty.
\]
Source radii do not decrease. The preceding paragraph proves $P_1$;
the finiteness of $B_{R_h}(h)$ follows from
Lemma~\ref{lem:finite-area-weighted-bound}.

Assume $P_k$. Apply it to the starting disk. There is nothing further
to prove if it gives a required sphere or disk. Otherwise denote its
principal output by $(h_1,R_1)$. We may repeat the following operation
as long as the output radius remains below $T$. The principal outputs
have finite $B$, but no common bound for it is known. We therefore
prepare the repair constants for every possible integer derivative cap:
for each $q\geq1$, take $\bar t_q$ and $C_{II,q}$ from
Proposition~\ref{prop:finite-area-enlargement} with area cap $C$ and
$B$-cap $q$, and set
\[
 t_q=\min\left\{\bar t_q/2,\,
       \frac{\hbar_X}{2\max\{1,C_{II,q}\}}\right\}>0.
\]
Given a principal output $(h_j,R_j)$, put $b_j=B_{R_j}(h_j)$ and
$q_j=\max\{1,\lceil b_j\rceil\}$. Then $B_{R_j}(h_j)\leq q_j$
and $\Area_{R_j}(h_j)\leq C$, so the repair estimates chosen for
the caps $(C,q_j)$ apply to $h_j$.
Apply the open-disk repair \eqref{eff:eq:closed-repair} to $h_j$ with
$t=t_{q_j}$. It produces a normalized map $g_j$ holomorphic past its
closed source disk, with radius $(1+t_{q_j}/2)R_j$ and
\[
 \Area(g_j)\leq C-k\hbar_X+\hbar_X/2<C.
\]
If this radius reaches $T$, retain $g_j$. Otherwise apply $P_k$ to
$g_j$. A required sphere or disk ends the construction; in the
remaining case record the next principal output $(h_{j+1},R_{j+1})$,
again \emph{before} repairing it. Its area is at most $C-k\hbar_X$, and
\[
 R_{j+1}\geq(1+t_{q_j}/2)R_j.
\]

Suppose this repetition continues indefinitely without reaching $T$.
Then $R_j$ increases to some $R_*\leq T$. We claim that $b_j\to\infty$.
For each integer $Q\geq1$, set
$a_Q=\min_{1\leq q\leq Q}t_q/2>0$. Every index with $b_j\leq Q$
contributes a radius factor of at least $1+a_Q$. Since all radii stay
below $T$,
\[
 \#\{j:b_j\leq Q\}
 \leq\frac{\log(T/R_1)}{\log(1+a_Q)}<\infty.
\]
This proves the claim. Choose a subsequence of the recorded maps whose
areas converge to $E\leq C-k\hbar_X$. The open-disk assertion of
Lemma~\ref{eff:lem:loss-recovery} now gives either a required sphere or
a normalized principal map on $\D_{R_*}$ of area at most
$E-\hbar_X\leq C-(k+1)\hbar_X$. If $R_*=T$, retain that disk; otherwise this
is the remaining alternative in $P_{k+1}$. The repeated calls to $P_k$
do not increase the nesting depth: only this last principal recovery
adds one level.

Hence $P_k$ holds for every $k\geq1$. With
$k=\lfloor C/\hbar_X\rfloor+1$, its remaining principal-disk alternative
would have negative area and is impossible. A required sphere or disk
must therefore occur. The construction may use countably many finite
stages and subsequence selections, while its recovery depth has the
stated finite bound.
\end{proof}

\begin{proof}[Proof of Theorem~\ref{thm:bounded-fiber-escape}]
Choose $F\Subset V\Subset N$ as above and put
$\rho=1+\sup_{\overline V}|\eta\circ\psi|$.
Every normalized disk $f$ of radius greater than $\rho$ must leave $V$
before source radius $\rho$. Indeed, suppose that
$f(\D_\rho)\subset V$ and set $b=\eta\circ\psi\circ f$. Then $b$ is
holomorphic on $\D_\rho$, while normalization gives
$b(0)=0$ and $b'(0)=1$. Moreover,
\[
 \sup_{\D_\rho}|b|
 \leq\sup_{\overline V}|\eta\circ\psi|=\rho-1.
\]
For every $0<r<\rho$, Cauchy's estimate for the derivative at the
origin therefore gives
\[
 1=|b'(0)|
 \leq\frac{1}{r}\sup_{|z|=r}|b(z)|
 \leq\frac{\rho-1}{r}.
\]
Letting $r\uparrow\rho$ yields
$1\leq(\rho-1)/\rho<1$, a contradiction. Hence there is
$\zeta\in\D_\rho$ with $f(\zeta)\notin V$. Since $f(0)\in F\subset V$,
continuity gives a first exit along the segment $t\zeta$, $0\leq t\leq1$:
if
\[
 t_0=\sup\{t\in[0,1]:f(s\zeta)\in V\text{ for every }0\leq s<t\},
\]
then $z=t_0\zeta$ satisfies
$0<|z|<\rho$ and $f(z)\in\partial V$.

For each $j\geq2$, set $\delta_j=1/(2j)$ and
$C_j=\Acrit+1/j$. Apply
Proposition~\ref{eff:prop:hierarchy} with cap $C_j$ and target radius $T_j=2j\rho$, starting from the same
seed, whose area is less than $\Acrit$.
If it returns a disk $h_j$ of radius at least $T_j$, choose its first
exit $z_j$ as above and set $g_j(\zeta)=h_j(z_j\zeta)$.
This map is defined on a disk of radius greater than $2j$, so its
restriction to $\D_j$ extends past $\overline\D_j$ even when
$h_j$ is defined only on its open disk. If instead the hierarchy returns a sphere $u$, then
$u(\PP^1)$ meets $\partial V$. Indeed, if $u(\PP^1)\subset V$, then
$\psi\circ u:\PP^1\to\C^{d_Y}$ is holomorphic and hence constant.
Since $u(\PP^1)$ meets $F=\psi^{-1}(0)$, this constant is zero, so
$u(\PP^1)\subset F$, contrary to the conclusion of the hierarchy.
Choose an affine coordinate on the sphere in which selected preimages
of $F$ and $\partial V$ are $0$ and $1$, and restrict to $\D_j$ to
define $g_j$.
In both cases,
\[
 g_j(0)\in F,\qquad g_j(1)\in\partial V,\qquad
 \Area_j(g_j)\leq\Acrit+1/j.
\]
Lemma~\ref{lem:two-marked-expanding-chain} gives a chain between these
target sets of total area at most $\Acrit$. Its first component not
contained in $F$ meets $F$ at the first marked value or at the preceding
node, and is the required sphere.
\end{proof}

\begin{proof}[Proof of Theorem~\ref{thm:intro-pointwise-fano}]
Given $x\in X$, let $N$ be a coordinate neighborhood of $x$ and let
$q:N\to Y\subset\C^n$ be its coordinate map. The fiber over $q(x)$ is
$F=\{x\}$, and $q$ is a submersion. Theorem~\ref{thm:bounded-fiber-escape}
therefore gives a nonconstant sphere through $x$ with area at most
$\Acrit$.
\end{proof}

\begin{remark}
\label{rem:metric-bound-example}
On $\PP^n$, use the Fubini--Study form
\[
 \omega_{\mathrm{FS}}
 =\frac{i}{2}\partial\bar\partial
   \log(1+|w^1|^2+\cdots+|w^n|^2).
\]
In the normalization \eqref{eq:normalization},
$\Ric(\omega_{\mathrm{FS}})=2(n+1)\omega_{\mathrm{FS}}$ and
$\int_{\PP^1}\omega_{\mathrm{FS}}=\pi$ on a projective line.
Thus, with $\kappa=2(n+1)$, the bound in
Theorem~\ref{thm:bounded-fiber-escape} becomes $\pi$.
Every nonconstant sphere in $\PP^n$ has positive integer degree and
hence area at least $\pi$, so this value cannot be decreased.
\end{remark}

\section{Ricci traces and scalar-constrained variations}
\label{sec:trace-lifting}

We now use positive Ricci curvature to force area decrease above
$\Acrit$. The calculation combines a boundary trace identity with a
choice of holomorphic multipliers that fix the center and one scalar
derivative. Their Dirichlet cost $\pi(n+1)$,
together with Proposition~\ref{prop:matrixtrace}, gives the threshold
$\Acrit=2\pi(n+1)/\kappa$. Section~\ref{sec:analytic-engine} realizes
the resulting variation fields by holomorphic families with uniform
estimates. We retain the normalizations
\eqref{eq:normalization}--\eqref{eq:area-brody}.

For $z=x+iy$, the complex derivative and the Euclidean Laplacian are
\[
 \partial_z=\tfrac12(\partial_x-i\partial_y),\qquad
 \Delta=\partial_x^2+\partial_y^2=4\partial_z\partial_{\bar z}.
\]
On $f^*T_X$, $\nabla_z$ denotes the Chern covariant derivative induced
by $\omega$ in the source direction.
On $\partial\D_R$, $\nu$ is the outward Euclidean unit normal,
$\partial_\nu$ the corresponding derivative, and $ds$ Euclidean
arclength.
Let $A^*$ denote the conjugate transpose of $A$, and let $\Id_n$
denote the identity matrix. We use the Hilbert--Schmidt norm and the
operator norm
\begin{equation}
\|A\|_{\mathrm{HS}}^2\coloneqq\tr(AA^*)=\sum_{j,k}|A_{jk}|^2,\qquad
\|A\|_{\mathrm{op}}\coloneqq\sup_{|v|=1}|Av|.
\label{eq:matrix-norms}
\end{equation}
Distances and diameters are measured with the K\"ahler metric $\omega$.

For a Hermitian holomorphic line bundle $(L,h)$, let $\Theta(L)$ denote
the curvature of its Chern connection. We use the real curvature form
$i\Theta(L)=-i\partial\bar\partial\log|s|_h^2$, where $s$ is a
nonvanishing local holomorphic frame, and its cohomology class is
$2\pi c_1(L)$~\cite[Ch.~V, (12.8), (13.3)]{DemaillyCADG2012}.
We write $c_1(X)=c_1(K_X^{-1})$.

The following proposition is due to Yau~\cite{Yau1978}; it is an
immediate consequence of his prescribed-Ricci theorem. For the reader's
convenience, we include a proof.

\begin{proposition}[Yau]\label{prop:fano-metric}
Every complex Fano manifold admits a K\"ahler metric $\omega$ and a
constant $\kappa>0$ such that
\begin{equation}
 \Ric(\omega)\geq\kappa\omega.
 \label{eq:fano-metric}
\end{equation}
\end{proposition}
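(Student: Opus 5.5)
The plan is to obtain the metric from Yau's solution of the Calabi conjecture, using positivity of the anticanonical class. Since $X$ is Fano, $K_X^{-1}$ is ample. So there is a Hermitian metric on $K_X^{-1}$ whose curvature form $i\Theta(K_X^{-1})$ is a positive $(1,1)$-form; for instance, pull back the Fubini--Study metric under an embedding given by sections of a power $K_X^{-k}$ and take the $k$-th root. Call this form $\omega_0$. It is a Kähler form, and by the convention recalled above its class is $[\omega_0]=2\pi c_1(X)$.

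Next I would invoke Yau's prescribed-Ricci theorem in the Kähler class $[\omega_0]$. The form $\Ric(\omega_0)=-i\partial\bar\partial\log\det h_0$ is the curvature of the metric on $K_X^{-1}$ induced by $\omega_0$ on $\det T_X$. Hence it represents $2\pi c_1(X)=[\omega_0]$. Any closed real $(1,1)$-form in $2\pi c_1(X)$ is the Ricci form of a unique Kähler metric in a prescribed Kähler class. Choosing that form to be $\omega_0$ itself gives a Kähler metric $\omega\in[\omega_0]$ with
\[
\Ric(\omega)=\omega_0.
\]

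To finish, I compare $\omega_0$ with $\omega$. Both are smooth positive $(1,1)$-forms on the compact manifold $X$. The smallest eigenvalue of $\omega_0$ relative to $\omega$ is therefore a continuous positive function on $X$, and it attains a positive minimum $\kappa$. Then $\Ric(\omega)=\omega_0\geq\kappa\omega$, which is \eqref{eq:fano-metric}. As a remark, one could even take $\omega=\omega_0$ when $X$ is Kähler--Einstein, but Yau's theorem removes any need for that hypothesis.

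The only substantive point is the first step: producing a positive representative of $c_1(X)$ and matching the normalization $2\pi c_1$ with the convention $\Ric(\omega)=-i\partial\bar\partial\log\det h$. This needs only the curvature identity for $\det T_X$ recorded above, so I expect no real obstacle. The compactness argument for $\kappa$ is routine.
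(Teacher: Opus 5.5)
Your proposal is correct and is essentially the paper's proof: a positive representative of $2\pi c_1(X)$ from ampleness of $K_X^{-1}$, Yau's prescribed-Ricci theorem to realize it as $\Ric(\omega)$, and compactness to extract $\kappa$. Choosing the class $[\omega_0]$ is one instance of the paper's arbitrary fixed K\"ahler class; the only slip is in your closing aside, where in the K\"ahler--Einstein case one may take $\omega=\omega_0$ only if $\omega_0$ was chosen to be the K\"ahler--Einstein form, not an arbitrary positive representative.
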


\begin{proof}
Ampleness of $K_X^{-1}$ gives a positive closed $(1,1)$-form $\rho$
representing $2\pi c_1(X)$. In a fixed K\"ahler class, Yau's
prescribed-Ricci theorem gives a metric $\omega$ with
$\Ric(\omega)=\rho$. The least eigenvalue of $\rho$ relative to $\omega$
is positive and continuous on the compact manifold. Its positive minimum
therefore supplies $\kappa$ with $\Ric(\omega)\geq\kappa\omega$.
\end{proof}

The size of $\kappa$ depends on the scale:
$\Ric(c\omega)=\Ric(\omega)$ for $c>0$, so rescaling replaces
$\kappa$ by $\kappa/c$. In the class $[\omega]=2\pi c_1(X)$,
Tian's anticanonical $\alpha$-invariant gives a quantitative choice:
for every $0<\kappa<\min\{1,(n+1)\alpha(X)/n\}$, one can choose
$\omega$ satisfying \eqref{eq:fano-metric}; see~\cite{Tian1987} and~\cite[p.~324]{Szekelyhidi2011}. This concerns the choice of metric;
our construction works with any fixed metric satisfying the stated
Ricci bound.

\begin{remark}\label{rem:fano-converse}
On a Fano manifold, \eqref{eq:fano-metric} specifies the metric used
in the construction. The form $\Ric(\omega)$ is the Chern curvature of
the induced metric on $K_X^{-1}$. Its positivity expresses the ampleness
of $K_X^{-1}$ in differential-geometric terms, in accordance with
Kodaira's embedding theorem~\cite[pp.~314--316]{KodairaPNAS1954}.
\end{remark}

In Euclidean space, the paired second area variation is Green's formula
for the squared norm of the variation field.
The \emph{K\"ahler condition} $d\omega=0$ makes the same boundary
identity exact for a curved target, by the classical coefficient
symmetries of closed $(1,1)$-forms~\cite[Ch.~VI, \S4]{DemaillyCADG2012}.

\begin{lemma}\label{lem:levi}
Let $X$ be a complex manifold equipped with a K\"ahler metric $\omega$,
and fix $R>0$.
Let $\Delta_t\subset\C$
be a parameter disk centered at zero,
$U\subset\C$ an open neighborhood of $\overline{\D}_R$, and
$\mathcal F:U\times\Delta_t\to X$ a holomorphic map. Write
$f_t(z)\coloneqq \mathcal F(z,t)$ and
$\xi\coloneqq \partial_t\mathcal F(\cdot,0)$.  Denote the second derivatives along the two real parameter axes by
\[
 H_{\R}\coloneqq \left.\frac{d^2}{ds^2}\right|_{s=0}\Area_R(f_s),
 \qquad
 H_{i\R}\coloneqq \left.\frac{d^2}{ds^2}\right|_{s=0}\Area_R(f_{is}).
\]
Then
\begin{equation}
 H_{\R}+H_{i\R}
 =\int_{\partial\D_R}\partial_\nu\abs{\xi}_\omega^2\,ds.
 \label{eq:levi-boundary}
\end{equation}
\end{lemma}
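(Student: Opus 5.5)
Since $\mathcal F$ is holomorphic, $f_t^*\omega=\abs{(f_t)_z}_\omega^2\,dA$, so
\[
 \Area_R(f_t)=\int_{\D_R}\abs{\partial_z\mathcal F(z,t)}_\omega^2\,dA(z).
\]
For a smooth function $\phi(t)$ of one complex variable, $\frac{d^2}{ds^2}\phi(s)+\frac{d^2}{ds^2}\phi(is)$ at $s=0$ equals $\Delta_t\phi(0)=4\partial_t\partial_{\bar t}\phi(0)$. Differentiation under the integral is legitimate because $\mathcal F$ is smooth on a neighborhood of $\overline\D_R\times\{0\}$. The whole task therefore reduces to the pointwise identity
\[
 4\partial_t\partial_{\bar t}\bigl|(f_t)_z\bigr|^2_\omega\big|_{t=0}
 =\Delta_z\abs{\xi}_\omega^2-(\text{a term whose integral over }\D_R\text{ vanishes}),
\]
and the cleanest route is to avoid computing with metrics and work with forms on the two-dimensional parameter space.

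\textbf{Step 1: pull back to $U\times\Delta_t$.} Consider the closed $(1,1)$-form $\Omega=\mathcal F^*\omega$ on $U\times\Delta_t$, with coordinates $(z,t)$. Write
\[
 \Omega=\tfrac{i}{2}\bigl(a\,dz\wedge d\bar z+b\,dz\wedge d\bar t+\bar b\,dt\wedge d\bar z+c\,dt\wedge d\bar t\bigr),
\]
where $a=\abs{\mathcal F_z}^2_\omega$, $b=\langle\mathcal F_z,\mathcal F_t\rangle_\omega$ and $c=\abs{\mathcal F_t}^2_\omega$. Then $\Area_R(f_t)=\int_{\D_R}a(z,t)\,dA$, and $c(\cdot,0)=\abs{\xi}^2_\omega$.

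\textbf{Step 2: use $d\Omega=0$.} Closedness of $\Omega$ gives the coefficient symmetries $\partial_{\bar t}a=\partial_{\bar z}b$ and $\partial_t b=\partial_z c$. The Kähler condition enters only here, through $d\omega=0$, which makes $\Omega$ closed. Combining the two symmetries,
\[
 \partial_t\partial_{\bar t}a=\partial_t\partial_{\bar z}b=\partial_{\bar z}\partial_t b=\partial_{\bar z}\partial_z c.
\]
Hence $\Delta_t a=\Delta_z c$ pointwise. Integrating over $\D_R$ at $t=0$ yields
\[
 H_\R+H_{i\R}=\int_{\D_R}\Delta_z\abs{\xi}^2_\omega\,dA.
\]
So no correction term survives at all; this is the precise sense in which the identity is exact.

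\textbf{Step 3: apply Green's formula.} Since $\abs{\xi}^2_\omega$ is smooth on a neighborhood of $\overline\D_R$,
\[
 \int_{\D_R}\Delta\abs{\xi}^2_\omega\,dA=\int_{\partial\D_R}\partial_\nu\abs{\xi}^2_\omega\,ds,
\]
which is \eqref{eq:levi-boundary}.

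The only points needing care are bookkeeping. First, the sign and normalization conventions for $\Omega$ must match \eqref{eq:normalization}, so that $a$ is exactly the area density and $c$ is exactly $\abs{\xi}^2$. Second, the expansion of $d\Omega$ must be done correctly, in particular extracting the $dz\wedge d\bar z\wedge d\bar t$ and $dz\wedge dt\wedge d\bar t$ components. I expect no real obstacle beyond getting these conjugations right.
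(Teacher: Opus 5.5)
Your proposal is correct and is essentially the paper's proof. Both pull $\omega$ back to $U\times\Delta_t$ and use the closedness symmetries of the coefficients to get $\partial_t\partial_{\bar t}a=\partial_z\partial_{\bar z}c$ with $c(\cdot,0)=|\xi|^2_\omega$. Both then finish with the parameter Laplacian $4\partial_t\partial_{\bar t}$ and Green's formula. The only cosmetic difference is that you route the chain through the coefficient of $dz\wedge d\bar t$ rather than its conjugate $\widetilde\omega_{t\bar z}$.
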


Two holomorphic families with the same central map $f_0$ and the
same tangent field $\xi$ therefore have the same paired second variation
$H_{\R}+H_{i\R}$.

\begin{proof}
Set $\widetilde\omega\coloneqq \mathcal F^*\omega$, and write its Hermitian
coefficient matrix in the source coordinates $\zeta^1=z$, $\zeta^2=t$ as
\[
 \widetilde\omega=\frac{i}{2}\sum_{\alpha,\beta=1}^2
 \widetilde\omega_{\alpha\bar\beta}\,
 d\zeta^\alpha\wedge d\overline{\zeta^\beta}.
\]
The K\"ahler condition gives
$d\widetilde\omega=\mathcal F^*(d\omega)=0$. Closedness relates
derivatives in the parameter direction to derivatives in the source
direction. Comparing the $(2,1)$ and $(1,2)$ coefficients yields
\[
 \partial_t\widetilde\omega_{z\bar z}
 =\partial_z\widetilde\omega_{t\bar z},
 \qquad
 \partial_{\bar t}\widetilde\omega_{t\bar z}
 =\partial_{\bar z}\widetilde\omega_{t\bar t}.
\]
Differentiating these identities and commuting mixed derivatives gives
\[
 \partial_t\partial_{\bar t}\widetilde\omega_{z\bar z}
 =\partial_z\partial_{\bar t}\widetilde\omega_{t\bar z}
 =\partial_z\partial_{\bar z}\widetilde\omega_{t\bar t}.
\]
At $t=0$, the coefficient $\widetilde\omega_{t\bar t}$ is $|\xi|_\omega^2$.
Consequently
\[
 \partial_t\partial_{\bar t}\Area_R(f_t)\big|_{t=0}
 =\int_{\D_R}\partial_z\partial_{\bar z}|\xi|_\omega^2\,dA
 =\frac14\int_{\partial\D_R}\partial_\nu|\xi|_\omega^2\,ds.
\]
The sum of the second derivatives along the real and imaginary
parameter axes is the parameter Laplacian
$4\partial_t\partial_{\bar t}$. Hence
\[
 H_{\R}+H_{i\R}
 =4\left.\partial_t\partial_{\bar t}\Area_R(f_t)\right|_{t=0},
\]
which yields \eqref{eq:levi-boundary}.
\end{proof}

The classical Riemannian second-variation formula contains interior
curvature and second-fundamental-form terms; on K\"ahler submanifolds,
holomorphic normal fields are Jacobi fields~\cite[\S\S3.2, 3.5]{Simons1968}.
In our families the boundary values are allowed to move, so the first
variation of area need not vanish. Nevertheless, holomorphicity and
$d\omega=0$ give the exact paired boundary formula
\eqref{eq:levi-boundary}; the calculation also remains valid at branch
points.

Fix $R>0$ and a map $f$ holomorphic on a neighborhood of
$\overline\D_R$. To select variation fields before realizing them by
holomorphic families, define the boundary quadratic form on holomorphic
sections $\xi$ of $f^*T_X$ that are smooth on the closed disk:
\begin{equation}
 \mathcal H_{R,f}(\xi)\coloneqq
 \int_{\partial\D_R}\partial_\nu|\xi|_\omega^2\,ds.
 \label{eq:boundary-quadratic-form}
\end{equation}
When $\xi$ is the tangent field of a complex holomorphic family,
Lemma~\ref{lem:levi} identifies $\mathcal H_{R,f}(\xi)$ with the sum of the second area derivatives along the real
and imaginary parameter axes.

We use the Wiener--Masani boundary normalization in the form employed by
Berndtsson and Rosay~\cite[Theorem~2.1 and p.~888]{BerndtssonRosay2003};
Lemma~\ref{lem:factor} recalls the construction of the resulting
boundary-unitary holomorphic frame. The following computation combines
this normalization with Green's formula and the classical
determinant-curvature identities~\cite[Ch.~V, (13.3); Ch.~VIII, (6.6)--(6.7)]{DemaillyCADG2012}.
We record the matrix form that separates the Ricci integral from the
multiplier's Dirichlet cost and will give the descent threshold below.

\begin{proposition}\label{prop:matrixtrace}
Let $(X,\omega)$ be a Hermitian complex $n$-fold, let $R>0$, and let
$f:U\to X$ be holomorphic on an open neighborhood
$U\supset\overline\D_R$. Let
$e=(e_1,\ldots,e_n)$ be a holomorphic frame of $f^*T_X$, smooth
on the closed disk and unitary on $\partial\D_R$. For a positive
integer $N$, let $\Phi$ be a holomorphic
$\operatorname{Mat}_{n\times N}(\C)$-valued map on a neighborhood of
$\overline{\D}_R$, satisfying the boundary coisometry condition
\[
 \Phi(\zeta)\Phi(\zeta)^*=\Id_n
 \quad (|\zeta|=R).
\]
The columns $\Phi_a$ give sections
$(e\Phi)_a=\sum_{j=1}^ne_j\Phi_{ja}$ satisfying the trace identity
\begin{equation}
 \sum_{a=1}^N\mathcal H_{R,f}((e\Phi)_a)
 =4\int_{\D_R}\norm{\Phi'}_{\HS}^2\,dA
  -2\int_{\D_R}f^*\Ric(\omega).
 \label{eq:matrixtrace}
\end{equation}
\end{proposition}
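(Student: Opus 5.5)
We expand each $|(e\Phi)_a|^2_\omega$ in the Gram matrix $G=(\langle e_j,e_k\rangle_\omega)$ of the frame and sum over $a$ to obtain a single trace. With $G$ regarded as a matrix function on $\overline\D_R$, the sum is
\[
 \sum_{a=1}^N|(e\Phi)_a|_\omega^2=\tr\bigl(\Phi^{\mathsf T}G\overline\Phi\bigr)=\tr\bigl(G\,\overline{\Phi\Phi^*}\bigr),
\]
where the index convention follows the pairing $a^{\mathsf T}h\bar b$. Set $P=\Phi\Phi^*$. Then $P=\Id_n$ on $\partial\D_R$, and $G=\Id_n$ there by boundary unitarity. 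Differentiating the product $\tr(G\bar P)$ along the normal gives, on the boundary,
\[
 \partial_\nu\tr(G\bar P)=\tr(\partial_\nu G)+\tr(\partial_\nu\bar P)=\partial_\nu\tr G+\partial_\nu\tr P ,
\]
using that $\tr P$ is real. The proof of \eqref{eq:matrixtrace} then splits into two boundary integrals, one for $\tr G$ and one for $\tr P$.

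\emph{The $\Phi$ term.} Since $\Phi$ is holomorphic, $\partial_z\partial_{\bar z}\tr(\Phi\Phi^*)=\tr(\Phi'\Phi'^*)=\|\Phi'\|_{\HS}^2$, so $\Delta\tr P=4\|\Phi'\|_{\HS}^2$. Green's formula on $\D_R$ then gives
\[
 \int_{\partial\D_R}\partial_\nu\tr P\,ds=4\int_{\D_R}\|\Phi'\|_{\HS}^2\,dA .
\]

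\emph{The frame term.} On $\partial\D_R$ we have $G=\Id_n$. The tangential derivatives of $G$ therefore vanish there, and only the normal derivative matters. Writing $G=\Id_n+E$ with $E=0$ on the boundary, we get $\partial_\nu\log\det G=\tr(G^{-1}\partial_\nu G)=\partial_\nu\tr G$ on $\partial\D_R$. Green's formula for the smooth function $\log\det G$ on the closed disk gives
\[
 \int_{\partial\D_R}\partial_\nu\log\det G\,ds=\int_{\D_R}\Delta\log\det G\,dA=4\int_{\D_R}\partial_z\partial_{\bar z}\log\det G\,dA .
\]
Since $e$ is a holomorphic frame of $f^*T_X$, $\det G=|e_1\wedge\cdots\wedge e_n|^2$ is the squared norm of a holomorphic frame of $f^*K_X^{-1}$. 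Hence $-i\partial\bar\partial\log\det G=f^*\Ric(\omega)$, with the curvature convention of \eqref{eq:normalization} transported to $f^*K_X^{-1}$. This holds in the Hermitian case too, since $\Ric$ is defined as the curvature of $K_X^{-1}$.

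The one step that needs care is converting $4\partial_z\partial_{\bar z}(\cdot)\,dA$ into the $(1,1)$-form with the right sign and factor. With $dA=\tfrac{i}{2}dz\wedge d\bar z$ we have
\[
 i\partial\bar\partial\phi=i\,\phi_{z\bar z}\,dz\wedge d\bar z=2\phi_{z\bar z}\,dA .
\]
Therefore
\[
 4\phi_{z\bar z}\,dA=2\,i\partial\bar\partial\phi=-2f^*\Ric(\omega)\quad\text{for }\phi=\log\det G .
\]
This gives $\int_{\partial\D_R}\partial_\nu\tr G\,ds=-2\int_{\D_R}f^*\Ric(\omega)$.

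Adding the $\Phi$ term and the frame term yields \eqref{eq:matrixtrace}. The main obstacle is purely bookkeeping. One must keep the conjugation and transpose conventions consistent in $\tr(\Phi^{\mathsf T}G\overline\Phi)$, since the cross term vanishes only because both factors equal the identity on the boundary. One must also keep the factor 2 relating $i\partial\bar\partial$ to $\Delta\,dA$. Smoothness of $G$ and $\Phi$ up to the boundary justifies Green's formula.
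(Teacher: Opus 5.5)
Your proposal is correct and follows the paper's proof essentially step for step. Both arguments run through the same chain:
\begin{itemize}
\item the Gram-trace rewriting $\tr(G\bar\Phi\Phi^{\mathsf T})$;
\item the product rule together with Jacobi's formula on $\partial\D_R$, where $G=\bar\Phi\Phi^{\mathsf T}=\Id_n$;
\item Green's formula with $\Delta\tr(\Phi\Phi^*)=4\|\Phi'\|_{\HS}^2$;
\item the identity $f^*\Ric(\omega)=-\tfrac12\Delta\log\det G\,dA$, obtained by reading $\det G$ as the metric coefficient of $f^*K_X^{-1}$.
\end{itemize}
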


Here $\Phi'$ denotes the entrywise derivative, and
$\|\cdot\|_{\HS}$ is the norm in \eqref{eq:matrix-norms}.

\begin{proof}
The Gram matrix $G\coloneqq(\langle e_j,e_k\rangle_\omega)_{j,k}$
represents squared norms by $|ec|_\omega^2=c^{\mathsf T}G\bar c$.
Thus
\[
 \sum_{a=1}^N |(e\Phi)_a|_\omega^2
 =\sum_{a=1}^N\Phi_a^{\mathsf T}G\overline{\Phi_a}
 =\tr(\Phi^{\mathsf T}G\bar\Phi)
 =\tr(G\bar\Phi\Phi^{\mathsf T}).
\]
The first trace sums the diagonal entries of $\Phi^{\mathsf T}G\bar\Phi$; the last
equality uses cyclicity of the trace. Since finite summation commutes
with the normal derivative and integration,
\eqref{eq:boundary-quadratic-form} gives the boundary integral of
$\partial_\nu\tr(G\bar\Phi\Phi^{\mathsf T})$.
On the boundary, $G=\bar\Phi\Phi^{\mathsf T}=\Id_n$, but their normal derivatives
need not vanish. The product rule and Jacobi's classical determinant formula
(see, for instance,~\cite{BhatiaJain2009}) give, on $\partial\D_R$,
\[
 \begin{aligned}
 \partial_\nu\tr(G\bar\Phi\Phi^{\mathsf T})
 &=\tr(\partial_\nu G)+\tr(\partial_\nu(\bar\Phi\Phi^{\mathsf T})),\\
 \partial_\nu\log\det G
 &=\tr(G^{-1}\partial_\nu G)=\tr(\partial_\nu G).
 \end{aligned}
\]
The boundary normalization is essential in the second line:
$\tr$ and $\log\det$ have the same first differential at $\Id_n$.
The divergence theorem and
$\Delta\tr(\bar\Phi\Phi^{\mathsf T})=4\|\Phi'\|_{\HS}^2$ give the multiplier term.
Since $\det G$ is the metric coefficient on $f^*K_X^{-1}$,
\eqref{eq:normalization} gives
\[
 f^*\Ric(\omega)=-\frac12\Delta\log\det G\,dA.
\]
Green's formula therefore turns the determinant term
$\partial_\nu\log\det G$ into
$-2\int f^*\Ric(\omega)$.
\end{proof}

We first construct variations that fix the value of a disk at $0$ and
one nonzero scalar component of its derivative. Fix $x_0\in X$ and
a nonzero complex-linear covector
\begin{equation}
  \lambda\in T_{x_0}^*X\setminus\{0\}.
  \label{eq:lambda}
\end{equation}
The scalar condition $\lambda(f_z(0))=1$ fixes one component of the
initial derivative and gives
$|f_z(0)|_\omega\geq\|\lambda\|_{\omega,*}^{-1}$, where the norm on the
covector is dual to $\omega$ at $x_0$.
The multiplier below imposes the linearized value and scalar-derivative
constraints. Proposition~\ref{cor:compact-fiber-descent} realizes these constraints
by holomorphic families that preserve $f(0)=x_0$ and
$\lambda(f_z(0))=1$ exactly, while leaving the other derivative
components free. In Section~\ref{sec:spread-sphere-quotient}, the center
varies in a compact fiber and the covector is pulled back from the base.
In the principal branch of Lemma~\ref{lem:relative-compactness-alternative},
this base normalization passes to the limit.
For a disk satisfying $f(0)=x_0$ and $\lambda(f_z(0))=1$, the frame
value $e(0):\C^n\to T_{x_0}X$ identifies the scalar constraint with the
nonzero covector $L_f\coloneqq\lambda\circ e(0)$. Choose
$U_f\in U(n)$ so that its last $n-1$ columns form an orthonormal basis
of $\ker L_f$. Fixing the center requires every column of the multiplier
to vanish at zero; in the last $n-1$ directions, which lie in $\ker L_f$, a simple
zero also preserves the scalar derivative to first order, while in the
remaining direction a double zero makes its derivative at zero
vanish. The resulting multiplier is
\begin{equation}
 \Phi_f(z)\coloneqq U_f\operatorname{diag}
 \left((z/R)^2,z/R,\ldots,z/R\right).
 \label{eq:scalar-multiplier}
\end{equation}
The columns of $\Phi_f$ vanish at zero, so
$(e\Phi_f)'(0)=e(0)\Phi_f'(0)$. The first column of $\Phi_f$ has
zero derivative at zero; the derivatives of its remaining columns
belong to $\ker L_f\subset\C^n$. Thus $\Phi_f(0)=0$ and
$L_f\Phi_f'(0)=0$ impose the infinitesimal constraints. On $|z|=R$, the diagonal entries have modulus
one, so $\Phi_f\Phi_f^*=\Id_n$.
Left multiplication by the unitary matrix $U_f$ preserves the
Hilbert--Schmidt norm. Differentiating the diagonal entries therefore
gives
\[
 \norm{\Phi_f'(z)}_{\HS}^2
 =\frac{4|z|^2}{R^4}+\frac{n-1}{R^2}.
\]
Integrating in polar coordinates, we obtain
\begin{equation}
 \int_{\D_R}\norm{\Phi_f'}_{\HS}^2dA
 =2\pi\int_0^R
   \left(\frac{4r^2}{R^4}+\frac{n-1}{R^2}\right)r\,dr
 =\pi(n+1).
 \label{eq:scalarcost}
\end{equation}
The Ricci lower bound for our chosen metric gives
\begin{equation}
 \sum_{a=1}^n\mathcal H_{R,f}((e\Phi_f)_a)
 \leq 4\pi(n+1)-2\kappa \Area_R(f).
 \label{eq:scalartrace}
\end{equation}

The cost in \eqref{eq:scalarcost} is optimal within this multiplier
class. For a nonzero covector $L\in(\C^n)^*$, suppose that $\Phi$ is
holomorphic near $\overline\D_R$, takes values in
$\operatorname{Mat}_{n\times N}(\C)$, and satisfies
\[
 \Phi\Phi^*=\Id_n\quad\hbox{on }\partial\D_R,
 \qquad \Phi(0)=0,
 \qquad L\Phi'(0)=0.
\]
Choose $Q\in U(n)$ with first row
$L/\|L\|$. Replacing $\Phi$ by $Q\Phi$ preserves the boundary
identity and the Hilbert--Schmidt Dirichlet integral. Its first row
vanishes to order at least two, while the remaining rows vanish to
order at least one.
Each row has norm one on $\partial\D_R$. For any such row
$r(z)=\sum_{k\geq m}a_k(z/R)^k$, Parseval's identity and radial
integration give
\[
 \sum_{k\geq m}\|a_k\|^2=1,
 \qquad
 \int_{\D_R}|r'|^2\,dA
 =\pi\sum_{k\geq m}k\|a_k\|^2\geq\pi m.
\]
Summing over the rows gives $2\pi+(n-1)\pi=\pi(n+1)$, attained by
the multiplier in \eqref{eq:scalar-multiplier}.

\section{Uniform holomorphic realization and disk operations}
\label{sec:analytic-engine}

We realize the trace calculation by holomorphic families and prove the
quantitative disk operations stated in
Propositions~\ref{cor:compact-fiber-descent}
and~\ref{prop:finite-area-enlargement}.
Holomorphic mapping spaces and Stein neighborhoods of graphs provide a
qualitative deformation theory~\cite{Forstneric2007}. Both the parameter
radius and the cubic area remainder must remain uniform as the disk and
its source radius vary. This control is obtained by lifting to a fixed
Stein submanifold of affine space, perturbing there, and projecting
back to $X$.

Assume in this section that $X$ is projective. Keep the projective
embedding $\iota:X\hookrightarrow\PP^{m-1}$ fixed as in
Section~\ref{sec:spread-sphere-quotient}; it will be used to represent
$\iota\circ f$ by homogeneous coordinates and to construct the lift space
$\mathcal Y$. The embedding $\iota$ is kept fixed as $f$ and
$R$ vary, so the constants below may depend on $\iota$.

We use the standard $L^p$ and $C^k$ norms on source regions, with
Euclidean area measure. Vector and operator norms are induced by the
fixed Hermitian and Euclidean metrics; matrix-valued $L^2$ estimates use
the Hilbert--Schmidt norm.

\subsection{Frame and lifting estimates}
\label{subsec:realization-estimates}

Bounds on area and weighted derivative do not control derivatives on the
source boundary. The following example shows that uniform realization
requires direct estimates. For
$0<r<1$ and $\varepsilon>0$, the functions $h_r:\D\to\C$ given by
\[
 h_r(z)\coloneqq\varepsilon\frac{1-r^2}{1-rz}
\]
are holomorphic near $\overline\D$. A direct computation gives
\[
 \int_\D|h_r'|^2dA=\pi\varepsilon^2r^2,
 \qquad B_1(h_r)\leq\frac{\varepsilon}{2},
 \qquad |h_r'(1)|=\varepsilon\frac{r(1+r)}{1-r}\longrightarrow\infty.
\]
Bounded area and bounded $B_1$ therefore do not imply relative compactness in
$C^1(\overline\D)$ (with the Euclidean metric on $\C$); the uniform
estimates below must be proved directly.

Boundary spectral factorization and a Green potential bound control a
normalized frame. Homogeneous coordinates and Nicolau's corona
theorem~\cite{Nicolau1990} then give a uniformly bounded affine lift
with bounded Dirichlet integral.

Fix $A_0,B_0\geq0$. In the auxiliary results below, $f$ is holomorphic near $\overline{\D}_R$
and satisfies $\Area_R(f)\leq A_0$ and
$B_R(f)\leq B_0$. Norms on fixed
finite-rank bundles are chosen once and for all.

The spectral factorization theorem is due to Wiener and Masani~\cite[Theorem~7.13]{WienerMasani1957}. We use the smooth boundary
version stated by Berndtsson and Rosay~\cite[Theorem~2.1]{BerndtssonRosay2003}. It makes the frame in
Theorem~\ref{thm:uniform-lift} unitary on the boundary, as required by
the Ricci-trace identity.

\begin{lemma}\label{lem:factor}
Let $H:\partial\D\to\operatorname{Mat}_{n\times n}(\C)$ be a smooth
positive-definite Hermitian matrix function. There is a map
$A:\D\to\operatorname{GL}_n(\C)$, holomorphic in $\D$ and smooth on
$\overline\D$ with invertible boundary values, such that
\[
 A(\zeta)^*H(\zeta)A(\zeta)=\Id_n
 \quad(|\zeta|=1).
\]
\end{lemma}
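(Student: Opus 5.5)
The plan is to reduce the statement to a spectral factorization $H=B^*B$ on $\partial\D$, where $B$ and $B^{-1}$ are holomorphic in $\D$ and smooth on $\overline\D$; then $A=B^{-1}$ will do. Rather than invoking Wiener--Masani as a black box, I would build the factor from the inverse of a Toeplitz operator, as follows. Let $P_+$ denote the Szeg\H{o} projection of $L^2(\partial\D,\C^n)$ onto the vector-valued Hardy space $H^2$. Define $T_Hu=P_+(Hu)$ for $u\in H^2$.

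\emph{Invertibility and regularity.} Since $H\ge c\,\Id_n$ on $\partial\D$ for some $c>0$, we have $\langle T_Hu,u\rangle=\int_{\partial\D}\langle Hu,u\rangle\ge c\|u\|^2$. By Lax--Milgram, $T_H$ is therefore invertible on $H^2$. To obtain smoothness, I would note that $T_H$ acts boundedly on the Sobolev--Hardy spaces $H^2\cap W^{s,2}(\partial\D)$ for every $s\ge0$, because $H$ is smooth and $P_+$ preserves $W^{s,2}$. On each such space it is Fredholm of index zero: the homotopy $(1-\tau)\Id_n+\tau H$ consists of coercive symbols, and the index is constant along it. Its kernel is trivial by coercivity, so it is invertible there too. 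It follows that $Y\coloneqq T_H^{-1}(\Id_n)$, computed columnwise, is holomorphic in $\D$ and smooth on $\overline\D$. I expect this smoothness step to be routine but bookkeeping-heavy; alternatively one can cite the smooth boundary version in Berndtsson--Rosay.

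\emph{The factorization identity.} By construction $P_+(HY)=\Id_n$, so $HY=\Id_n+\overline{K}$, where $K$ is holomorphic and $K(0)=0$. Then $Y^*HY=Y^*(HY)$ is a product of boundary values of antiholomorphic functions, hence is itself antiholomorphic. Being Hermitian, it is also holomorphic, so it is a constant. Comparing means gives the constant as $\overline{Y(0)}^{\mathsf T}\cdot\Id_n=Y(0)^*$, so $Y^*HY=Y(0)^*$ on $\partial\D$. Moreover,
\[
 v^*Y(0)v=\langle T_H^{-1}v,v\rangle_{H^2}>0 \quad (v\neq0),
\]
so $P\coloneqq Y(0)$ is Hermitian positive definite. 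Setting $A\coloneqq YP^{-1/2}$, we get $A^*HA=P^{-1/2}PP^{-1/2}=\Id_n$ on the circle, and this also shows that $A$ is invertible on $\partial\D$.

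\emph{Invertibility inside $\D$ (the main point).} Suppose $Y(z_0)v=0$ for some $z_0\in\D$ and $v\ne0$. Set $\tilde u(z)=Y(z)v/(z-z_0)$; this lies in $H^2$ and is smooth on $\overline\D$. On $|z|=1$ we have $1/(z-z_0)=\bar z/(1-z_0\bar z)$, which is antiholomorphic and vanishes at $\infty$, i.e.\ has zero mean. Hence
\[
 H\tilde u=\frac{(\Id_n+\overline K)v}{z-z_0}
\]
is antiholomorphic with zero constant term, so $T_H\tilde u=0$. Injectivity of $T_H$ then forces $\tilde u=0$, hence $Yv\equiv0$, which contradicts invertibility of $Y$ on the boundary. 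Therefore $\det A$ has no zeros in $\overline\D$, and $A:\D\to\operatorname{GL}_n(\C)$ has all the required properties. The step I regard as genuinely delicate is this interior invertibility, since it does not follow from the boundary identity alone (a winding-number count of $\det Y$ gives only a tautology). The division trick above is what settles it.
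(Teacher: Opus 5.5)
Your argument is correct, but it does not follow the paper's route. The paper constructs nothing here. It quotes the Wiener--Masani theorem in the smooth boundary form of Berndtsson--Rosay, which gives $H=h^*h$ with $h$ and $h^{-1}$ holomorphic in $\D$ and smooth on $\overline\D$, and then sets $A=h^{-1}$. Invertibility inside the disk and smoothness of the inverse are therefore part of the cited statement.

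You instead reprove that theorem from the coercive, self-adjoint Toeplitz operator $T_H$:
\begin{itemize}
\item the unnormalized factor is $Y=T_H^{-1}(\Id_n)$;
\item the identity $P_+(HY)=\Id_n$ makes $Y^*HY$ both antiholomorphic and Hermitian, so it equals the constant $Y(0)^*$;
\item coercivity makes $Y(0)$ positive definite;
\item dividing by $z-z_0$ turns an interior zero of $\det Y$ into a nonzero element of $\ker T_H$.
\end{itemize}
Each step checks out. All functions involved are smooth on $\partial\D$, so the Fourier-mode bookkeeping for products is legitimate. On $\partial\D$ the factor $1/(z-z_0)$ has only modes $\le -1$, and $(\Id_n+\overline K)v$ has only modes $\le 0$, so $P_+(H\tilde u)=0$ as you claim.

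Your route gives an explicit, self-contained description of the normalized factor, $A=Y\,Y(0)^{-1/2}$. It also isolates the analytic inputs: coercivity of $T_H$, plus regularity of $T_H^{-1}$ on smooth data. The cost is that regularity step, which rests on the Fredholm theory of Toeplitz operators with smooth invertible matrix symbols on $H^2\cap W^{s,2}$. That theory uses compactness of the Hankel parts and homotopy invariance of the index. It is standard, but it is precisely the smooth-boundary part of Wiener--Masani that the paper delegates to Berndtsson--Rosay. Your fallback citation there brings you back to the paper's proof for that step.
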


\begin{proof}
The Wiener--Masani theorem in the smooth boundary form of~\cite[Theorem~2.1]{BerndtssonRosay2003} gives
$H=h^*h$, where $h$ and $h^{-1}$ are holomorphic on $\D$ and smooth
on $\overline\D$. Thus $A\coloneqq h^{-1}$ has the required boundary
normalization.
\end{proof}

For $z\in\D$, let $\delta_z$ be the unit point mass at $z$. The Dirichlet
Green function of the unit disk, with the sign and normalization used
here, is~\cite[Ch.~I, \S4.B]{DemaillyCADG2012}
\[
 g(z,w)=\log\left|\frac{1-\bar zw}{z-w}\right|,
 \qquad -\Delta_wg=2\pi\delta_z.
\]
On $\D_R$ we use $g_R(z,w)\coloneqq g(z/R,w/R)$.

The Green potential controls both the frame and the homogeneous lift.
The following elementary estimate separates a neighborhood of the
logarithmic singularity, controlled by $B_0$, from its complement,
controlled by the total area.

\begin{lemma}\label{lem:green}
Let $(X,\omega)$ be a Hermitian complex manifold, let $R>0$ and
$A_0,B_0\geq0$, and let $f:U\to X$ be holomorphic on an open
neighborhood $U\supset\overline\D_R$. Let
$\varepsilon_f^\omega(w)\coloneqq |f_z(w)|_\omega^2$ be the area
density of $f$, so that $f^*\omega=\varepsilon_f^\omega\,dA$ and
$\Area_R(f)=\int_{\D_R}\varepsilon_f^\omega\,dA$.
If $\Area_R(f)\leq A_0$ and $B_R(f)\leq B_0$, then
\begin{equation}
 \sup_{z\in\D_R}\int_{\D_R}g_R(z,w)\varepsilon_f^\omega(w)dA(w)
 \leq C_g(A_0,B_0)
 \coloneqq C_G(B_0^2+A_0),
 \label{eq:greenbound}
\end{equation}
where $C_G$ is a universal constant, independent of $R$ and $f$.
\end{lemma}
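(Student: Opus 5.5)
The plan is to reduce to the unit disk by scaling and then split the Green integral into a region near the pole and its complement. For scaling, set $f_R(\zeta)\coloneqq f(R\zeta)$ on a neighborhood of $\overline\D$. Then $\varepsilon^\omega_{f_R}(\zeta)=R^2\varepsilon_f^\omega(R\zeta)$, so $\Area_1(f_R)=\Area_R(f)$, and $B_1(f_R)=B_R(f)$ because $(1-|\zeta|)|(f_R)_\zeta(\zeta)|_\omega=(R-|R\zeta|)|f_z(R\zeta)|_\omega$. Since $g_R(z,w)=g(z/R,w/R)$, the substitution $w=R\omega'$ turns the left side of \eqref{eq:greenbound} at $z$ into the same integral for $f_R$ at $z/R$. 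It therefore suffices to treat $R=1$, with $\varepsilon=\varepsilon^\omega_f$ satisfying $\int_\D\varepsilon\,dA\le A_0$ and $\varepsilon(w)\le B_0^2/(1-|w|)^2$.

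Fix $z\in\D$ and put $d=1-|z|\in(0,1]$. The key elementary inequality is
\[
 |1-\bar z w|\le (1-|z|^2)+|z|\,|z-w|\le 2d+|z-w|,
\]
which follows from $1-\bar zw=(1-|z|^2)+\bar z(z-w)$. It gives
\[
 0\le g(z,w)\le\log\Bigl(1+\frac{2d}{|z-w|}\Bigr),\qquad w\in\D.
\]
I then split $\D$ into the near region $D_{\mathrm n}=\D\cap\D_{d/2}(z)$ and its complement $D_{\mathrm f}$.

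\emph{Far region.} On $D_{\mathrm f}$ we have $|z-w|\ge d/2$, so $g(z,w)\le\log5$. This part contributes at most $(\log5)A_0$.

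\emph{Near region.} On $D_{\mathrm n}$ we have $1-|w|\ge d/2$, so the weighted derivative bound gives $\varepsilon(w)\le 4B_0^2/d^2$. Hence, in polar coordinates about $z$,
\[
 \int_{D_{\mathrm n}}g\,\varepsilon\,dA\le\frac{4B_0^2}{d^2}\,2\pi\int_0^{d/2}\log\Bigl(1+\frac{2d}{r}\Bigr)r\,dr
 =8\pi B_0^2\int_0^{1/2}\log\Bigl(1+\frac2s\Bigr)s\,ds,
\]
where the substitution $r=ds$ removes all dependence on $d$. The last integral is a finite absolute constant, since $s\log(1+2/s)$ is integrable at $0$. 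Adding the two regions gives \eqref{eq:greenbound} with $C_G=\max\{\log5,\,8\pi\int_0^{1/2}s\log(1+2/s)\,ds\}$, which is independent of $z$, $R$ and $f$.

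The only delicate point is the near-diagonal estimate. The crude bound $g\le\log(2/|z-w|)$, combined with the $B_0$ control, leaves a factor $\log(1/d)$ that is unbounded as $z$ approaches the boundary circle. The inequality $|1-\bar zw|\le2d+|z-w|$, which uses that $z$ is close to $\partial\D$ exactly when $d$ is small, is what makes the near integral scale exactly like $d^2$. That scaling cancels the $d^{-2}$ coming from the weighted derivative bound.
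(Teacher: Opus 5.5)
Your proposal is correct and follows essentially the same route as the paper: reduce to $R=1$, split at $|w-z|=d/2$, bound $g\le\log5$ on the far region and pair $\varepsilon\le4B_0^2/d^2$ with the estimate $|1-\bar zw|\le2d+|z-w|$ on the near region. The only difference is cosmetic. You keep $\log(1+2d/|z-w|)$ and leave the constant as an integral, whereas the paper coarsens this to $\log\frac{5d}{2|z-w|}$ and evaluates the near contribution explicitly as $\pi B_0^2(\log5+\tfrac12)$.
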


\begin{proof}
First reduce to the unit disk. Under the dilation
$h(\zeta)\coloneqq f(R\zeta)$, the identities
\[
 \Area_1(h)=\Area_R(f),\qquad B_1(h)=B_R(f),\qquad
 \varepsilon_h^\omega(\eta)=R^2\varepsilon_f^\omega(R\eta)
\]
show that the hypotheses are unchanged. Since
$g_R(R\zeta,R\eta)=g(\zeta,\eta)$, change of variables also preserves
the Green-potential integral. We may therefore assume $R=1$.

Fix $z\in\D$ and set $\delta\coloneqq1-|z|$, its distance to the
boundary. Split the integral into the disk $|w-z|<\delta/2$ and its
complement in $\D$. On the first region,
\[
 1-|w|\geq1-|z|-|w-z|>\frac\delta2,
 \qquad
 |f_z(w)|_\omega\leq\frac{B_0}{1-|w|}.
\]
Thus $\varepsilon_f^\omega(w)\leq4B_0^2/\delta^2$.
To bound the Green kernel, write
\[
 1-\bar zw=(1-|z|^2)+\bar z(z-w).
\]
Since $1-|z|^2=(1+|z|)\delta\leq2\delta$, the triangle inequality gives
\[
 |1-\bar zw|\leq2\delta+|z-w|\leq\frac{5\delta}{2},
 \qquad
 g(z,w)\leq\log\frac{5\delta}{2|z-w|}.
\]
Integration in polar coordinates bounds this contribution by
\[
 \frac{4B_0^2}{\delta^2}\,2\pi
 \int_0^{\delta/2}r\log\frac{5\delta}{2r}\,dr
 =\pi B_0^2\left(\log5+\frac12\right).
\]

On the complementary region $|z-w|\geq\delta/2$, the same triangle
inequality instead yields
\[
 \frac{|1-\bar zw|}{|z-w|}
 \leq1+\frac{2\delta}{|z-w|}\leq5.
\]
Thus $g(z,w)\leq\log5$ there, and the remaining contribution is at most
\[
 (\log5)\int_\D\varepsilon_f^\omega\,dA
 \leq A_0\log5.
\]
Adding the two bounds proves \eqref{eq:greenbound}; one may take the
universal constant $C_G\coloneqq\pi(\log5+1/2)$, independently of $z$,
$R$, and $f$.
\end{proof}

We use the following Chern connection and curvature convention~\cite[Ch.~V, (12.2)--(12.4)]{DemaillyCADG2012}.
In the coordinates of \eqref{eq:normalization}, a section with
coefficient column $c$ has squared norm $c^{\mathsf T}h\bar c$.
Writing $\Theta(T_X)=\nabla^2$, the connection and curvature matrices are
\[
 \nabla'c=\partial c+\bar h^{-1}(\partial\bar h)c,
 \qquad \nabla''c=\bar\partial c,
 \qquad \Theta(T_X)=\bar\partial(\bar h^{-1}\partial\bar h).
\]
The coefficient of $dw^\alpha\wedge d\bar w^\beta$ in $\Theta(T_X)$ is
$-\partial_{\bar w^\beta}(\bar h^{-1}\partial_{w^\alpha}\bar h)$;
the minus sign comes from reversing the order of the differentials. For
$v=\sum_\alpha v^\alpha\partial_{w^\alpha}$ and a tangent vector $u$
with coefficient column $c$, our curvature tensor is
\begin{equation}
 \begin{split}
 R^\omega(v,\bar v,u,\bar u)
 \coloneqq\sum_{\alpha,\beta}v^\alpha\overline{v^\beta}\,
 c^{\mathsf T}\bigl[-\partial_{w^\alpha}\partial_{\bar w^\beta}h
 +(\partial_{w^\alpha}h)h^{-1}(\partial_{\bar w^\beta}h)\bigr]\bar c.
 \end{split}
 \label{eq:curvature-local-definition}
\end{equation}
Positive holomorphic bisectional curvature therefore means that
\eqref{eq:curvature-local-definition} is positive for every pair of
nonzero tangent vectors $v,u$ at the same point.
For a holomorphic disk $f$, the curvature endomorphism
$\mathcal R_f^\omega$ is characterized by
\begin{equation}
 \langle \mathcal R_f^\omega v,v\rangle_\omega
 =R^\omega(f_z,\overline{f_z},v,\bar v).
 \label{eq:curvature-endomorphism}
\end{equation}
The Bochner formula and the curvature formula for the determinant bundle~\cite[Ch.~VIII, proof of Lemma~7.2; Ch.~V, (4.6)]{DemaillyCADG2012}
give, for a holomorphic section $s$ of $f^*T_X$,
\begin{equation}
 \partial_z\partial_{\bar z}|s|_\omega^2
 =|\nabla_zs|_\omega^2-\langle \mathcal R_f^\omega s,s\rangle_\omega,
 \qquad
 f^*\Ric(\omega)=2\tr(\mathcal R_f^\omega)\,dA.
 \label{eq:bochner-ricci-convention}
\end{equation}
Set
\[
 C_\omega\coloneqq
 \max_{x\in X}
 \sup_{\substack{\xi,u\in T_xX\\
 |\xi|_\omega=|u|_\omega=1}}
 \left|R^\omega(\xi,\bar\xi,u,\bar u)\right|<\infty.
\]
Then
\[
 \|\mathcal R_f^\omega(z)\|_{\op}
 \leq C_\omega|f_z(z)|_\omega^2
 =C_\omega\varepsilon_f^\omega(z).
\]

For a frame $e=(e_1,\ldots,e_n)$, write
$G=(\langle e_j,e_k\rangle_\omega)$. Matrix inequalities refer to
Hermitian quadratic forms. The Green bound and Bochner identity control
$G$ throughout the disk and bound the frame's covariant Dirichlet integral.
Both estimates enter the lifted-field bounds in
Theorem~\ref{thm:uniform-lift}. For the two-sided metric control we apply
Berndtsson and Rosay's curvature-potential
estimate~\cite[Proposition~2.3]{BerndtssonRosay2003},
making the dependence on
$A_0,B_0$ explicit. The covariant Dirichlet bound then follows from
the integrated Bochner identity.

\begin{proposition}\label{prop:framebounds}
Let $(X,\omega)$ be a compact Hermitian complex $n$-fold, and fix
$A_0,B_0\geq0$. There are constants $\Gamma\geq1$ and $D_1<\infty$,
depending only on $(X,\omega,A_0,B_0)$, with the following property.
For any $R>0$ and any $f$ holomorphic on a neighborhood of
$\overline\D_R$ with $\Area_R(f)\leq A_0$ and $B_R(f)\leq B_0$,
every holomorphic frame $e$ of $f^*T_X$ that
is smooth on the closed disk and unitary on its boundary has Gram matrix
$G$ satisfying
\begin{equation}
 \Gamma^{-1}\Id_n\leq G\leq\Gamma\Id_n.
 \label{eq:Gamma}
\end{equation}
Moreover
\begin{equation}
 \sum_{a=1}^n\int_{\D_R}|\nabla_ze_a|^2dA
 \leq D_1.
 \label{eq:De}
\end{equation}
\end{proposition}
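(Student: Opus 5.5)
The plan is to study $\log\det G$ and the largest and smallest eigenvalues of $G$ through subharmonicity. We use the Green potential bound of Lemma~\ref{lem:green} to convert curvature into two-sided bounds, and then integrate the Bochner identity \eqref{eq:bochner-ricci-convention} to obtain \eqref{eq:De}. First rescale to $R=1$ as in the proof of Lemma~\ref{lem:green}; area, $B$, the Green integral and boundary unitarity are all dilation invariant. So $f$ may be taken holomorphic near $\overline\D$, with $G=\Id_n$ on $\partial\D$.

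\emph{Upper bound.} For a fixed unit vector $c\in\C^n$, the section $s=ec$ is holomorphic. By \eqref{eq:bochner-ricci-convention},
\[
\partial_z\partial_{\bar z}|s|^2\ge-\langle\mathcal R_f^\omega s,s\rangle\ge -C_\omega\varepsilon_f^\omega|s|^2 .
\]
Where $s\neq0$, the standard computation (Cauchy--Schwarz on $|\partial|s|^2|^2\le |s|^2|\nabla_z s|^2$) upgrades this to
\[
\partial_z\partial_{\bar z}\log|s|^2\ge -C_\omega\varepsilon_f^\omega ,
\]
and this inequality persists in the distributional sense across the zeros of $s$. Let $P(z)=\frac{2}{\pi}\int_\D g(z,w)\,C_\omega\varepsilon_f^\omega(w)\,dA(w)$, normalized so that $\partial_z\partial_{\bar z}P=-C_\omega\varepsilon_f^\omega$ with the paper's sign $-\Delta_w g=2\pi\delta$ (constants to be adjusted), and with $P=0$ on $\partial\D$ and $0\le P\le C\,C_g(A_0,B_0)$ by \eqref{eq:greenbound}. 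Then $\log|s|^2-P$ is subharmonic and vanishes on $\partial\D$, so $|ec|^2\le e^{P}\le e^{CC_g}$. Taking the supremum over $c$ gives $G\le\Gamma\Id_n$.

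\emph{Lower bound.} Apply the same argument to $\log\det G$. By \eqref{eq:normalization} and \eqref{eq:bochner-ricci-convention},
\[
\partial_z\partial_{\bar z}\log\det G=-\tfrac12\cdot\tfrac14\cdot\text{(Ricci density)},
\]
which is bounded in absolute value by $C'\varepsilon_f^\omega$. Since $\log\det G=0$ on the boundary, comparison with $\pm P$ gives $|\log\det G|\le C C_g$. Combining $\det G\ge e^{-CC_g}$ with the upper bound on the other $n-1$ eigenvalues bounds the smallest eigenvalue below by $e^{-CC_g}\Gamma_0^{-(n-1)}$. This yields \eqref{eq:Gamma}, and it is exactly the Berndtsson--Rosay curvature-potential estimate with the dependence on $A_0,B_0$ made explicit.

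\emph{Dirichlet bound.} Integrate the Bochner identity for $s=e_a$ over $\D$ and sum over $a$:
\[
\sum_a\int_\D|\nabla_ze_a|^2\,dA=\tfrac14\int_{\partial\D}\partial_\nu\tr G\,ds+\int_\D\tr(\mathcal R_f^\omega G)\,dA .
\]
The interior curvature term is at most $nC_\omega\Gamma A_0$. The boundary term equals $\frac14\int_{\partial\D}\partial_\nu\log\det G\,ds$, since $G=\Id_n$ there (as in Proposition~\ref{prop:matrixtrace}). By Green's formula this is $-\frac12\int_\D f^*\Ric(\omega)$, which is bounded by a constant times $A_0$ because $\omega$ is compact Hermitian. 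This gives $D_1$.

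The main obstacle I expect is the lower bound, specifically making the subharmonicity comparisons rigorous. Two points need care: $\log|s|^2$ is singular at zeros of $s$, which requires a distributional argument, and the Green potential must be normalized correctly so that the maximum principle applies on the closed disk with the stated signs. The derivation of $\log\det G$ bounds from the Ricci density for a merely Hermitian $\omega$ must also be checked; there the sign-indefinite Ricci form is handled through $|\cdot|\le C\varepsilon_f^\omega$ rather than through positivity.
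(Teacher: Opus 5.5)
Your proposal is correct. The Dirichlet estimate \eqref{eq:De} is obtained exactly as in the paper: you sum the Bochner identity and use $G=\Id_n$ with Jacobi's formula on $\partial\D_R$ to trade $\partial_\nu\tr G$ for $\partial_\nu\log\det G$. You then bound both the curvature term and the Ricci density by multiples of $\varepsilon_f^\omega$.

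Where you differ is the two-sided bound \eqref{eq:Gamma}. The paper builds the same Green potential: its $\phi$ satisfies $\phi_{z\bar z}=C_\omega\varepsilon_f^\omega$ and $\|\phi\|_\infty\le C_\omega C_g/\pi$. It then cites Berndtsson--Rosay's Proposition~2.3, which gives the symmetric bound $e^{-2\|\phi\|_\infty}\Id_n\le G\le e^{2\|\phi\|_\infty}\Id_n$. You instead prove the bound by hand, with the scalar maximum principle.

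Your upper bound, from subharmonicity of $\log|ec|^2-P$, is essentially that estimate written out directly. Your lower bound, comparing $\pm\log\det G$ with $nP$, is a different device. It uses the determinant identity $(\log\det G)_{z\bar z}=-\tr\mathcal R_f^\omega$, which is needed anyway for \eqref{eq:De}.

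This keeps the argument self-contained, but the constant becomes asymmetric. With $K=2C_\omega C_g/\pi$ you get $G\ge e^{-(2n-1)K}\Id_n$ rather than $e^{-K}\Id_n$. This is harmless, since only the existence of $\Gamma$ depending on $(X,\omega,A_0,B_0)$ is claimed. If you want the symmetric estimate, run your $\log|\cdot|^2$ argument on the dual frame of $f^*T_X^*$: its curvature obeys the same bound and its Gram matrix is also the identity on the boundary.

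Two small remarks. First, the obstacle you anticipate at zeros of $s$ does not arise. Since $e$ is a frame, $s=ec$ with $c\neq0$ never vanishes, so $\log|s|^2$ is smooth on the closed disk and no distributional argument is needed. Second, the correct identity is $\partial_z\partial_{\bar z}\log\det G=-\tfrac12$ times the Ricci density, not $-\tfrac18$ times it. You only use the resulting bound $|\partial_z\partial_{\bar z}\log\det G|\le nC_\omega\varepsilon_f^\omega$, so nothing downstream is affected.
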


Both $\Gamma$ and $D_1$ are independent of $R$, $f$, and the
choice of boundary-unitary frame.

\begin{proof}
Set
\[
 P(z)=\int_{\D_R}g_R(z,w)\varepsilon_f^\omega(w)\,dA(w),\qquad
 \phi(z)=\frac{C_\omega}{\pi}C_g(A_0,B_0)
              -\frac{2C_\omega}{\pi}P(z).
\]
The Green normalization and Lemma~\ref{lem:green} give
\[
 \phi_{z\bar z}=C_\omega\varepsilon_f^\omega,
 \qquad \|\phi\|_\infty\leq
          \frac{C_\omega}{\pi}C_g(A_0,B_0).
\]
In the holomorphic frame $e$, the metric is the identity on the
boundary, and its curvature satisfies
$|\langle\mathcal R_f^\omega v,v\rangle_\omega|
 \leq\phi_{z\bar z}|v|_\omega^2$.
Berndtsson--Rosay's estimate~\cite[Proposition~2.3]{BerndtssonRosay2003}, applied after dilation to
the unit disk, therefore bounds the metric between
$e^{-2\|\phi\|_\infty}\Id_n$ and
$e^{2\|\phi\|_\infty}\Id_n$.
Both $\tr G$ and $\tr(G^{-1})$ are at most
\[
 \Gamma\coloneqq n\exp\!\left(
        \frac{2C_\omega C_g(A_0,B_0)}{\pi}\right),
\]
which gives \eqref{eq:Gamma}.

It remains to bound the frame's covariant Dirichlet integral.
We first sum the Bochner identity; after integration, the boundary
determinant calculation from Proposition~\ref{prop:matrixtrace} will
identify its left-hand side:
\[
 \partial_z\partial_{\bar z}\tr G
 =\sum_a|\nabla_ze_a|^2-\tr(\mathcal R_f^\omega\mathsf S),
 \qquad \mathsf S\coloneqq \sum_ae_a\otimes e_a^\flat.
\]
Here $e_a^\flat(v)=\langle v,e_a\rangle_\omega$ denotes metric duality.
Since $G=\Id_n$ on $\partial\D_R$, Jacobi's formula gives
\[
 \partial_\nu\log\det G
 =\tr(G^{-1}\partial_\nu G)=\partial_\nu\tr G
 \quad\hbox{on }\partial\D_R.
\]
Consequently Green's formula and \eqref{eq:normalization} give
\[
 \int_{\D_R}\partial_z\partial_{\bar z}\tr G\,dA
 =\frac14\int_{\partial\D_R}\partial_\nu\log\det G\,ds
 =-\frac12\int_{\D_R}f^*\Ric(\omega).
\]
Integrating the Bochner identity and solving for the nonnegative
derivative term therefore gives
\[
 \sum_a\int_{\D_R}|\nabla_ze_a|^2dA
 =\int_{\D_R}\tr(\mathcal R_f^\omega\mathsf S)dA
  -\frac12\int_{\D_R} f^*\Ric(\omega).
\]
Now $\tr\mathsf S\leq n\Gamma$, while the absolute Ricci density is at most
$2nC_{\omega}\varepsilon_f^\omega$, since
$f^*\Ric=2\tr(\mathcal R_f^\omega)\,dA$. Taking absolute values and using
$\Area_R(f)\leq A_0$ gives
\[
 \sum_a\int_{\D_R}|\nabla_ze_a|^2dA
 \leq nC_\omega(\Gamma+1)A_0.
\]
$D_1\coloneqq nC_\omega(\Gamma+1)A_0$ is therefore an admissible choice
in \eqref{eq:De}.
\end{proof}

Recall the projective embedding $\iota:X\hookrightarrow\PP^{m-1}$
chosen at the beginning of this section. Let $\omega_{\rm FS}$ be the
Fubini--Study form on its target, with normalization
\[
 \omega_{\rm FS}=\frac{i}{2}\partial\bar\partial\log\sum_{j=1}^m|Z_j|^2.
\]
Choose $c_\iota>0$ such that $\iota^*\omega_{\rm FS}\leq c_\iota\omega$;
such a constant exists by compactness of $X$. The displayed
Fubini--Study formula is read in homogeneous coordinates
$Z=(Z_1,\ldots,Z_m)\neq0$; it is unchanged as a form on projective
space when the homogeneous representative is multiplied by a
nonvanishing holomorphic function.

In rank one, boundary normalization subtracts a harmonic function from
the logarithm of the metric, as in Berndtsson and Rosay~\cite[p.~886]{BerndtssonRosay2003}. Together with the classical
Fubini--Study potential~\cite[Ch.~VI, (4.4)]{DemaillyCADG2012}, this
gives homogeneous coordinates with the bounds needed for the corona
theorem. We record the area identity and the uniform lower bound.

\begin{lemma}\label{lem:homlift}
Let $(X,\omega)$ be a compact projective manifold with a smooth
Hermitian metric, and fix an embedding
$\iota:X\hookrightarrow\PP^{m-1}$ and bounds $A_0,B_0\geq0$.
For every $R>0$, a map $f:\D_R\to X$ holomorphic on a neighborhood
of $\overline\D_R$ with $\Area_R(f)\leq A_0$ and
$B_R(f)\leq B_0$ has a homogeneous lift $\mathbf s$ from an open
neighborhood of $\overline{\D}_R$ to $\C^m\setminus\{0\}$, holomorphic on
that neighborhood, with $[\mathbf s]=\iota\circ f$ and
$|\mathbf s|=1$ on the boundary. It satisfies
\begin{equation}
 |\mathbf s|\leq1,
 \qquad
 \int_{\D_R}|\mathbf s'|^2dA
 =\int_{\D_R}f^*\iota^*\omega_{\rm FS}
 \leq c_\iota A_0,
 \label{eq:sdir}
\end{equation}
and
\begin{equation}
 |\mathbf s(z)|\geq\delta_{\mathbf s}>0,
 \label{eq:slower}
\end{equation}
where
$\delta_{\mathbf s}=\delta_{\mathbf s}
(X,\omega,\iota,A_0,B_0)$ is independent of $R$ and $f$.
\end{lemma}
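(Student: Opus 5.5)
Cover $\iota(X)\subset\PP^{m-1}$ locally by homogeneous representatives, correct their norm by a harmonic function so that the boundary norm is one, and then bound $\log|\mathbf s|$ from below with a Green potential.

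\emph{Construction of the lift.} Since $\overline\D_R$ is contractible and Stein, the pullback $(\iota\circ f)^*\cO(-1)$ is holomorphically trivial on a neighborhood $U$ of $\overline\D_R$. A nonvanishing section gives a holomorphic $\mathbf s_0:U\to\C^m\setminus\{0\}$ with $[\mathbf s_0]=\iota\circ f$. The function $\log|\mathbf s_0|$ is smooth on $\partial\D_R$. Let $u$ be its harmonic extension to $\D_R$ and $\tilde u$ a harmonic conjugate; shrinking $U$ slightly, approximate the boundary data by data extending harmonically past the circle, or first dilate $R$ slightly and restrict. Then $\mathbf s\coloneqq e^{-(u+i\tilde u)}\mathbf s_0$ is holomorphic near $\overline\D_R$, represents the same projective map, and has $|\mathbf s|=1$ on $\partial\D_R$.

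\emph{Subharmonicity and the area identity.} The function $\varphi\coloneqq\log|\mathbf s|^2$ satisfies $\frac i2\partial\bar\partial\varphi=f^*\iota^*\omega_{\rm FS}$ by the Fubini--Study formula. With the normalization \eqref{eq:normalization} this reads $\frac14\Delta\varphi=\varepsilon^{\rm FS}$, where $\varepsilon^{\rm FS}$ denotes the Fubini--Study area density. Hence $\varphi$ is subharmonic with boundary value $0$, so $|\mathbf s|\le1$. Green's representation gives
\[
 \varphi(z)=-\frac{4}{2\pi}\int_{\D_R}g_R(z,w)\,\varepsilon^{\rm FS}(w)\,dA(w).
\]
Since $\varepsilon^{\rm FS}\le c_\iota\varepsilon_f^\omega$ and $g_R\ge0$, Lemma~\ref{lem:green} yields $\varphi\ge-\frac{2c_\iota}{\pi}C_g(A_0,B_0)$. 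This proves \eqref{eq:slower} with $\delta_{\mathbf s}=\exp\bigl(-c_\iota C_g(A_0,B_0)/\pi\bigr)$.

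For the Dirichlet identity, compute $\Delta|\mathbf s|^2=4|\mathbf s'|^2$, so that
\[
 \int_{\D_R}|\mathbf s'|^2\,dA=\frac14\int_{\partial\D_R}\partial_\nu|\mathbf s|^2\,ds .
\]
On the boundary $|\mathbf s|=1$, hence $\partial_\nu|\mathbf s|^2=\partial_\nu\varphi$ there. Green's formula then gives $\frac14\int_{\partial\D_R}\partial_\nu\varphi\,ds=\int_{\D_R}\frac14\Delta\varphi\,dA=\int_{\D_R}f^*\iota^*\omega_{\rm FS}\le c_\iota A_0$, which is \eqref{eq:sdir}.

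\emph{Main obstacle.} The only delicate step is producing a harmonic normalization that remains holomorphic past the closed disk. I would handle it by working on a slightly larger disk $\D_{R'}$ on which $f$ is still defined and normalizing there. Alternatively, one can note that the boundary data $\log|\mathbf s_0|$ is real-analytic, since $\mathbf s_0$ is holomorphic across $\partial\D_R$. Its harmonic extension therefore continues across the circle by Schwarz reflection, as Berndtsson--Rosay do in rank one.
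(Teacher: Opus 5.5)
Your proposal is correct and is essentially the paper's proof: Oka--Grauert trivialization of the pulled-back tautological bundle, harmonic normalization of $\log|\mathbf s_0|$, the maximum principle for $|\mathbf s|\le1$, Green's formula for the Dirichlet identity, and the Green representation combined with Lemma~\ref{lem:green}, which yields the same constant $\delta_{\mathbf s}=\exp\bigl(-c_\iota C_g(A_0,B_0)/\pi\bigr)$. The one point to tighten is the extension past $\overline\D_R$: of the options you list, only the real-analytic continuation of the harmonic extension works, and this is exactly what the paper uses; normalizing on a larger disk $\D_{R'}$, or approximating the boundary data, would lose the exact condition $|\mathbf s|=1$ on $\partial\D_R$, on which both the Dirichlet identity and the Green bound depend.
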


\begin{proof}
The tautological line bundle on $\PP^{m-1}$ has fiber $\C Z$ over
$[Z]$. Trivialize its pullback by $\iota\circ f$ on a slightly larger
disk, using the Oka--Grauert principle~\cite[Satz~6]{Grauert1958}, and choose a nonvanishing section,
represented by a lift $v$ in
$\C^m\setminus\{0\}$. The positive real-analytic boundary function
$|v|$ has a logarithm whose harmonic extension is the real part of a
holomorphic function $a$ on a slightly larger disk. Then
$\mathbf s\coloneqq e^{-a}v$ has boundary norm one, and the maximum
principle gives $|\mathbf s|\leq1$.

The area density of $\iota\circ f$ for the Fubini--Study metric is
$\varepsilon_f^{\rm FS}\coloneqq |(\iota\circ f)'|_{\rm FS}^2$, so that
$f^*\iota^*\omega_{\rm FS}=\varepsilon_f^{\rm FS}\,dA$. Set
$u\coloneqq\log|\mathbf s|^2$. The boundary normalization $|\mathbf s|=1$ gives $u=0$ and
$\partial_\nu u=\partial_\nu|\mathbf s|^2$ there. Since $\mathbf s$
has no common zero, the Fubini--Study formula gives
$\Delta u=4\varepsilon_f^{\rm FS}$, while holomorphicity gives
$\Delta|\mathbf s|^2=4|\mathbf s'|^2$. Green's formula therefore yields
\[
 \int_{\D_R}|\mathbf s'|^2dA
 =\frac14\int_{\partial\D_R}\partial_\nu|\mathbf s|^2ds
 =\frac14\int_{\partial\D_R}\partial_\nu u\,ds
 =\int_{\D_R}\varepsilon_f^{\rm FS}dA,
\]
which proves \eqref{eq:sdir}. Green representation for $u$ with zero
boundary values~\cite[Ch.~I, (4.5)]{DemaillyCADG2012} gives
\[
 -u(z)=\frac2\pi\int g_R(z,w)\varepsilon_f^{\rm FS}(w)dA(w)
 \leq\frac{2c_\iota C_g(A_0,B_0)}\pi.
\]
\eqref{eq:slower} therefore holds with
$\delta_{\mathbf s}=\exp(-c_\iota C_g(A_0,B_0)/\pi)$.
\end{proof}

Write $H^\infty(\D)$ for the bounded holomorphic functions with the
supremum norm, and let
\[
 \mathcal D(\D)\coloneqq
 \left\{h:\D\to\C\text{ holomorphic}:\int_\D|h'|^2dA<\infty\right\}.
\]
The Dirichlet algebra is
\[
 \mathfrak A\coloneqq H^\infty(\D)\cap\mathcal D(\D),
 \qquad
 \|h\|_{\mathfrak A}\coloneqq \|h\|_\infty+
 \left(\int_\D|h'|^2dA\right)^{1/2}.
\]
To construct the affine lift, we need a holomorphic dual vector $\chi$
with $\chi\cdot\mathbf s=1$ and bounds on both its supremum norm and
Dirichlet integral. Nicolau's theorem and the bound at the beginning of
its proof~\cite[pp.~135--136]{Nicolau1990} give the following statement.

\begin{theorem}\label{thm:nicolau}
For every $m\geq1$ and $A,\delta>0$ there is
$C_N(m,A,\delta)<\infty$ such that, whenever
$u_1,\ldots,u_m\in\mathfrak A$ satisfy
\[
 \|u_j\|_{\mathfrak A}\leq A,
 \qquad
 \max_j|u_j(z)|\geq\delta\quad(z\in\D),
\]
there are $v_1,\ldots,v_m\in\mathfrak A$ with
\[
 \sum_ju_jv_j=1,
 \qquad
 \|v_j\|_{\mathfrak A}\leq C_N(m,A,\delta).
\]
The same statement holds on every disk after dilation.
\end{theorem}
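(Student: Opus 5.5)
We would follow Wolff's $\bar\partial$-proof of the corona theorem, tracking a Dirichlet seminorm alongside the supremum norm. By dilation it suffices to treat $\D$. First we replace the $\max$ lower bound by $|u|^2\coloneqq\sum_j|u_j|^2\geq\delta^2$ and set
\[
 \varphi_j\coloneqq \frac{\bar u_j}{|u|^2},\qquad \sum_j u_j\varphi_j=1 .
\]
The $\varphi_j$ are smooth on $\D$, satisfy $|\varphi_j|\leq\delta^{-1}$, and $|\partial\varphi_j|+|\bar\partial\varphi_j|\leq C(m,\delta)\sum_k|u_k'|$. Hence $\int_\D(|\partial\varphi_j|^2+|\bar\partial\varphi_j|^2)\,dA\leq C(m,A,\delta)$. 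We then seek the solution in the form
\[
 v_j=\varphi_j+\sum_k a_{jk}u_k,\qquad a_{jk}=-a_{kj},\qquad
 \bar\partial a_{jk}=\varphi_j\bar\partial\varphi_k-\varphi_k\bar\partial\varphi_j\eqqcolon g_{jk}.
\]
Antisymmetry gives $\sum_ju_jv_j=1$, and the equation for $a_{jk}$ makes $v_j$ holomorphic, because $\sum_k u_k\bar\partial\varphi_k=0$.

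The $H^\infty$ bound is the classical step. The data $g_{jk}$ satisfy $|g_{jk}|\lesssim\sum_k|u_k'|$, so $|g_{jk}|^2(1-|z|)\,dA$ and $|\partial g_{jk}|(1-|z|)\,dA$ are Carleson measures with constants controlled by $\|u\|_\infty\leq A$ and $\delta$. This uses the standard fact that $|h'|^2(1-|z|)\,dA$ is Carleson with constant $\lesssim\|h\|_\infty^2$ for $h\in H^\infty$. Wolff's lemma then yields solutions with $\|a_{jk}\|_{L^\infty(\partial\D)}\leq C(m,A,\delta)$. Since $v_j$ is holomorphic and bounded on $\partial\D$, we get $\|v_j\|_\infty\leq C(m,A,\delta)$.

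For the Dirichlet part we pass to boundary values. A bounded holomorphic $v$ has $\int_\D|v'|^2\,dA=\pi\sum_n n|\hat v(n)|^2$, which is comparable to the Douglas $H^{1/2}$ seminorm of $v|_{\partial\D}$. The space $L^\infty\cap H^{1/2}(\partial\D)$ is an algebra, with
\[
 \|fg\|_{H^{1/2}}\leq\|f\|_\infty\|g\|_{H^{1/2}}+\|g\|_\infty\|f\|_{H^{1/2}} .
\]
It is also stable under composition with smooth functions on the region $|w|\geq\delta$. Therefore $\varphi_j|_{\partial\D}$ and $u_k|_{\partial\D}$ lie in it with norms $\leq C(m,A,\delta)$. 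It then remains to bound $\|a_{jk}|_{\partial\D}\|_{H^{1/2}}$. For that we would use the identity
\[
 \|a|_{\partial\D}\|_{H^{1/2}}^2\lesssim\int_\D(|\partial a|^2+|\bar\partial a|^2)\,dA
\]
for a suitable extension of $a$. Since $\bar\partial a_{jk}=g_{jk}\in L^2$ with norm controlled, it remains to control $\partial a_{jk}$ in $L^2$.

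\textbf{Main obstacle.} The $L^2$ control of $\partial a_{jk}$ is the step we expect to be hardest: the same solution $a_{jk}$ must be both bounded on $\partial\D$ and of finite Dirichlet energy. Wolff's solution is the Cauchy transform of the data plus a holomorphic correction, and the Beurling transform controls $\partial$ of the Cauchy-transform part in $L^2$. The holomorphic correction in Wolff's construction is an explicit Poisson/Carleson-type integral. Our first attempt would be to verify, as in Nicolau's argument, that its Dirichlet integral is bounded by the $L^2$ norms of $g_{jk}$ and $\partial g_{jk}$ weighted by $1-|z|$. These weighted norms are in turn controlled by $\int_\D|u_k'|^2\,dA\leq A^2$ and the $H^\infty$ bounds. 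Combining this with the algebra property gives $\|v_j\|_{\mathfrak A}\leq C_N(m,A,\delta)$. The dilation statement follows from the invariance of both norms under $z\mapsto Rz$.
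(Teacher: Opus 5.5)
Your setup is sound as far as it goes. The Koszul form $v_j=\varphi_j+\sum_k a_{jk}u_k$ with antisymmetric $a_{jk}$ is correct. So is Wolff's lemma for the $L^\infty(\partial\D)$ bound, after the usual reduction to generators holomorphic on $\overline\D$ and a normal-families limit, which you should state. The reduction of the Dirichlet bound to an $H^{1/2}$ estimate for $a_{jk}|_{\partial\D}$, via the algebra property of $L^\infty\cap H^{1/2}(\partial\D)$ and the trace inequality, is also fine. But the step you call the main obstacle is the entire content of the theorem beyond Carleson--Wolff. For each pair $j<k$ you need one solution of $\bar\partial a_{jk}=g_{jk}$ that is simultaneously bounded on $\partial\D$ and of finite Dirichlet energy, with bounds depending only on $(m,A,\delta)$. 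The proposal does not prove this; it defers to ``Nicolau's argument''.

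Moreover, the route you sketch for it does not work as described. In Wolff's argument the bounded solution is $b_0-h$, where $b_0$ is a particular solution (for example the Cauchy transform) and $h\in H^\infty$ comes from the duality $L^\infty/H^\infty\cong(H^1_0)^*$, that is, from Hahn--Banach. It is not an explicit Poisson/Carleson integral whose Dirichlet integral you could estimate. Nothing in that argument keeps $b_0-h$ in $H^{1/2}$ on $\partial\D$, so Beurling-transform control of $\partial b_0$ says nothing about $\partial(b_0-h)$. Your claimed bound of the correction's Dirichlet integral by the $(1-|z|)$-weighted norms of $g_{jk}$ and $\partial g_{jk}$ is asserted without justification. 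Closing the gap would require either a constructive bounded solution operator (such as Jones's) together with a genuinely new Dirichlet estimate for it, or a duality argument carried out in the intersection space.

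The paper attempts none of this. It takes Nicolau's quantitative corona estimate as a black box and only does bookkeeping:
\begin{itemize}
\item Nicolau's norm uses $\frac1\pi\int|h'|^2$, so $\|h\|_{\mathrm N}\le\|h\|_{\mathfrak A}\le\sqrt\pi\,\|h\|_{\mathrm N}$.
\item Apply the estimate to $u_j/A$, which have Nicolau norm at most $1$ and corona bound $\delta/A$, and set $v_j=b_j/A$.
\item Both the sup norm and the Dirichlet integral are invariant under $z\mapsto Rz$, which gives the dilated statement.
\end{itemize}
If you are willing to cite Nicolau for the hard step, as your proposal effectively does, this normalization is the whole proof and the Wolff machinery is unnecessary. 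If you are not, the proposal is missing its key lemma.
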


\begin{proof}
The case $\delta>A$ is vacuous; assume $\delta\leq A$.
Nicolau uses the norm
\[
 \|h\|_{\mathrm N}\coloneqq\|h\|_\infty+
 \left(\frac1\pi\int_\D|h'|^2dA\right)^{1/2},
 \qquad
 \|h\|_{\mathrm N}\leq\|h\|_{\mathfrak A}
 \leq\sqrt\pi\,\|h\|_{\mathrm N}.
\]
His corona condition is the same maximum lower bound as above~\cite[p.~135, (1)]{Nicolau1990}.
The generators $u_j/A$ therefore have Nicolau norm at most $1$
and lower bound $\delta/A$. The estimate at the beginning of his proof~\cite[p.~136]{Nicolau1990} gives solutions $b_j$ with
$\sum_j(u_j/A)b_j=1$ and
$\|b_j\|_{\mathrm N}\leq K_m(\delta/A)$, where $K_m$ depends only on
the indicated data. Setting $v_j\coloneqq b_j/A$ gives
$\sum_j u_jv_j=1$ and
\[
 \|v_j\|_{\mathfrak A}\leq
 \frac{\sqrt\pi}{A}K_m(\delta/A),
\]
which is an admissible choice of $C_N(m,A,\delta)$.
For a disk of radius $R$,
\[
 \|h(R\,\cdot)\|_{\infty,\D}=\|h\|_{\infty,\D_R},\qquad
 \int_\D|(h(R\zeta))'|^2\,dA(\zeta)
 =\int_{\D_R}|h'(z)|^2\,dA(z),
\]
so the same bound applies after dilation.
\end{proof}

For a normalized lift $\mathbf s$ from Lemma~\ref{lem:homlift}, its
coordinates have algebra norm at most
$C_{\mathbf s}\coloneqq 1+\sqrt{c_\iota A_0}$, and at least one
has modulus at least $\delta_{\mathbf s}/\sqrt m$ at every point,
because $|\mathbf s|^2\leq m\max_j|\mathbf s_j|^2$.
Theorem~\ref{thm:nicolau} gives a holomorphic map
$\chi:\D_R\to(\C^m)^*$, whose components belong to the
algebra on $\D_R$ obtained from $\mathfrak A$ by dilation, satisfying
\begin{equation}
 \chi\cdot\mathbf s=1,
 \qquad
 \|\chi\|_\infty+\|\chi'\|_2\leq C_{\chi}
 \label{eq:coronamate}
\end{equation}
for a constant
$C_{\chi}=C_{\chi}
(X,\omega,\iota,A_0,B_0)$.
The dot denotes the bilinear pairing
$\chi\cdot\mathbf s=\sum_{j=1}^m\chi_j\mathbf s_j$, without complex
conjugation. The constant absorbs the fixed dimension factors in
passing from scalar to vector norms.

To extend the lift past $\overline\D_R$, dilate $\chi$ radially while
keeping $\mathbf s$ fixed, then restore their pairing by division.
For $0<\rho<1$ sufficiently close to $1$, set
\[
 \chi_\rho(z)\coloneqq\chi(\rho z),\qquad
 \gamma_\rho\coloneqq\chi_\rho\cdot\mathbf s,\qquad
 \widehat\chi\coloneqq\frac{\chi_\rho}{\gamma_\rho}.
\]
Since $\chi(\rho z)\cdot\mathbf s(\rho z)=1$, we have
\[
 \gamma_\rho(z)-1
 =\chi(\rho z)\cdot\bigl(\mathbf s(z)-\mathbf s(\rho z)\bigr).
\]
The fixed lift $\mathbf s$ is uniformly continuous on $\overline\D_R$,
and $\chi$ is bounded. So $\gamma_\rho\to1$ uniformly there as
$\rho\uparrow1$, and we may require $|\gamma_\rho|\geq1/2$.
The quotient is therefore holomorphic near $\overline\D_R$ and satisfies
$\widehat\chi\cdot\mathbf s=1$ and $\|\widehat\chi\|_\infty\leq2C_\chi$.
Dilation gives $\|\chi_\rho'\|_2\leq C_\chi$; the product and quotient rules give
\begin{equation}
 \begin{aligned}
 \|\gamma_\rho'\|_2&\leq C_\chi(1+\sqrt{c_\iota A_0}),\\
 \|\widehat\chi'\|_2
 &\leq2C_\chi+4C_\chi^2(1+\sqrt{c_\iota A_0})\eqqcolon D_\chi.
 \end{aligned}
 \label{eq:Dchi}
\end{equation}
All norms here are on $\D_R$. The bound depends only on the fixed target,
embedding, and $A_0,B_0$; the base lift $\mathbf s$ is unchanged.

\subsection{Uniform lifts and holomorphic variations}

The affine model below is a homogeneous version of the classical
incidence-complement construction known as the Jouanolou trick;
see~\cite[\S2.6, p.~771]{Forstneric2013Survey}.
The pairs $(\mathbf s,\chi)$ lie in this fixed model, whose defining
equation $\chi\cdot\mathbf s=1$ excludes the cone vertex, and a
holomorphic right inverse lifts tangent fields into it. The model
depends only on the embedding; the area and weighted-derivative caps
determine a compact subset and a retraction near it.

\begin{lemma}
\label{lem:fixed-affine-lift}
Let $X$ be a smooth projective manifold, and fix a projective embedding
$\iota:X\hookrightarrow\PP^{m-1}$. There are a closed Stein submanifold
$\mathcal Y\subset\C^{2m}$, a holomorphic submersion
$\pi_X:\mathcal Y\to X$, and a holomorphic bundle map
$\mathcal L:\pi_X^*T_X\to T_{\mathcal Y}$ such that
$d\pi_X\circ\mathcal L=\Id$, all depending only on $\iota$.
\end{lemma}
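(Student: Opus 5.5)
The plan is to use the Jouanolou-type incidence construction suggested by the surrounding text. Set
\[
 \mathcal Y\coloneqq\{(Z,W)\in\C^m\times(\C^m)^*:\ W\cdot Z=1,\ [Z]\in\iota(X)\},
\]
where $W\cdot Z=\sum_j W_jZ_j$. Define $\pi_X(Z,W)=\iota^{-1}[Z]$, identifying $(\C^m)^*$ with $\C^m$ so that $\mathcal Y\subset\C^{2m}$.

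\textbf{Step 1: $\mathcal Y$ is a closed submanifold.} It is closed, since it is the intersection of the affine cone $\widehat X\subset\C^m$ over $\iota(X)$ (times $\C^m$) with the quadric $\{W\cdot Z=1\}$. That quadric excludes the vertex $Z=0$. Away from the vertex, $\widehat X\setminus\{0\}$ is a smooth $\C^*$-bundle over $X$, hence smooth of dimension $n+1$. The equation $W\cdot Z=1$ has nonzero differential in the $W$ directions whenever $Z\ne0$, so it cuts $(\widehat X\setminus\{0\})\times\C^m$ transversally. Hence $\mathcal Y$ is a smooth closed complex submanifold of $\C^{2m}$, and in fact an affine algebraic variety. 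Closed submanifolds of $\C^{2m}$ are Stein.

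\textbf{Step 2: $\pi_X$ is a holomorphic submersion.} The map $(Z,W)\mapsto[Z]$ factors as a restriction of the tautological projection $\C^m\setminus\{0\}\to\PP^{m-1}$. The fiber of $\pi_X$ over $x$ is $\{(Z,W): Z\in\ell_x\setminus\{0\},\ W\cdot Z=1\}$, where $\ell_x$ is the line over $\iota(x)$. This fiber is an affine bundle over $\C^*$, of dimension $1+(m-1)=m$, and the fibers vary smoothly with $x$. Concretely, $\pi_X$ is the composite of the submersion $\mathcal Y\to\widehat X\setminus\{0\}$, $(Z,W)\mapsto Z$, with the $\C^*$-bundle projection. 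The first map is surjective on tangent spaces because $W$ ranges over an affine hyperplane depending smoothly on $Z$. So $\pi_X$ is a submersion.

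\textbf{Step 3: the splitting $\mathcal L$.} This is the only step with content, and I would argue it globally rather than patching local sections. The kernel $\mathcal K=\ker d\pi_X$ is a holomorphic subbundle of $T_{\mathcal Y}$, giving the exact sequence
\[
 0\to\mathcal K\to T_{\mathcal Y}\to\pi_X^*T_X\to0.
\]
Its extension class lies in $H^1(\mathcal Y,\operatorname{Hom}(\pi_X^*T_X,\mathcal K))$. This group vanishes by Cartan's Theorem B, because $\mathcal Y$ is Stein and the coefficient sheaf is coherent. The sequence therefore splits holomorphically, and any splitting $\mathcal L$ satisfies $d\pi_X\circ\mathcal L=\Id$.

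If more explicit control is wanted later, an alternative is to restrict the Hermitian orthogonal complement of $\mathcal K$. That gives only a smooth splitting, so the Theorem B argument is the one to use. The data $\mathcal Y$, $\pi_X$ and $\mathcal L$ are chosen once from $\iota$ alone.

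The main obstacle I anticipate is mild: checking transversality at every point and confirming that the fibers are connected is not needed. Lemma~\ref{lem:fixed-affine-lift} requires only smoothness, closedness, the submersion property and the splitting. Uniformity on compact subsets of $\mathcal Y$ then follows automatically from continuity of $\mathcal L$.
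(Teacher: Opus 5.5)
Your proof is correct. Steps 1 and 2 agree with the paper in substance; the paper reads off local coordinates on the charts $\mathbf s_j\neq0$ where you argue by transversality, but the content is the same. The genuine difference is Step 3, the construction of $\mathcal L$.

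The paper does not use cohomology. It observes that
\[
 \Theta:\mathcal Y\to\mathcal U\coloneqq\{(x,\chi)\in X\times(\C^m)^*:\chi|_{\ell_x}\neq0\},
 \qquad (\mathbf s,\chi)\mapsto(\pi_X(\mathbf s,\chi),\chi),
\]
is biholomorphic. Its inverse is $(x,\chi)\mapsto\bigl(s_0(x)/\chi(s_0(x)),\chi\bigr)$ for any local nonvanishing section $s_0$ of the tautological line bundle. So $\mathcal Y$ is an open subset of a product, and $\pi_X$ becomes the first projection. The horizontal vectors $(v,0)$ then give a canonical splitting $\mathcal L_y(v)=d(\Theta^{-1})_{\Theta(y)}(v,0)$.

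Your Theorem~B argument is more general: it splits $0\to\ker d\pi_X\to T_{\mathcal Y}\to\pi_X^*T_X\to0$ for any holomorphic submersion from a Stein manifold, with no use of the incidence structure. The cost is that it is non-constructive, relies on a deep theorem, and involves an arbitrary choice of splitting. That choice is harmless here. It is made once from $\iota$, and later (Theorem~\ref{thm:uniform-lift}(iii)) only finiteness of $\sup_K\|\mathcal L\|$ and $\sup_K\|\nabla\mathcal L\|$ on compact $K\subset\mathcal Y$ is needed, which holds for any holomorphic $\mathcal L$.

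The paper's route is elementary and choice-free. It also gives the extra structural information that $\mathcal Y\cong\mathcal U$ is open in $X\times(\C^m)^*$. This makes the lifted frame $\theta_a=\mathcal L_{\widetilde f}e_a$ explicit, namely "move in $X$, keep $\chi$ fixed."
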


\begin{proof}
Consider the affine cone of the embedding,
\[
 \widehat X\coloneqq\{0\}\cup
 \{\mathbf s\in\C^m\setminus\{0\}:[\mathbf s]\in\iota(X)\},
\]
and set
\begin{equation}
 \mathcal Y\coloneqq
 \{(\mathbf s,\chi)\in\widehat X\times(\C^m)^*:
   \chi\cdot\mathbf s=1\},
 \qquad
 \pi_X(\mathbf s,\chi)\coloneqq \iota^{-1}[\mathbf s].
 \label{eq:lift-space}
\end{equation}
The defining equation excludes the cone vertex. On a chart
$\mathbf s_j\neq0$, local coordinates consist of coordinates on $X$, a
nonzero scale factor, and $m-1$ unconstrained dual coordinates. Hence
$\mathcal Y$ is a smooth closed affine algebraic variety and therefore
Stein, while $\pi_X$ is a holomorphic submersion.

For the holomorphic splitting, set
$V\coloneqq\C^m$ and let
$\ell\coloneqq\iota^*\mathcal O_{\PP^{m-1}}(-1)$ be the tautological
line bundle on $X$; its fiber $\ell_x\subset V$ is the line representing
$\iota(x)$. Define
\[
 \mathcal U\coloneqq
 \{(x,\chi)\in X\times V^*: \chi|_{\ell_x}\neq0\}.
\]
The map
\[
 \Theta:\mathcal Y\longrightarrow\mathcal U,
 \qquad (\mathbf s,\chi)\longmapsto(\pi_X(\mathbf s,\chi),\chi),
\]
is biholomorphic. Indeed, if $s_0$ is a local nonvanishing holomorphic
section of $\ell$, then
\[
 \sigma(x,\chi)\coloneqq\frac{s_0(x)}{\chi(s_0(x))}
\]
is independent of the choice of $s_0$, is holomorphic, lies in
$\ell_x$, and satisfies $\chi(\sigma(x,\chi))=1$. Thus
\[
 \Theta^{-1}(x,\chi)=(\sigma(x,\chi),\chi).
\]
Since $\mathcal U$ is open in $X\times V^*$, its tangent space at
$(x,\chi)$ is $T_xX\oplus V^*$. Transporting the vectors $(v,0)$ by
$d\Theta^{-1}$ gives the required holomorphic right inverse
\begin{equation}
 \mathcal L:\pi_X^*T_X\to T_{\mathcal Y},
 \qquad
 \mathcal L_y(v)\coloneqq d(\Theta^{-1})_{\Theta(y)}(v,0),
 \qquad d\pi_X\circ\mathcal L=\Id.
 \label{eq:L}
\end{equation}
\end{proof}

The holomorphic variations are built from three pieces of data: a
controlled boundary-unitary frame, a lift of the disk, and lifts of the
frame vectors. Under fixed area and weighted-derivative caps, the lifted
disks lie in a common compact subset of the fixed affine model, and
the estimates for all three ingredients are uniform.

\begin{theorem}
\label{thm:uniform-lift}
Let $(X,\omega)$ be a compact projective manifold of complex dimension
$n\geq1$ with a smooth
Hermitian metric, and fix a projective embedding
$\iota:X\hookrightarrow\PP^{m-1}$.
Fix the affine model $\mathcal Y$, the submersion $\pi_X$, and the
right inverse $\mathcal L$ supplied by
Lemma~\ref{lem:fixed-affine-lift}. Given $A_0,B_0\geq0$, one can choose a compact set
$K_{A_0,B_0}\Subset\mathcal Y$, an open
neighborhood $\Omega$ of $K_{A_0,B_0}$ in $\C^{2m}$,
a holomorphic map $\widetilde\pi_X:\Omega\to X$ with
$\widetilde\pi_X|_{\mathcal Y\cap\Omega}=\pi_X$, and constants
\[
 \Gamma,D_1,D_{\widetilde f},C_{\theta,\infty},D_\theta<\infty.
\]
These cap-dependent choices depend only on
$(X,\omega,\iota,A_0,B_0)$ and are independent of the
source radius and the individual disk; the affine model and its
splitting depend only on $\iota$. For every $R>0$ and every map $f$
holomorphic on a neighborhood of $\overline{\D}_R$ with
$\Area_R(f)\leq A_0$ and $B_R(f)\leq B_0$, one can choose a frame
$e$, a lifted disk $\widetilde f$, and a matrix $\theta$ compatibly so
that the following properties hold:
\begin{enumerate}
\renewcommand{\labelenumi}{\textup{(\roman{enumi})}}
\item $e$ is a holomorphic frame of $f^*T_X$, smooth on the closed disk
and unitary on its boundary, whose Gram matrix
$G(z)\coloneqq(\langle e_j(z),e_k(z)\rangle_\omega)_{j,k=1}^n$ satisfies
\[
 \Gamma^{-1}\Id_n\leq G\leq\Gamma\Id_n,
 \qquad
 \sum_{a=1}^n\int_{\D_R}|\nabla_ze_a|^2dA\leq D_1;
\]
\item $\widetilde f$ is a map from a neighborhood of
$\overline{\D}_R$ to $\mathcal Y$, holomorphic on that
neighborhood, such that
\[
 \pi_X\circ\widetilde f=f,
 \quad \widetilde f(\overline{\D}_R)
       \subset K_{A_0,B_0},
 \quad \int_{\D_R}|\widetilde f'|^2dA\leq D_{\widetilde f};
\]
\item the frame vectors have holomorphic lifts
$\theta_1,\ldots,\theta_n:\D_R\to\C^{2m}$ along $\widetilde f$.
Their column matrix and the lifting identities are
\begin{equation}
 \begin{gathered}
 \theta\coloneqq(\theta_1,\ldots,\theta_n):
 \D_R\to\operatorname{Mat}_{2m\times n}(\C),\\
 d(\widetilde\pi_X)_{\widetilde f(z)}\theta_a(z)=e_a(z)
 \qquad(1\leq a\leq n).
 \end{gathered}
 \label{eq:theta-lift-definition}
\end{equation}
The matrix $\theta$ is smooth on the closed disk and satisfies
\[
 \sup_{z\in\D_R}\norm{\theta(z)}_{\op}\leq C_{\theta,\infty},
 \qquad \int_{\D_R}\norm{\theta'}_{\HS}^2dA\leq D_\theta.
\]
\end{enumerate}
\end{theorem}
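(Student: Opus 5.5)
The plan is to assemble the three ingredients from the estimates already in place and then check that the lifted data stay in a fixed compact subset of $\mathcal Y$.

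\emph{Frame.} For (i), dilate to the unit disk and apply Lemma~\ref{lem:factor} to the Gram matrix of any holomorphic frame of $f^*T_X$ on a slightly larger disk. Such a frame exists because bundles over disks are trivial. The result is a boundary-unitary frame $e$. Proposition~\ref{prop:framebounds} then supplies $\Gamma$ and $D_1$ directly.

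\emph{Lifted disk.} For (ii), take the normalized homogeneous lift $\mathbf s$ of Lemma~\ref{lem:homlift} and the corona mate $\widehat\chi$ constructed after \eqref{eq:coronamate}. Set $\widetilde f=(\mathbf s,\widehat\chi)$. Then $\widehat\chi\cdot\mathbf s=1$, so $\widetilde f$ maps a neighborhood of $\overline\D_R$ into $\mathcal Y$, and $\pi_X\circ\widetilde f=f$.

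We have $|\mathbf s|\le1$ and $\|\widehat\chi\|_\infty\le2C_\chi$, so the image lies in the closed subset
\[
 K_{A_0,B_0}\coloneqq\mathcal Y\cap\{|\mathbf s|\le1,\ |\chi|\le2C_\chi\}.
\]
This set is compact because $\mathcal Y$ is closed in $\C^{2m}$. The Dirichlet bound is the sum of \eqref{eq:sdir} and \eqref{eq:Dchi}, giving $D_{\widetilde f}=c_\iota A_0+D_\chi^2$.

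Next, $\mathcal Y$ is a closed Stein submanifold of $\C^{2m}$. The Docquier--Grauert theorem therefore gives a holomorphic retraction $r$ from a neighborhood of $\mathcal Y$ onto $\mathcal Y$. Choose a relatively compact neighborhood $\Omega$ of $K_{A_0,B_0}$ inside the domain of $r$, and set $\widetilde\pi_X=\pi_X\circ r$. This map restricts to $\pi_X$ on $\mathcal Y\cap\Omega$.

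\emph{Lifted frame vectors.} For (iii), set $\theta_a(z)=\mathcal L_{\widetilde f(z)}(e_a(z))$, viewed in $\C^{2m}$. On $\mathcal Y$ the retraction is the identity, so $d\widetilde\pi_X$ agrees with $d\pi_X$ on $T\mathcal Y$. Hence $d\pi_X\circ\mathcal L=\Id$ gives the lifting identity \eqref{eq:theta-lift-definition}. Holomorphy and smoothness up to the boundary are inherited from $e$ and $\widetilde f$.

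For the sup bound, $\mathcal L$ is a continuous bundle map, so its operator norm is bounded on the compact set $K_{A_0,B_0}$ by a constant $C_{\mathcal L}$. Also $|e_a|\le\Gamma^{1/2}$ by \eqref{eq:Gamma}. This gives $C_{\theta,\infty}$.

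\emph{Main obstacle: the Dirichlet bound for $\theta$.} Differentiating gives
\[
 \theta_a'=(D\mathcal L)(\widetilde f')\,e_a+\mathcal L(\partial_z e_a),
\]
where $\partial_z e_a$ must be interpreted in coordinates. The first term is handled by the bound on $D\mathcal L$ over $K_{A_0,B_0}$, together with $|e_a|\le\Gamma^{1/2}$ and the bound on $\int|\widetilde f'|^2$.

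The second term needs coordinate derivatives of $e_a$, but Proposition~\ref{prop:framebounds} only controls the covariant derivatives $\nabla_z e_a$. The way around this is to work in a fixed trivialization of $T_X$ near $X$, for instance via the embedding or a finite atlas. In such a trivialization, $\partial_z e_a=\nabla_z e_a-\Gamma_\omega(f_z)e_a$, where $\Gamma_\omega$ denotes the Chern connection form of $\omega$ in that trivialization. The connection form is bounded on compact $X$, so
\[
 |\Gamma_\omega(f_z)e_a|^2\lesssim \Gamma\,\varepsilon_f^\omega,
\]
whose integral is at most $\Gamma A_0$.

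A cleaner way to do this globally, without gluing charts, is to write $\mathcal L\circ e_a$ as a single holomorphic map into $\C^{2m}$: compose with the bundle map $T_X\to\C^{2m}$ induced by $\mathcal L$ along the lift. Its derivative then splits into a covariant piece and a zeroth-order piece proportional to $\widetilde f'$ and $f_z$, both controlled by $D_1$, $\Gamma$, $A_0$ and $D_{\widetilde f}$. This yields $D_\theta$, which completes the plan.
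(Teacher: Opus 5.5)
Your proposal is correct and follows the paper's proof essentially step for step: Lemma~\ref{lem:factor} plus Proposition~\ref{prop:framebounds} for the frame, the lift $\widetilde f=(\mathbf s,\widehat\chi)$ with $K_{A_0,B_0}=\mathcal Y\cap\{|\mathbf s|\le1,\ |\chi|\le2C_\chi\}$ and a Docquier--Grauert retraction for $\widetilde\pi_X$, and $\theta_a=\mathcal L_{\widetilde f}e_a$. Your ``cleaner global'' treatment of the Dirichlet bound is exactly the paper's identity $\theta_a'=\mathcal L_{\widetilde f}\nabla_ze_a+(\nabla_{\widetilde f'}\mathcal L)e_a$ (pullback Chern connection on $\pi_X^*T_X$, flat connection on $\C^{2m}$), bounded using $\sup_K\|\mathcal L\|$ and $\sup_K\|\nabla\mathcal L\|$. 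One small convention point: with $|ec|_\omega^2=c^{\mathsf T}G\bar c$, the frame $e=\widetilde e\,A(z/R)$ has Gram matrix $A^{\mathsf T}\widetilde G\bar A$, so the factorization lemma must be applied to $\overline{\widetilde G(R\zeta)}$ rather than to $\widetilde G$ itself.
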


\begin{proof}
Fix a disk $f$ satisfying the caps $A_0,B_0$.

\par\smallskip\noindent\textup{(i)} Choose $R'>R$ so that $\overline{\D}_{R'}$ lies in the given source
neighborhood, and choose a holomorphic frame $\widetilde e$ of $f^*T_X$
on $\D_{R'}$; such a frame exists
by the Oka--Grauert principle~\cite[Satz~6, p.~270]{Grauert1958}, since the source is
contractible and Stein. Restrict its Gram matrix
to $\partial\D_R$ and apply Lemma~\ref{lem:factor} to
$H(\zeta)=\overline{\widetilde G(R\zeta)}$.
For the resulting factor $A$, set
$e(z)\coloneqq\widetilde e(z)A(z/R)$. Conjugating the factorization
identity gives its boundary Gram matrix
$A(z/R)^{\mathsf T}\widetilde G(z)\overline{A(z/R)}=\Id_n$.
$e$ is therefore holomorphic, smooth
on the closed disk, and unitary on its boundary.
Proposition~\ref{prop:framebounds} gives
\eqref{eq:Gamma} and \eqref{eq:De}, proving \textup{(i)}.

\par\smallskip\noindent\textup{(ii)} By Lemma~\ref{lem:homlift}, $f$ has a homogeneous lift $\mathbf s$ with
\[
 |\mathbf s|\leq1,\qquad |\mathbf s|\geq\delta_{\mathbf s},\qquad
 \int_{\D_R}|\mathbf s'|^2dA\leq c_\iota A_0.
\]
Its coordinates satisfy the hypotheses of Theorem~\ref{thm:nicolau}
with algebra-norm bound $C_{\mathbf s}$ and corona lower bound
$\delta_{\mathbf s}/\sqrt m$. Thus \eqref{eq:coronamate} supplies a vector $\chi$ satisfying $\chi\cdot\mathbf s=1$, and the dilation construction leading to \eqref{eq:Dchi} gives a vector
$\widehat\chi$, holomorphic
near the closed disk, with
\[
 \widehat\chi\cdot\mathbf s=1,\qquad
 \|\widehat\chi\|_{L^\infty(\D_R)}\leq2C_{\chi},\qquad
 \|\widehat\chi'\|_{L^2(\D_R)}\leq D_\chi.
\]

The identity $\widehat\chi\cdot\mathbf s=1$ shows that
$\widetilde f\coloneqq(\mathbf s,\widehat\chi)$ takes values in the
fixed space $\mathcal Y$ of \eqref{eq:lift-space}. It satisfies
\begin{equation}
 \widetilde f(\overline{\D}_R)\subset
 K_{A_0,B_0}\coloneqq
 \mathcal Y\cap
 \{|\mathbf s|\leq1,\ |\chi|\leq2C_{\chi}\},
 \qquad
 \int_{\D_R}|\widetilde f'|^2dA
 \leq c_\iota A_0+D_\chi^2\eqqcolon D_{\widetilde f}.
 \label{eq:lifted-disk-bound}
\end{equation}
The set $K_{A_0,B_0}$ is compact: it is closed and
bounded, and $\chi\cdot\mathbf s=1$ prevents $\mathbf s$ from approaching
the cone vertex.

Since $\mathcal Y$ is a closed Stein submanifold of $\C^{2m}$,
the Docquier--Grauert tubular neighborhood theorem~\cite[Satz~3]{DocquierGrauert1960}, in the holomorphic
deformation-retraction formulation~\cite[Theorem~3.1]{Forstneric2010Stratified},
gives a holomorphic retraction $\mathfrak r:\Omega\to\mathcal Y\cap\Omega$
from a neighborhood $\Omega$ of $K_{A_0,B_0}$, equal to the identity on
$\mathcal Y\cap\Omega$. Its derivatives are bounded on smaller compact
neighborhoods of $K_{A_0,B_0}$. Set
$\widetilde\pi_X\coloneqq\pi_X\circ\mathfrak r$. Since $\pi_X\circ\widetilde f=f$ and
$\widetilde\pi_X|_{\mathcal Y\cap\Omega}=\pi_X$,
\eqref{eq:lifted-disk-bound} proves \textup{(ii)}. The choices of
$K_{A_0,B_0}$, $\Omega$, and $\widetilde\pi_X$ depend only on the
fixed target data and the two caps; they are fixed before any individual
lifted disk $\widetilde f$ is chosen. The space $\mathcal Y$, the
projection $\pi_X$, and the splitting $\mathcal L$ were already fixed
from the embedding alone.

\par\smallskip\noindent\textup{(iii)} Regard the splitting $\mathcal L$ from
\eqref{eq:L} as a bundle map from $\pi_X^*T_X$ to the ambient trivial bundle
$\mathcal Y\times\C^{2m}$. In $\nabla\mathcal L$, use the
pullback Chern connection on $\pi_X^*T_X$ and the flat connection on this
ambient bundle. Let
\[
 L_0\coloneqq \sup_{K_{A_0,B_0}}\|\mathcal L\|,
 \qquad
 L_1\coloneqq \sup_{K_{A_0,B_0}}\|\nabla\mathcal L\|.
\]
Set
\[
 \theta_a(z)\coloneqq \mathcal L_{\widetilde f(z)}e_a(z).
\]
Then $d(\widetilde\pi_X)_{\widetilde f}\theta_a=e_a$. With the preceding
connection convention,
ambient differentiation gives
\begin{equation}
 \theta_a'
 =\mathcal L_{\widetilde f}\nabla_ze_a
  +(\nabla_{\widetilde f'}\mathcal L)e_a.
 \label{eq:thetaderivative}
\end{equation}
The frame and lifted-disk bounds, with
$|a+b|^2\leq2|a|^2+2|b|^2$, give
\begin{align}
 \|\theta\|_{L^\infty,\op}
 &\leq L_0\sqrt\Gamma\eqqcolon C_{\theta,\infty},
 \label{eq:theta-sup-bound}\\
 \int_{\D_R}\|\theta'\|_{\HS}^2dA
 &\leq2L_0^2D_1+2n\Gamma L_1^2D_{\widetilde f}\eqqcolon D_\theta.
 \label{eq:theta-dirichlet-bound}
\end{align}
This proves \textup{(iii)}.
\end{proof}

We now add the lifted variation fields and project to $X$. The lifted
frame matrix $\theta$ is holomorphic on $\D_R$ but need not extend
holomorphically across $\partial\D_R$. For $0<\rho<1$, we therefore
replace $\theta(z)$ in the variation field by $\theta(\rho z)$. The new
matrix is holomorphic on the larger disk $\D_{R/\rho}$, so the resulting
field extends past $\overline\D_R$. We leave the central lift
$\widetilde f(z)$ unchanged; hence the projected family at parameter
$t=0$ is still the original map $f$. The multiplier is also unchanged,
so its vanishing orders at the origin are preserved. As $\rho\uparrow1$,
the regularized fields converge to the original fields, and their paired
trace converges to the value computed in
Section~\ref{sec:trace-lifting}. The parameter range and cubic remainder
remain uniform throughout this approximation.

The embedding fixes $\mathcal Y,\pi_X,\mathcal L$, independently of
the disks and caps, and the caps $A_0,B_0$ then fix
$K_{A_0,B_0},\Omega,\widetilde\pi_X$; these data, together with
$D_\Phi,N_\Phi$, determine $t_0,C_T$. Each disk comes with its own
compatible frame, lift, and lifted fields, as in
Theorem~\ref{thm:uniform-lift}. The approximation parameter $\rho$ may
depend on this triple, on $\Phi$, and on the prescribed trace error
$\varepsilon_{\mathrm{tr}}$, whereas $t_0$ and $C_T$ are independent
of $R,f,\Phi,\rho$, and $\varepsilon_{\mathrm{tr}}$; the extension
neighborhood may depend on the family, on $\Phi$, and on $\rho$, and
its width need not be uniform.

\begin{theorem}
\label{thm:engine}
Let $X$ be a compact projective manifold of dimension $n\geq1$,
equipped with a K\"ahler metric $\omega$. Fix the projective embedding
$\iota:X\hookrightarrow\PP^{m-1}$, and the bounds
$A_0,B_0\geq0$ as in
Theorem~\ref{thm:uniform-lift}. Let $D_\Phi\geq0$, and let
$N_\Phi\geq1$ be an integer. There are constants $t_0>0$ and
$C_T<\infty$, depending only on
$(X,\omega,\iota,A_0,B_0,D_\Phi,N_\Phi)$, with the following
property.

Let $R>0$, and let $f:\D_R\to X$ extend holomorphically to a
neighborhood of $\overline{\D}_R$ and satisfy
$\Area_R(f)\leq A_0$ and $B_R(f)\leq B_0$. Use the fixed
$K_{A_0,B_0},\Omega$, and $\widetilde\pi_X$ from
Theorem~\ref{thm:uniform-lift}, and choose any compatible disk-dependent
triple $e,\widetilde f,\theta$ supplied by that theorem. Suppose that
$\Phi=(\Phi_1,\ldots,\Phi_{N_\Phi}):\D_R\to
\operatorname{Mat}_{n\times N_\Phi}(\C)$ extends holomorphically to an open
neighborhood of $\overline{\D}_R$ and satisfies
\[
 \sup_{z\in\overline{\D}_R}\norm{\Phi(z)}_{\op}\leq1,
 \qquad
 \int_{\D_R}\norm{\Phi'}_{\HS}^2dA\leq D_\Phi,
\]
where the column $\Phi_a$ determines the section
$e\Phi_a=\sum_{j=1}^ne_j\Phi_{ja}$ of $f^*T_X$.
The constants $t_0$ and $C_T$ are independent of $R,f,\Phi$ and of this
compatible triple.

For each $0<\rho<1$, write
\[
 \Xi_{\rho}(z)\coloneqq
 \theta(\rho z)\Phi(z),\qquad
 (\Xi_{\rho})_a(z)\coloneqq
 \sum_{j=1}^n\theta_j(\rho z)\Phi_{ja}(z).
\]
\begin{enumerate}
\renewcommand{\labelenumi}{\textup{(\roman{enumi})}}
\item \emph{Holomorphic realization.} For every $0<\rho<1$, each
column defines the family
\[
 f^{\rho}_{a,t}(z)\coloneqq
 \widetilde\pi_X\bigl(\widetilde f(z)
              +t(\Xi_{\rho})_a(z)\bigr),
 \qquad t\in\C,\quad |t|\leq t_0,\quad 1\leq a\leq N_\Phi.
\]
All these families are jointly holomorphic on one open
neighborhood of
$\overline{\D}_R\times\{t\in\C:|t|\leq t_0\}$; for every
$0<\rho<1$ that neighborhood may
depend on the selected disk-dependent triple, $\Phi$, and $\rho$, but
not on the column $a$.
\item \emph{Uniform cubic remainder.} For every $0<\rho<1$,
$1\leq a\leq N_\Phi$, and $\tau\in\{1,i\}$, write
$t=\tau s$ with $s\in\R$ and set
\[
 \ell^{\rho}_{a,\tau}\coloneqq
 \left.\frac{d}{ds}\right|_{s=0}\Area_R(f^{\rho}_{a,\tau s}),
 \qquad
 H^{\rho}_{a,\tau}\coloneqq
 \left.\frac{d^2}{ds^2}\right|_{s=0}
 \Area_R(f^{\rho}_{a,\tau s}).
\]
For $|s|\leq t_0$,
\begin{equation}
 \Area_R(f^{\rho}_{a,\tau s})
 \leq \Area_R(f)+\ell^{\rho}_{a,\tau}s
       +\frac12H^{\rho}_{a,\tau}s^2
       +C_T|s|^3
 \label{eq:Taylor}
\end{equation}
with the same constant $C_T$ for both choices of $\tau$.
\item \emph{Trace approximation.} If $\Phi$ is boundary
coisometric, meaning $\Phi\Phi^*=\Id_n$ on $\partial\D_R$, then for every
$\varepsilon_{\mathrm{tr}}>0$ there is a number
\[
 \rho_{\mathrm{tr}}
 =\rho_{\mathrm{tr}}(R,f,e,\widetilde f,\theta,\Phi,
     \varepsilon_{\mathrm{tr}})\in(0,1)
\]
such that
\begin{equation}
 \left|\sum_{a=1}^{N_\Phi}
       \bigl(H^{\rho_{\mathrm{tr}}}_{a,1}
              +H^{\rho_{\mathrm{tr}}}_{a,i}\bigr)
 -\left(4\int_{\D_R}\|\Phi'\|_{\HS}^2dA
 -2\int_{\D_R}f^*\Ric(\omega)\right)\right|
 \leq\varepsilon_{\mathrm{tr}}.
 \label{eq:actual-paired-trace}
\end{equation}
\end{enumerate}
\end{theorem}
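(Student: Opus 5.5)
The proof treats the three parts in order. The lifted disk lies in the fixed compact set $K_{A_0,B_0}$, and all perturbations are controlled in sup norm by constants fixed in advance, so everything happens inside one fixed compact neighborhood of $K_{A_0,B_0}$ in $\Omega$.

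\textbf{Choice of $t_0$ and part (i).} Choose a compact neighborhood $K'\Subset\Omega$ of $K_{A_0,B_0}$ containing the closed Euclidean $r_0$-neighborhood of $K_{A_0,B_0}$. The lifts satisfy $\|\theta\|_{\op}\le C_{\theta,\infty}$ and $\|\Phi\|_{\op}\le1$, so $|(\Xi_\rho)_a|\le C_{\theta,\infty}$ on $\overline\D_R$. Set $t_0=r_0/(2C_{\theta,\infty})$, so that $\widetilde f+t(\Xi_\rho)_a$ lies in the $r_0/2$-neighborhood on $\overline\D_R$ for $|t|\le t_0$. The map $\theta(\rho\,\cdot)$ is holomorphic on $\D_{R/\rho}$. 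Both $\widetilde f$ and $\Phi$ extend past $\overline\D_R$, and by continuity the composition is defined and jointly holomorphic on a neighborhood of $\overline\D_R\times\{|t|\le t_0\}$. That neighborhood depends on $\rho$ and the data, but the bound on $t$ does not.

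\textbf{Part (ii), uniform cubic remainder.} Set $F(z,t)=\widetilde\pi_X(\widetilde f(z)+t\Xi(z))$ with $\Xi=(\Xi_\rho)_a$. The area density is $|\partial_zF|^2_\omega$, where
\[
\partial_zF=d\widetilde\pi_X(\widetilde f+t\Xi)\bigl(\widetilde f'+t\Xi'\bigr).
\]
The third $s$-derivative of $|\partial_zF|_\omega^2$ at $t=\tau s$ is bounded pointwise by
\[
C\bigl(|\widetilde f'|^2+|\Xi'|^2\bigr)\bigl(1+|\Xi|\bigr)^3 .
\]
Here $C$ depends on $C^4$ bounds of $\widetilde\pi_X$ and of the metric coefficients on $K'$, and it uses Cauchy estimates shrinking $K'$ slightly. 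Every such term is quadratic in first derivatives, with at most three extra factors of $\Xi$. Moreover $\Xi_\rho'=\rho\theta'(\rho z)\Phi+\theta(\rho z)\Phi'$, so
\[
\int_{\D_R}|\Xi_\rho'|^2\le 2\int_{\D_{\rho R}}\|\theta'\|_{\HS}^2+2C_{\theta,\infty}^2D_\Phi\le 2D_\theta+2C_{\theta,\infty}^2D_\Phi,
\]
after the change of variables $w=\rho z$. Integrating the pointwise bound and applying Taylor's theorem with integral remainder gives \eqref{eq:Taylor}. The constant is
\[
C_T=C\bigl(D_{\widetilde f}+2D_\theta+2C_{\theta,\infty}^2D_\Phi\bigr)\bigl(1+C_{\theta,\infty}t_0\bigr)^3/6,
\]
which is independent of $R,\rho,f,\Phi$. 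Differentiation under the integral is justified because the family is smooth on a neighborhood of the closed disk.

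\textbf{Part (iii), trace approximation.} By Lemma~\ref{lem:levi}, $H^\rho_{a,1}+H^\rho_{a,i}=\mathcal H_{R,f}(\xi^\rho_a)$, where
\[
\xi^\rho_a=d\widetilde\pi_X(\widetilde f)(\Xi_\rho)_a=\sum_j d\widetilde\pi_X(\widetilde f(z))\theta_j(\rho z)\Phi_{ja}(z).
\]
At $\rho=1$ this equals $(e\Phi)_a$, by \eqref{eq:theta-lift-definition}, and Proposition~\ref{prop:matrixtrace} evaluates the sum over $a$ as the target expression. It therefore suffices to show that $\mathcal H_{R,f}(\xi^\rho_a)\to\mathcal H_{R,f}((e\Phi)_a)$ as $\rho\uparrow1$. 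The term $\mathcal H$ involves the normal derivative on $\partial\D_R$, so I need $C^1$ convergence of $\xi^\rho_a$ up to the boundary. This is where the disk-dependent choice enters, and it is the main obstacle. The frame $e$ is smooth on the closed disk by Lemma~\ref{lem:factor}, and $\widetilde f$ extends past $\overline\D_R$. Hence $\theta=\mathcal L_{\widetilde f}e$ is $C^1$ on $\overline\D_R$, and so $\theta(\rho\,\cdot)\to\theta$ in $C^1(\overline\D_R)$ by uniform continuity of $\theta$ and $\theta'$. The fields $\xi^\rho_a$ then converge in $C^1(\overline\D_R)$, so the boundary integrals of $\partial_\nu|\xi^\rho_a|^2_\omega$ converge. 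Choosing $\rho_{\mathrm{tr}}$ close to $1$ gives \eqref{eq:actual-paired-trace}. The non-uniformity of $\rho_{\mathrm{tr}}$ is harmless, since the first two parts do not depend on $\rho$.
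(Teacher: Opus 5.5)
Your proposal is correct and follows essentially the same route as the paper. The paper also fixes $t_0$ from a uniform tube around $K_{A_0,B_0}$ and bounds the third $s$-derivative of the density $v^{\mathsf T}g_0(w)\bar v$, which is affine in $s$, by $C(|\widetilde f'|^2+|\Xi'|^2)$, integrating with $D_{\widetilde f}$ and the dilated bound $2D_\theta+2C_{\theta,\infty}^2D_\Phi$; it then obtains the trace from $C^1(\overline\D_R)$ convergence of $\theta(\rho\,\cdot)$ via Lemma~\ref{lem:levi} and Proposition~\ref{prop:matrixtrace}. One harmless slip: $s$-derivatives of $g_0(\widetilde f+\tau s\Xi)$ produce factors of $\Xi$, not $t\Xi$. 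Your $C_T$ should therefore carry $(1+C_{\theta,\infty})^3$, times a $t_0$-dependent factor from $|\widetilde f'+\tau s\Xi'|^2$, rather than $(1+C_{\theta,\infty}t_0)^3$; this does not affect uniformity.
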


Only the trace approximation requires choosing $\rho_{\mathrm{tr}}$
close to $1$; the constants $t_0$ and $C_T$ remain independent of
$\varepsilon_{\mathrm{tr}}$ and $\rho_{\mathrm{tr}}$.

\begin{proof}
Use the fixed cap-dependent ambient data and the selected compatible
triple from Theorem~\ref{thm:uniform-lift}. Set
\[
 \delta_0\coloneqq
 \min\{1,\dist(K_{A_0,B_0},\C^{2m}\setminus\Omega)\}>0,
\]
and choose a compact $K'\Subset\Omega$ whose interior contains the closed
Euclidean $3\delta_0/4$-thickening of
$K_{A_0,B_0}$. In the standard coordinate frame of
$\C^{2m}$, let $g_0$ be the positive-semidefinite Hermitian
coefficient matrix of $\widetilde\pi_X^*\omega$ on $K'$, so
\[
 v^{\mathsf T}g_0(w)\bar v=|d(\widetilde\pi_X)_w(v)|_\omega^2
\]
for a coefficient column $v$. Only smooth coefficient bounds are used;
the pullback form may be degenerate along the fibers. Take
$C_{g,3}$ to bound the coefficients and their derivatives
through order three on $K'$.

Take a multiplier $\Phi$ satisfying the stated
bounds and, for $0<\rho<1$, set
$\Xi_{\rho}\coloneqq
\theta(\rho\,\cdot)\Phi$.
Dilation gives
$\int_{\D_R}|\rho\theta'(\rho z)|^2\,dA(z)
=\int_{\D_{\rho R}}|\theta'(w)|^2\,dA(w)\leq D_\theta$.
Use this identity, $\|\Phi\|_\infty\leq1$, and
$(\theta(\rho z)\Phi)'=
\rho\theta'(\rho z)\Phi+
\theta(\rho z)\Phi'$ to obtain
\begin{equation}
 \sup_{z\in\D_R}\|\Xi_{\rho}(z)\|_{\op}
 \leq C_{\theta,\infty},
 \qquad
 \int_{\D_R}\|\Xi_{\rho}'\|_{\HS}^2dA
 \leq2D_\theta+2C_{\theta,\infty}^2D_\Phi\eqqcolon D_\Xi.
 \label{eq:increment-bound}
\end{equation}
Choose
\[
 t_0\coloneqq
 \min\left\{1,
 \frac{\delta_0}{4\max\{1,C_{\theta,\infty}\}}\right\}.
\]
For $|t|\leq2t_0$, the displacement of each column perturbation
is at most $2t_0C_{\theta,\infty}\leq\delta_0/2$.
So $\widetilde f+t(\Xi_{\rho})_a$ stays in $K'$,
uniformly in the disk and the column.

\par\smallskip\noindent\textup{(i)}
Fix a column and write $\Xi\coloneqq(\Xi_{\rho})_a$. The function
$\theta(\rho z)$ is holomorphic for $|z|<R/\rho$; since
$\widetilde f$ and $\Phi$ extend past the closed disk, so does
$\widetilde f+t\Xi$. On the closed disk and for
$|t|\leq t_0$ its image lies in the closed
$\delta_0/2$-thickening of $K_{A_0,B_0}$, which
is compactly contained in $\operatorname{int}K'\Subset\Omega$.
Compactness gives a neighborhood of
$\overline\D_R\times\{|t|\leq t_0\}$ on which the image remains in
$\Omega$, so the projected family is jointly holomorphic there.
Intersect these neighborhoods over the finitely many columns.

\par\smallskip\noindent\textup{(ii)}
In ambient coordinates write $w(t)\coloneqq\widetilde f+t\Xi$ and
$v(t)\coloneqq\widetilde f'+t\Xi'$. The area density is
$v(t)^{\mathsf T}g_0(w(t))\overline{v(t)}$. Along either real parameter axis
$t=\tau s$, with $\tau\in\{1,i\}$, the functions $w(\tau s)$ and
$v(\tau s)$ are affine in $s$, with first derivatives $\tau\Xi$ and
$\tau\Xi'$. For fixed $z$, put
$G(s)\coloneqq g_0(w(\tau s))$, $V(s)\coloneqq v(\tau s)$, and
$Q\coloneqq\tau\Xi'$. Ordinary differentiation in the real variable $s$
gives
\[
 \frac{d^3}{ds^3}(V^{\mathsf T}G\bar V)
 =V^{\mathsf T}G'''\bar V
  +3Q^{\mathsf T}G''\bar V+3V^{\mathsf T}G''\bar Q+6Q^{\mathsf T}G'\bar Q.
\]
The chain rule bounds $|G^{(k)}(s)|$ by
$C_{g,3}|\Xi|^k$ for $1\leq k\leq3$, up to fixed dimension factors.
The bounds
$\|\Xi\|_\infty\leq C_{\theta,\infty}$ and $|t|\leq t_0$ consequently give the
integrable estimate
\[
 \left|\frac{d^3}{ds^3}
 \bigl(v(\tau s)^{\mathsf T}g_0(w(\tau s))\overline{v(\tau s)}\bigr)\right|
 \leq C(C_{g,3},C_{\theta,\infty},t_0)
       (|\widetilde f'|^2+|\Xi'|^2).
\]
The right-hand side has a uniformly bounded integral by
\eqref{eq:lifted-disk-bound} and \eqref{eq:increment-bound}.
The third derivative of the area is therefore uniformly bounded along
both real parameter axes. Taylor's theorem gives \eqref{eq:Taylor}
with one remainder constant $C_T$.

\par\smallskip\noindent\textup{(iii)}
The tangent fields of the actual families satisfy, for each fixed $f$,
\[
 \xi_{\rho,a}\coloneqq
 d(\widetilde\pi_X)_{\widetilde f}
 \bigl(\theta(\rho\,\cdot)\Phi_a\bigr)
 \longrightarrow e\Phi_a
 \quad\text{in }C^1(\overline{\D}_R)
 \quad(\rho\uparrow1),
\]
because $\theta(\rho\,\cdot)\to\theta$ in the same norm.
The K\"ahler condition enters at this point, through Lemma~\ref{lem:levi}.
If $\Phi$ is boundary coisometric, that lemma, continuity of
\eqref{eq:boundary-quadratic-form}, and Proposition~\ref{prop:matrixtrace}
therefore give
\[
 \sum_a\mathcal H_{R,f}(\xi_{\rho,a})
 \longrightarrow
 4\int_{\D_R}\|\Phi'\|_{\HS}^2\,dA
 -2\int_{\D_R}f^*\Ric(\omega).
\]
The summand is $H^\rho_{a,1}+H^\rho_{a,i}$ by Lemma~\ref{lem:levi}.
Choose $\rho_{\mathrm{tr}}$ close enough to one to obtain the
prescribed error. Parts \textup{(i)}--\textup{(ii)} and
\eqref{eq:increment-bound} hold for every $\rho\in(0,1)$ with the same
$t_0,C_T$. Their dependence is only on the fixed target data and
$A_0,B_0,D_\Phi,N_\Phi$.
\end{proof}

\subsection{A common area and derivative estimate}\label{eff:subsec:joint-estimate}

For the fixed caps, write $K=K_{A_0,B_0}$, $\Pi=\widetilde\pi_X$, and
$D=D_{\widetilde f}$ for the common compact set, ambient projection,
and Dirichlet bound supplied by Theorem~\ref{thm:uniform-lift}.
Each compatible lift $H$ satisfies
\begin{equation}\label{eq:quantitative-lift}
 \Pi H=f,\quad H(\overline\D_R)\subset K,\quad
 \int_{\D_R}|H'|^2dA\le D.
\end{equation}
Choose $d_K>0$ such that the closed Euclidean
$d_K$-neighborhood $K_{d_K}$ of $K$ is compactly contained in
$\Omega$. Write $g(w)$ for the Hermitian coefficient matrix of
$\Pi^*\omega$ in ambient coordinates. Thus
\[
 a^{\mathsf T}g(w)\bar a=|d\Pi_w(a)|_\omega^2.
\]
The form can be degenerate. On $K_{d_K}$, fix bounds
\begin{equation}\label{eff:eq:metric-bounds}
 \|g(w)\|_{\op}\leq M_0,\qquad
 \|Dg(w)[v]\|_{\op}\leq L_g|v|,
\end{equation}
where $Dg$ is the real derivative in ambient coordinates. These bounds
are fixed before selecting an individual disk.

\begin{lemma}\label{eff:lem:weighted-euclidean}
Let $R>0$, $N\geq1$, and $E\geq0$. If $F:\D_R\to\C^N$ is holomorphic and
$\int_{\D_R}|F'|^2dA\leq E$, then
\[
 (R-|z|)|F'(z)|\leq\sqrt{E/\pi}\quad(z\in\D_R).
\]
\end{lemma}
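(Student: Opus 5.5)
We apply the mean-value property to the holomorphic derivative on the largest disk about $z$ contained in $\D_R$, and then use Cauchy--Schwarz. Fix $z\in\D_R$ and put $r\coloneqq R-|z|>0$, so that $\D_r(z)\subset\D_R$. Each coordinate of $F'$ is holomorphic on $\D_r(z)$, so the area mean-value property holds coordinatewise. Since the area mean is linear, it holds for the vector-valued map as well:
\[
 F'(z)=\frac{1}{\pi r^2}\int_{\D_r(z)}F'(w)\,dA(w).
\]
Strictly, we apply this on $\D_{r'}(z)$ with $r'<r$ and let $r'\uparrow r$. This avoids any integrability question at the boundary circle, although the finite Dirichlet integral already makes $F'$ square integrable on $\D_r(z)$.

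Taking Euclidean norms and applying the Cauchy--Schwarz inequality in $L^2(\D_r(z))$ against the constant function $1$, we obtain
\[
 |F'(z)|\le\frac{1}{\pi r^2}\Bigl(\int_{\D_r(z)}|F'|^2dA\Bigr)^{1/2}(\pi r^2)^{1/2}
 \le\frac{1}{r\sqrt\pi}\Bigl(\int_{\D_R}|F'|^2dA\Bigr)^{1/2}\le\frac{\sqrt{E}}{r\sqrt\pi}.
\]
Multiplying by $r=R-|z|$ gives $(R-|z|)|F'(z)|\le\sqrt{E/\pi}$, which is the claim.

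No step here is a genuine obstacle. The only points that need care are the normalization and the vector-valued setting. The constant is exactly $\sqrt{E/\pi}$ because $dA$ is Euclidean area and the averaging disk has area $\pi r^2$. The vector norm inequality $|\int F'|\le\int|F'|$ is the standard Minkowski inequality for integrals. The estimate is independent of $N$ and $R$, which is what the later uniform estimates require.
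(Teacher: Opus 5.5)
Your proof is correct and is essentially the paper's argument. The paper applies the submean inequality to the subharmonic function $|F'|^2$ on $\D_r(z)$ for $r<R-|z|$ and then lets $r\uparrow R-|z|$. You instead apply the mean-value identity to $F'$ and then Cauchy--Schwarz, which yields the same bound $\pi r^2|F'(z)|^2\le\int_{\D_r(z)}|F'|^2\,dA\le E$.
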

\begin{proof}
For $0<r<R-|z|$, the classical submean inequality~\cite[Ch.~I, \S4]{DemaillyCADG2012}, applied to $|F'|^2$, gives
\[
 \pi r^2|F'(z)|^2\leq\int_{\D_r(z)}|F'|^2dA\leq E.
\]
Let $r\uparrow R-|z|$.
\end{proof}

\begin{proposition}\label{eff:prop:joint-estimate}
Let $(X,\omega)$ be a compact projective manifold with a smooth
Hermitian metric. Fix $R>0$ and the cap-dependent ambient data
$K,\Omega,\Pi,D,d_K,M_0,L_g$ from
\eqref{eq:quantitative-lift}--\eqref{eff:eq:metric-bounds}.
Let $H$ be holomorphic near $\overline\D_R$ and satisfy
\eqref{eq:quantitative-lift}, with $f=\Pi\circ H$. Let $S,E\geq0$, and
let $\Xi$ be a $\C^{2m}$-valued map holomorphic near
$\overline\D_R$, with
\[
 \|\Xi\|_\infty\leq S,\qquad
 \int_{\D_R}|\Xi'|^2\,dA\leq E.
\]
Fix $0<\bar t\leq1$ with $\bar t S<d_K$.
For $|t|\leq\bar t$, set
$f_t=\Pi(H+t\Xi)$ and
\begin{equation}\label{eff:eq:general-cost}
 C_A=L_gSD+2M_0\sqrt{DE}+M_0\bar t E.
\end{equation}
Then
\begin{align}
 |\Area_R(f_t)-\Area_R(f)|&\leq C_A|t|,\label{eff:eq:general-A}\\
 |B_R(f_t)^2-B_R(f)^2|&\leq\frac{C_A}{\pi}|t|.
 \label{eff:eq:general-B}
\end{align}
The same family has the pointwise displacement bound
\begin{equation}\label{eff:eq:position-step}
 \dist_\omega(f_t(z),f(z))\le\sqrt{M_0}\,S|t|\qquad(z\in\D_R).
\end{equation}
\end{proposition}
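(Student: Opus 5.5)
The plan is to work entirely in the ambient coordinates of $\C^{2m}$. In these coordinates both the area density and the weighted derivative of $f_t$ are expressed through the Hermitian coefficient matrix $g$ of $\Pi^*\omega$.

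\emph{Setup.} Write $w_t=H+t\Xi$ and $v_t=H'+t\Xi'$, so that $(f_t)_z=d\Pi_{w_t}(v_t)$ and
\[
 |(f_t)_z|_\omega^2=v_t^{\mathsf T}g(w_t)\overline{v_t}.
\]
For $|t|\le\bar t$ we have $|w_t-H|\le\bar tS<d_K$. Since $H(\overline\D_R)\subset K$, the whole segment from $H(z)$ to $w_t(z)$ lies in $K_{d_K}$. Hence the bounds \eqref{eff:eq:metric-bounds} apply along it.

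\emph{Pointwise decomposition.} Expand the difference of densities as
\[
 |(f_t)_z|^2_\omega-|f_z|^2_\omega
 =H'^{\mathsf T}\bigl(g(w_t)-g(H)\bigr)\overline{H'}
 +2\operatorname{Re}\bigl(t\,\Xi'^{\mathsf T}g(w_t)\overline{H'}\bigr)
 +|t|^2\,\Xi'^{\mathsf T}g(w_t)\overline{\Xi'}.
\]
The mean value inequality along the segment, with the Lipschitz bound $L_g$, bounds the first term by $L_gS|t|\,|H'|^2$. The operator bound $M_0$ bounds the second by $2M_0|t|\,|\Xi'|\,|H'|$. It bounds the third by $M_0|t|^2|\Xi'|^2\le M_0\bar t|t|\,|\Xi'|^2$.

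\emph{Area estimate \eqref{eff:eq:general-A}.} Integrate the decomposition over $\D_R$. The first term contributes at most $L_gSD|t|$ by \eqref{eq:quantitative-lift}. Cauchy--Schwarz bounds the second by $2M_0\sqrt{DE}\,|t|$. The third contributes at most $M_0\bar tE|t|$. Summing gives exactly the constant $C_A|t|$ of \eqref{eff:eq:general-cost}.

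\emph{Weighted-derivative estimate \eqref{eff:eq:general-B}.} Multiply the decomposition by $(R-|z|)^2$. Lemma~\ref{eff:lem:weighted-euclidean} gives
\[
 (R-|z|)|H'(z)|\le\sqrt{D/\pi},\qquad (R-|z|)|\Xi'(z)|\le\sqrt{E/\pi}.
\]
Inserting these into the three terms yields, for every $z\in\D_R$,
\[
 (R-|z|)^2\bigl|\,|(f_t)_z|^2_\omega-|f_z|^2_\omega\bigr|\le\frac{C_A}{\pi}|t|.
\]
Since $B_R(\cdot)^2=\sup_z(R-|z|)^2|(\cdot)_z|_\omega^2$, and $|\sup a-\sup b|\le\sup|a-b|$, this gives \eqref{eff:eq:general-B}.

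\emph{Displacement \eqref{eff:eq:position-step}.} Use the path $s\mapsto\Pi(H(z)+s\,t\Xi(z))$ for $s\in[0,1]$. It stays in the image of $K_{d_K}$, and its $\omega$-speed is $|d\Pi(t\Xi)|_\omega\le\sqrt{M_0}\,S|t|$. Its length, and hence the distance, is therefore at most $\sqrt{M_0}\,S|t|$.

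The only delicate point I expect is confirming that every intermediate point lies in $K_{d_K}$, so that the fixed bounds on $g$ apply. This is exactly what the hypothesis $\bar tS<d_K$ provides. The remaining steps are bookkeeping that produces the specific constant $C_A$.
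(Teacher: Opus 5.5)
Your proposal is correct and follows the paper's proof essentially line by line. It uses the same three-term expansion of the density difference (metric-coefficient change, mixed term, quadratic term), bounded via $L_g$, $M_0$ and $|t|\le\bar t$ and integrated with Cauchy--Schwarz to give exactly $C_A$; the same multiplication by $(R-|z|)^2$ together with Lemma~\ref{eff:lem:weighted-euclidean} and $|\sup p-\sup q|\le\sup|p-q|$ for the $B_R^2$ bound; and the same path-length argument for the displacement, including the preliminary check that $\bar tS<d_K$ keeps every segment inside $K_{d_K}$.
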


The estimate for $B_R^2$ gives both an upper and a lower
bound for the new value of $B_R$:
\begin{equation}\label{eff:eq:B-interval}
 \sqrt{\max\{0,B_R(f)^2-C_A|t|/\pi\}}
 \leq B_R(f_t)
 \leq\sqrt{B_R(f)^2+C_A|t|/\pi}.
\end{equation}
\begin{proof}
The entire segment $H(z)+st\Xi(z)$, $0\leq s\leq1$, lies in
$K_{d_K}$, so \eqref{eff:eq:metric-bounds} applies along it. Put
$a=H'(z)$, $b=\Xi'(z)$, and $g_t=g(H(z)+t\Xi(z))$. The change in area density has three contributions: the change in
the metric coefficients, a mixed derivative term, and a quadratic
derivative term. More precisely,
\begin{equation}\label{eff:eq:density-identity}
 (a+tb)^{\mathsf T}g_t\overline{a+tb}-a^{\mathsf T}g_0\bar a
 =a^{\mathsf T}(g_t-g_0)\bar a+2\operatorname{Re}(t b^{\mathsf T}g_t\bar a)
       +|t|^2b^{\mathsf T}g_t\bar b.
\end{equation}
Its absolute value is at most
\[
 |t|L_gS|a|^2+2M_0|t||a||b|+M_0|t|^2|b|^2.
\]
Integrating, and applying Cauchy--Schwarz to the middle term, proves
\eqref{eff:eq:general-A}.

For the second estimate, put $d=R-|z|$. Lemma~\ref{eff:lem:weighted-euclidean}
gives $d|a|\leq\sqrt{D/\pi}$ and $d|b|\leq\sqrt{E/\pi}$.
Multiplying the same pointwise estimate by $d^2$ therefore bounds the
difference of the two squared weighted derivatives by $C_A|t|/\pi$.
Use $|\sup p-\sup q|\leq\sup|p-q|$ and
$B_R(f)^2=\sup d^2|f_z|_\omega^2$ to obtain \eqref{eff:eq:general-B}.
Finally, the path $s\mapsto\Pi(H(z)+st\Xi(z))$, $0\le s\le1$,
has speed at most $\sqrt{M_0}|t||\Xi(z)|$. Its length proves
\eqref{eff:eq:position-step}. Only bounds on the possibly degenerate
metric coefficients have been used.
\end{proof}

\subsection{Uniform descent and enlargement}
\label{subsec:disk-operations}

Return now to the relative setting, where
$\Ric(\omega)\geq\kappa\omega$. The trace is negative whenever
$\Area_R(f)>\Acrit$. Selecting one real direction and the sign of its parameter
gives descent; the common parameter radius and Taylor remainder give a
uniform step size under fixed area and weighted-derivative bounds.

\begin{proof}[Proof of Proposition~\ref{cor:compact-fiber-descent}]
Put $x_0=f(0)$ and $\lambda=\lambda_{x_0}^F$.

For the multiplier \eqref{eq:scalar-multiplier}, one has
$N_\Phi=n$, $D_\Phi=\pi(n+1)$, and, by \eqref{eq:scalartrace},
\begin{equation}\label{eq:negative-trace-margin}
 4\pi(n+1)-2\kappa \Area_R(f)\leq-2\kappa\delta.
\end{equation}
Apply Theorem~\ref{thm:engine} with trace error
$\varepsilon_{\mathrm{tr}}=\kappa\delta$, choose the value $\rho_{\mathrm{tr}}$ supplied
by part~\textup{(iii)}, and abbreviate
$f_{a,t}=f^{\rho_{\mathrm{tr}}}_{a,t}$,
$\ell_{a,\tau}=\ell^{\rho_{\mathrm{tr}}}_{a,\tau}$, and
$H_{a,\tau}=H^{\rho_{\mathrm{tr}}}_{a,\tau}$.
For every column $a$ and every
$|t|\leq t_0$, the identities $\Phi_a(0)=0$ and
$d(\widetilde\pi_X)_{\widetilde f(0)}\theta(0)=e(0)$ give the exact
interpolation formula
\begin{equation}
 \begin{aligned}
 f_{a,t}(0)&=f(0),\\
 (f_{a,t})_z(0)&=f_z(0)+t\,e(0)\Phi_a'(0),\\
 \lambda((f_{a,t})_z(0))
 &=\lambda(f_z(0))+t\lambda(e(0)\Phi_a'(0))=1.
 \end{aligned}
 \label{eq:exact-scalar-interpolation}
\end{equation}
Indeed, the ambient perturbation at $z=0$
equals $\widetilde f(0)$ for every $t$, so the chain rule always evaluates
$d\widetilde\pi_X$ at the same point $\widetilde f(0)$.
Moreover, the product rule gives
$(\Xi_{\rho_{\mathrm{tr}}})_a'(0)=\theta(0)\Phi_a'(0)$, so radial regularization does not
alter the derivative of the increment at the origin. Formula
\eqref{eq:exact-scalar-interpolation} fixes the displayed scalar
component, not the entire first jet. For this choice, part~\textup{(iii)}
and \eqref{eq:negative-trace-margin} give
$\sum_a(H_{a,1}+H_{a,i})\leq-\kappa\delta$.
One of the $2n$ real axes therefore has second area derivative
$H_{a,\tau}\leq-\kappa\delta/(2n)$. Choose the sign of $s$ so that
$\ell_{a,\tau}s\leq0$. Then \eqref{eq:Taylor} gives
\[
 \Area_R(f_{a,\tau s})-\Area_R(f)
 \leq-\frac{\kappa\delta}{4n}s^2+C_T|s|^3.
\]
Before selecting the step, decrease the engine parameter radius to
$\min\{t_0,1,d_K/(2\max\{1,C_{\theta,\infty}\})\}$ if necessary,
so that the selected perturbation segment lies in the compact tube
used in Proposition~\ref{eff:prop:joint-estimate}. Set
\begin{equation}\label{eq:descent-step}
 t_*\coloneqq\min\left\{\frac{t_0}{2},
 \frac{\kappa\delta}{8n\max\{C_T,1\}}\right\},
 \qquad
 \sigma\coloneqq\frac{\kappa\delta}{8n}t_*^2.
\end{equation}
Take $s\in\{t_*,-t_*\}$ with the chosen sign and set $g=f_{a,\tau s}$.
The Taylor estimate gives $\Area_R(g)\le\Area_R(f)-\sigma$.
The selected increment has the bounds in \eqref{eq:increment-bound}.
Proposition~\ref{eff:prop:joint-estimate} gives the remaining estimate in
\eqref{eq:quantitative-descent} with
\[
 \beta_I=C_{A,I}t_*/\pi,
\]
where $C_{A,I}$ is \eqref{eff:eq:general-cost} for this increment.
The parameter choice keeps its entire segment in the common tube.
The constants $t_0,C_T,C_{A,I}$ depend only on the fixed caps and target
data. The multiplier has the same cost for every nonzero covector
$\lambda_{x_0}^F$. Thus $t_*,\sigma,\beta_I$ are independent of the
radius, the disk, and the center in $F$.
\end{proof}

For scalar enlargement, it suffices to lift one finite-area disk and
perturb its lift in one constant ambient direction.

\begin{proof}[Proof of Proposition~\ref{prop:finite-area-enlargement}]
\par\smallskip\noindent\textup{(i)}
Suppose that $h$ extends holomorphically past $\overline\D_R$.
Theorem~\ref{thm:uniform-lift}\textup{(ii)} supplies a lift $H$ defined
near $\overline\D_R$, with $H(\overline\D_R)\subset K_{A_0,B_0}$,
$\Pi\circ H=h$, and
$\int_{\D_R}|H'|^2\,dA\leq D_{\widetilde f}$; here
$\Pi=\widetilde\pi_X$ and the compact set, ambient neighborhood, and
Dirichlet bound depend only on the caps and fixed target data.
Enlarge $D=D_{\widetilde f}$ to at least $1$ and put
\[
 Q=\sqrt{D/\pi},\qquad
 \bar t=\min\{1,d_K/(2\max\{1,Q\})\},
\]
where $d_K$ is the common tube radius used in
Section~\ref{eff:subsec:joint-estimate}. For $v=H'(0)$,
Lemma~\ref{eff:lem:weighted-euclidean} gives
\[
 R|v|\le Q,\qquad\|zv\|_\infty\le Q,\qquad
 \int_{\D_R}|(zv)'|^2dA=\pi R^2|v|^2\le D.
\]
The affine formula \eqref{eq:explicit-enlargement} therefore stays in the
common tube and extends past the closed disk.
Proposition~\ref{eff:prop:joint-estimate} gives the area and $B^2$
bounds with
\begin{equation}\label{eff:eq:CII}
 C_{II}=D(L_gQ+3M_0).
\end{equation}
Source dilation satisfies the exact identities
\[
 \Area_{(1+t)R}(G_t)=\Area_R(h_t),\qquad
 B_{(1+t)R}(G_t)=B_R(h_t).
\]
The affine increment vanishes at $0$ and gives
$(h_t)_z(0)=(1+t)h_z(0)$; dilation restores the entire derivative.
The displacement estimate refers to corresponding source points:
\[
 \dist_\omega(G_t((1+t)z),h(z))\le\sqrt{M_0}\,Q t.
\]

\par\smallskip\noindent\textup{(ii)}
Now let $h$ be given only on $\D_R$. For a chosen $t$, the restriction
radius $r=(1+t/2)R/(1+t)$ is strictly smaller than $R$. Hence
$h|_{\D_r}$ extends past its closed disk, and restriction gives
\[
 \Area_r(h)\le\Area_R(h)\le A_0,\qquad
 B_r(h)\le B_R(h)\le B_0.
\]
Apply the closed-disk construction at radius $r$; its dilated map is
exactly $g$ in \eqref{eff:eq:closed-repair} and extends past
$\overline\D_{R'}$. Its two-sided estimates read
\[
 |\Area_{R'}(g)-\Area_r(h)|\le C_{II}t,\qquad
 |B_{R'}(g)^2-B_r(h)^2|\le C_{II}t/\pi.
\]
The restriction inequalities give \eqref{eff:eq:closed-repair-cost},
while the closed-disk interpolation gives $g(0)=h(0)$ and
$g_z(0)=h_z(0)$. The finite displacement bound compares $g((1+t)z)$
with $h(z)$ for $z\in\D_r$; it does not give a closeness estimate for
the principal-map recovery.
\end{proof}

\section{Compactness, marked values, and finite-area disks}
\label{sec:area-descent}

Here we prove the compactness and removal statements used above.
Marked compactness is applied on one fixed interior disk, with all
component areas counted together; a local small-area estimate supplies
the weighted derivative bound for an individual finite-area disk, and
point selection produces the sphere used in recovery.

Throughout this section, $X$ is a compact complex manifold equipped
with a smooth Hermitian metric $\omega$.

\subsection{Finite-area removal}\label{subsec:finite-area-removal}

Finite area fills the punctures of a principal limit, including infinity
when its source is $\C$. We first prove removal so that the
compactness statement can include the extended principal map.

With $g_\omega(v,w)=\omega(v,Jw)$, the energy convention in the compactness
and removal results used below is
\begin{equation}
 \|df\|_{L^2}^2
 =\int\bigl(|f_x|_{g_\omega}^2+|f_y|_{g_\omega}^2\bigr)\,dA
 =2\int f^*\omega=2\Area(f),
 \label{eq:compactness-energy-normalization}
\end{equation}
because $f_y=Jf_x$. Thus their energy hypotheses are equivalent to our
area hypotheses, with the fixed factor $2$.

\begin{proof}[Proof of Theorem~\ref{thm:removal}]
Finite area gives
$\int_{0<|z|<r}f^*\omega\to0$ as $r\downarrow0$, so every sufficiently
small concentric annulus has small energy by
\eqref{eq:compactness-energy-normalization}.
Compactness of $X$ supplies completeness of the target metric and
uniform continuity of its complex structure. Corollary~1.6 of Ivashkovich and
Shevchishin~\cite{IvashkovichShevchishin2000}
therefore gives a continuous extension at the puncture.
In a target chart about the limiting value, the coordinate functions
extend holomorphically by the Riemann removable singularity theorem.
For an entire map, $\zeta=1/z$ identifies infinity with a puncture and
preserves area, so the same argument applies there.
See also~\cite[Theorem~4.2.1]{McDuffSalamon1994}.
\end{proof}

For maps into $\PP^1$, removal also follows from the great Picard
theorem~\cite[\S15, pp.~164--166]{Picard1880}: an essential singularity
would give infinitely many preimages of all but at most two values,
and the area formula with multiplicities would force infinite
spherical area.

\subsection{Principal compactness}
\label{sec:marked-compactness}

Spheres obtained by rescaling near energy concentration points go back
to Sacks and Uhlenbeck~\cite[pp.~2--3 and Section~4]{SacksUhlenbeck1981};
Gromov's compactness theory records how such components
attach~\cite{Gromov1985}. We use the stable convergence of
Definition~2.5 and the compactness theorem with boundary in
Theorem~1 of Ivashkovich and
Shevchishin~\cite{IvashkovichShevchishin2000}.

The \emph{principal map} is the limit in the original source coordinate
and may be constant. A \emph{bubble} is a sphere obtained by rescaling;
the finite-area extension of an entire principal map is also a sphere,
but need not be a bubble.

\begin{proof}[Proof of Proposition~\ref{prop:principal-compactness}]
First pass to a subsequence whose total areas converge to the original
lower limit $A_\infty$.
A diagonal weak selection of the area measures on compact subdisks
gives a Radon measure $\mu$ on $\D_{R_\infty}$.
For every continuous compactly supported $\chi$ with $0\leq\chi\leq1$,
\[
 \int\chi\,d\mu
 =\lim_j\int\chi\,f_j^*\omega\leq A_\infty,
\]
so $\mu(\D_{R_\infty})\leq A_\infty$ by exhaustion.
Fix $p>2$ and an area threshold $\epsilon_0>0$ small enough to apply
Lemma~1.1 and Corollary~1.3 of Ivashkovich and
Shevchishin~\cite{IvashkovichShevchishin2000}, using
\eqref{eq:compactness-energy-normalization}, and set
\[
 \Sigma_\infty=\{z:\mu(\{z\})\geq\epsilon_0\}.
\]
The mass bound makes this set finite.
For $z\notin\Sigma_\infty$, choose a source disk with $\mu$-null
boundary and $\mu$-mass less than $\epsilon_0$.
Weak convergence puts its area below that threshold for all large $j$.
The cited result gives strong local $W^{1,p}$ convergence on a smaller
disk; Sobolev embedding gives uniform convergence, and target charts
and Cauchy estimates give smooth convergence.
A diagonal choice on a countable cover gives $f_\infty$ on the complement
of $\Sigma_\infty$ in the original coordinate. Its local definitions
agree because they are limits of the same subsequence.
Smooth convergence and exhaustion give
$\Area(f_\infty)\leq A_\infty$.
Theorem~\ref{thm:removal} fills the finitely many punctures without
changing area.
\end{proof}

\subsection{Two marked values and incidence}\label{subsec:marked-incidence}

Fixing the complete first jet at $0$ does not ensure that a nonconstant
bubble retains the prescribed tangent direction. In the following
example, the marked value lies on the bubble, while that direction
remains on the principal component. On
the unit disk, with the product Fubini--Study metric, consider
\[
 \widehat f_j(z)\coloneqq \bigl([jz^2:1],[z:1]\bigr)
 :\D\longrightarrow\PP^1\times\PP^1.
\]
The areas are uniformly bounded, and in the affine coordinates
$([w:1],[v:1])$ the complete first jet is fixed:
\[
 \widehat f_j(0)=([0:1],[0:1]),\qquad (\widehat f_j)_z(0)=(0,1).
\]
Away from zero, the principal limit is
\[
 \widehat f_\infty(z)=([1:0],[z:1]).
\]
Its extension has value $([1:0],[0:1])$ at zero, different from the
marked value. Rescaling by $z=\zeta/\sqrt j$ instead gives
\[
 \widehat f_j(\zeta/\sqrt j)
 =\bigl([\zeta^2:1],[\zeta/\sqrt j:1]\bigr)
 \longrightarrow\bigl([\zeta^2:1],[0:1]\bigr).
\]
The prescribed second-factor direction remains on the residual principal
component. Correspondingly,
$B_1(\widehat f_j)/|(\widehat f_j)_z(0)|_\omega\to\infty$.

Marked and nodal values retain incidence.
Lemma~\ref{lem:two-marked-expanding-chain} gives a chain joining two
compact target sets with controlled total area. It supplies the
compactness alternative in Section~\ref{sec:spread-sphere-quotient} and
the final passage to the explicit area bound.

\begin{proof}[Proof of Lemma~\ref{lem:two-marked-expanding-chain}]
First select a subsequence with
$\Area_{R_j}(f_j)\to A_\infty=\liminf_j\Area_{R_j}(f_j)$.
Proposition~\ref{prop:principal-compactness} gives an entire principal
map $f_\infty$ and a finite concentration set $\Sigma_\infty$ on a
further subsequence.

Choose $1<r<\rho<r'$ with $\Sigma_\infty\subset\D_r$.
To retain the source coordinate in the limit, apply compactness to the
graphs
\[
 \widehat f_j(z)=(z,f_j(z)):\D_\rho\longrightarrow\PP^1\times X.
\]
The product Hermitian metric gives a uniform area bound: the first
factor adds only the fixed Fubini--Study area of $\D_\rho$. The maps
are defined on $\D_{r'}$ for large $j$ and converge smoothly on a
collar of $\partial\D_\rho$. Since the source complex structures are
fixed, Theorem~1 of Ivashkovich and
Shevchishin~\cite{IvashkovichShevchishin2000} applies, and its
boundary-collar conclusion allows the parameterizations to agree with
the original ones on a smaller collar.

Let $\sigma_j:\Sigma\to\overline\D_\rho$ and
$\sigma_\infty:\Sigma\to C_\infty$ be the parameterizations in
Definition~2.5, and write
$(p,v):C_\infty\to\PP^1\times X$ for the limiting nodal map.
For each $a\in\{0,1\}$, choose $\xi_{j,a}\in\Sigma$ such that
$\sigma_j(\xi_{j,a})=a$. After passing to a further subsequence,
$\xi_{j,a}\to\xi_a$ for $a=0,1$; set
$q_a=\sigma_\infty(\xi_a)$. The uniform convergence in
Definition~2.5 gives
\[
 (p,v)(q_a)
 =\lim_{j\to\infty}
   (\widehat f_j\circ\sigma_j)(\xi_{j,a})
 =\lim_{j\to\infty}(a,f_j(a))
 \in\{a\}\times K_a.
\]
Thus $p(q_a)=a$ and $v(q_a)\in K_a$ for $a=0,1$.

The genus-zero domain has one bordered root $C_0$ and a finite tree
of spheres. Since $p$ takes values in
$\overline\D_\rho\subset\C$, it is constant on every sphere.
The unchanged outer collar gives $p(z)=z$ there. The argument principle
on $C_0$ consequently gives exactly one preimage, counted with
multiplicity, for every point of $\D_\rho$. Thus
$p:C_0\to\D_\rho$ is biholomorphic. In this coordinate, convergence
off $\Sigma_\infty$ identifies $v|_{C_0}$ with
$f_\infty|_{\D_\rho}$; holomorphic uniqueness extends the identification
across the omitted points.

Choose compact regions in the interior of the root and in each
nonconstant sphere, avoiding the nodes. A single stable
parametrization gives orientation-preserving embeddings of these
regions into $\D_\rho$ with pairwise disjoint images. Uniform and
strong $W^{1,2}$ convergence give convergence of their
$\omega$-integrals.
For any fixed $L>\rho$, these regions are also disjoint from the
annulus $\D_L\setminus\overline\D_\rho$, where the original maps
converge smoothly to $f_\infty$, since $\Sigma_\infty\subset\D_r$.
All these regions lie in $\D_{R_j}$ for large $j$, so their limiting
areas sum to at most $A_\infty$. Exhaust the root interior and the
spheres away from the nodes, then let $L\to\infty$. This gives
\begin{equation}\label{eq:global-root-component-area}
 \Area(f_\infty)+\sum_\alpha\Area(u_\alpha)\le A_\infty,
\end{equation}
where $u_\alpha$ are the nonconstant sphere maps.
Only the original form $\omega$ is integrated, with mapping
multiplicities; the auxiliary Fubini--Study area does not enter.

Theorem~\ref{thm:removal} extends $f_\infty$ across infinity.
Replace the root by this sphere. For each $a\in\{0,1\}$, choose a
component containing $q_a$, and take the path between these two
components in the dual tree. Its endpoint images meet $K_0$ and $K_1$,
respectively. Omit constant components along the path; equality at the
nodes preserves all intersections and endpoint incidences. At least
one nonconstant component remains because $K_0\cap K_1=\varnothing$.
Discarding the other components preserves the upper area bound and
gives the required chain.
\end{proof}

\subsection{Rescaling and finite-area weighted derivatives}
\label{subsec:weighted-derivative-proof}

Lemma~\ref{lem:pointselect} is the standard affine form of Brody's
reparametrization lemma~\cite[Lemma~2.1]{Brody1978}; see
also~\cite[Section~1]{Duval2021}. We include the proof to record the
normalization and the finite-area estimate used in the recovery argument.
For a single finite-area disk, a local small-area estimate proves
Lemma~\ref{lem:finite-area-weighted-bound} directly.

\begin{lemma}\label{lem:pointselect}
Let $(X,\omega)$ be a compact Hermitian manifold, and let $R_j>0$.
Let $f_j:U_j\to X$ be holomorphic maps, where $U_j\subset\C$ is open and
$\overline\D_{R_j}\Subset U_j$, and suppose
$B_{R_j}(f_j)\to\infty$.  Then after affine source rescaling, a subsequence
converges locally smoothly on $\C$ to a nonconstant entire map
$u:\C\to X$.  If in addition $\sup_j\Area_{R_j}(f_j)<\infty$, then
\[
 \Area(u)\leq\liminf_j\Area_{R_j}(f_j).
\]
\end{lemma}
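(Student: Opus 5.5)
The proof will follow Brody's classical point selection, with the weighted derivative $B_R$ doing the work. First, since $\overline\D_{R_j}\Subset U_j$, each $f_j$ is smooth on the closed disk. The function $z\mapsto(R_j-|z|)|(f_j)_z(z)|_\omega$ is therefore continuous on $\overline\D_{R_j}$ and vanishes on $\partial\D_{R_j}$, so it attains its supremum $B_j\coloneqq B_{R_j}(f_j)$ at some interior point $w_j$. Put $d_j=R_j-|w_j|$ and $\Lambda_j=|(f_j)_z(w_j)|_\omega$. Then $d_j\Lambda_j=B_j\to\infty$, and $\Lambda_j>0$ for large $j$.

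Define the rescaled maps $u_j(\zeta)=f_j(w_j+\zeta/\Lambda_j)$ on $|\zeta|<d_j\Lambda_j=B_j$. They satisfy $|(u_j)_\zeta(0)|_\omega=1$. For $|\zeta|\le S<B_j$, the point $z=w_j+\zeta/\Lambda_j$ has $R_j-|z|\ge d_j-S/\Lambda_j$. Maximality at $w_j$ then gives
\[
 |(u_j)_\zeta(\zeta)|_\omega=\Lambda_j^{-1}|(f_j)_z(z)|_\omega
 \le\frac{d_j}{d_j-S/\Lambda_j}=\frac{B_j}{B_j-S}\longrightarrow1 .
\]
Thus on every fixed disk $\D_S$ the $u_j$ are eventually uniformly Lipschitz into the compact target. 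By Arzel\`a--Ascoli and a diagonal argument over $S\in\mathbb N$, a subsequence converges locally uniformly on $\C$ to a continuous map $u$. To upgrade this to smooth convergence and holomorphy of $u$, I will argue locally: uniform convergence lets small source disks map into a fixed target coordinate chart for all large $j$. There the coordinate functions are bounded holomorphic functions, and Cauchy estimates give convergence of all derivatives. In particular $|u_\zeta(0)|_\omega=\lim_j|(u_j)_\zeta(0)|_\omega=1$, so $u$ is a nonconstant entire map with $|u_\zeta|_\omega\le1$ everywhere.

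For the area statement, fix $S>0$. For large $j$ the disk $\D_{S/\Lambda_j}(w_j)$ lies in $\D_{R_j}$, since $S/\Lambda_j<d_j$. The change of variables $z=w_j+\zeta/\Lambda_j$ preserves the pulled-back form, so
\[
 \Area_S(u_j)=\int_{\D_{S/\Lambda_j}(w_j)}f_j^*\omega\le\Area_{R_j}(f_j).
\]
Smooth convergence on $\overline\D_S$ yields $\Area_S(u)=\lim_j\Area_S(u_j)\le\liminf_j\Area_{R_j}(f_j)$, where the lower limit is taken along the chosen subsequence and is bounded by that of any further subsequence. If necessary, I will first pass to a subsequence realizing the original $\liminf$, before the Arzel\`a--Ascoli extraction, so that the bound is the stated one. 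Letting $S\to\infty$ then gives $\Area(u)\le\liminf_j\Area_{R_j}(f_j)$ by monotone convergence.

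The main care point is this ordering of subsequences, together with checking that the maximum is attained in the interior; both are mild. The only genuinely analytic step is the passage from uniform $C^0$ convergence to smooth convergence. In the Hermitian but compact target this is the chart-plus-Cauchy-estimates argument, and it needs no small-energy theory because the derivative bound is already uniform.
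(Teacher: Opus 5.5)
Your proposal is correct and follows essentially the same route as the paper: an interior maximizer $w_j$ of the weighted derivative, rescaling by $\Lambda_j$, equicontinuity plus Cauchy estimates in target charts, and the area bound by conformal invariance after first passing to a subsequence that realizes the $\liminf$. The only difference is cosmetic: you use the sharper bound $B_j/(B_j-S)$, which gives $|u_\zeta|_\omega\le1$, where the paper uses the factor $2$ on the half-radius disks $|z-w_j|<d_j/2$.
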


\begin{proof}
Under the additional area hypothesis, first pass to a subsequence for which
$\Area_{R_j}(f_j)$ converges to the lower limit of the original area sequence.
Because $f_j$ extends holomorphically past $\overline\D_{R_j}$, the
function
\[
 z\longmapsto (R_j-|z|)|(f_j)_z(z)|_\omega
\]
extends continuously to $\overline\D_{R_j}$ by taking the value zero
on the boundary. It therefore attains its maximum. For all large $j$
this maximum is positive, so a maximizing point $w_j$ lies in
$\D_{R_j}$. Set
$d_j\coloneqq R_j-|w_j|$ and
$\Lambda_j\coloneqq |(f_j)_z(w_j)|_\omega$.  Then
\[
 d_j\Lambda_j=B_{R_j}(f_j)\longrightarrow\infty,\qquad
 |(f_j)_z(z)|\leq2\Lambda_j\quad(|z-w_j|<d_j/2).
\]
Indeed, $|z-w_j|<d_j/2$ implies $R_j-|z|>d_j/2$.
Maximality then gives
$(d_j/2)|(f_j)_z(z)|\leq d_j\Lambda_j$, which is the second bound.
The rescaled map
\[
 u_j(\zeta)\coloneqq f_j(w_j+\zeta/\Lambda_j),
 \qquad |\zeta|<\frac12d_j\Lambda_j,
\]
satisfies $|(u_j)_\zeta|_\omega\leq2$ and
$|(u_j)_\zeta(0)|_\omega=1$. The source radii
$d_j\Lambda_j/2$ tend to infinity. On every fixed compact source disk,
the derivative bound gives equicontinuity, and compactness of $X$
gives a uniformly convergent subsequence. A diagonal selection and
Cauchy estimates in local target charts give a smooth limit
$u:\C\to X$ with $|u_\zeta(0)|_\omega=1$; hence $u$ is nonconstant.
Under the additional area hypothesis, conformal invariance gives, for
each fixed $S>0$,
\[
 \Area_S(u)=\lim_j\Area_S(u_j)\leq\lim_j\Area_{R_j}(f_j)
 =\liminf_j\Area_{R_j}(f_j).
\]
Letting $S\to\infty$ proves the area bound.
\end{proof}

\begin{proof}[Proof of Lemma~\ref{lem:finite-area-weighted-bound}]
There are constants $\epsilon,c>0$, depending only on $(X,\omega)$,
such that every holomorphic $u:\D\to X$ of area less than $\epsilon$
satisfies $|u_z(0)|_\omega^2\leq c\Area_1(u)$.
Indeed, Lemma~1.1 of Ivashkovich and
Shevchishin~\cite{IvashkovichShevchishin2000}, applied with $p=4$
and the energy convention
\eqref{eq:compactness-energy-normalization}, bounds
$\|du\|_{L^4(\D_{1/2})}$ by a constant times
$\Area_1(u)^{1/2}$. Sobolev--Morrey, applied
after a fixed smooth embedding of $X$ into Euclidean space, then
gives a small image diameter. After decreasing $\epsilon$, the image
of $\D_{1/2}$ lies in one uniformly
controlled holomorphic chart. The submean inequality for its
holomorphic coordinate derivatives gives the stated estimate.

Choose $r<R$ so that the area of $h$ on
$\{r<|w|<R\}$ is less than $\epsilon$.
For $|z|>(2r+R)/3$, the disk of radius
$a=(R-|z|)/2$ about $z$ lies in this annulus. Dilation of the
local estimate gives
\[
 (R-|z|)^2|h_z(z)|_\omega^2
 \leq4c\int_{\D_a(z)}h^*\omega\leq4c\epsilon.
\]
On the remaining compact subdisk, $h_z$ is bounded.
Thus $B_R(h)<\infty$.
\end{proof}

\section{Applications}
\label{sec:applications}

The relative theorem recovers the rational connectedness of Fano
manifolds \cite{Campana1992,KMMFano1992}: two general points can be
joined by a rational curve. Choose a positive-Ricci metric as
in Proposition~\ref{prop:fano-metric}. For a quasifibration
$q:X\dashrightarrow Y$ in Campana's sense~\cite[0.7]{Campana1992}, with
$\dim Y>0$, restrict to the dense open subsets on which it is a proper
morphism with connected fibers. Generic smoothness~\cite[III, Corollary~10.7]{HartshorneBook} makes its general fiber
$F$ smooth and compact. Theorem~\ref{thm:bounded-fiber-escape} gives a
rational curve meeting $F$ but not contained in it. Campana's criterion~\cite[Proposition~2.7]{Campana1992} gives rational chain connectedness.
For smooth projective varieties over $\C$, this implies rational
connectedness~\cite{AraujoKollar2002}.

The pointwise construction also recovers Mori's characterization of
projective space by an ample tangent bundle~\cite{Mori1979}; for an
earlier characteristic-zero proof, see Peternell~\cite[p.~99 and Section~2]{Peternell1996}.
Let $X$ be a connected smooth complex projective $n$-fold with $T_X$
ample. The case $n=1$ is immediate. For $n\ge2$, the determinant
$K_X^{-1}=\det T_X$ is ample~\cite{Hartshorne1966}, so $X$ is Fano and
Theorem~\ref{thm:intro-pointwise-fano} supplies a rational curve.
Starting from this curve, two-point degree reduction gives a rational
curve $C$ with
$-K_X\cdot C\leq n+1$~\cite[Section~5, Theorem~3, first step]{Demazure1981}.
Let $\nu:\PP^1\to C\subset X$ be its normalization. Since $T_X$ is
ample,
\[
 \nu^*T_X\simeq\bigoplus_{i=1}^n\cO_{\PP^1}(a_i),
 \qquad a_i\geq1.
\]
The nonzero differential
\[
 d\nu:\cO_{\PP^1}(2)\longrightarrow \nu^*T_X
\]
forces some $a_i\geq2$. Hence
\[
 -K_X\cdot C=\deg\nu^*T_X=\sum_{i=1}^n a_i\geq n+1.
\]
Thus equality holds. The resulting minimal-curve properties are those
in~\cite[Section~5, Corollary~(a),(c)]{Demazure1981}.
Mori's classification using the resulting family gives $X\simeq\PP^n$~\cite{Mori1979}; see also~\cite[Section~6]{Demazure1981}.
These steps take place over $\C$; the analytic construction supplies
the initial rational curve.

\section*{Acknowledgments}
We are grateful to Franc Forstneri\v{c} for introducing us to Oka geometry
and for his encouragement. We also thank Jun-Muk Hwang and
Mihai P\u{a}un for their helpful suggestions.

\section*{Funding}

S.-Y.~Xie was partially supported by the National Key R\&D Program of China (grant nos.~2023YFA1010500 and 2021YFA1003100), the National Natural Science Foundation of China (grant nos.~12288201 and 12471081), and the Xiaomi Young Talents Program.

\section*{AI use disclosure}
The mathematical ideas and overall approach are due to the authors. AI tools
assisted with exploratory work and language editing during the development
of the proofs. The authors reviewed and approved every mathematical statement
and take full responsibility for the paper's content.

\end{document}